\def\version{{\tiny Version \today (typeset: \today)}}
\def\version{}
\DeclareFontFamily{OT1}{eusb}{} \DeclareFontShape{OT1}{eusb}{m}{n} {<5> <6> <7> <8> <9> <10> <11> <12> <14.4> eusb10}{}
\DeclareMathAlphabet{\eusb}{OT1}{eusb}{m}{n}
\DeclareFontFamily{OT1}{eusm}{} \DeclareFontShape{OT1}{eusm}{m}{n} {<5> <6> <7> <8> <9> <10> <11> <12> <14.4> eusm10}{}
\DeclareMathAlphabet{\eusm}{OT1}{eusm}{m}{n}
\DeclareFontFamily{OT1}{eufm}{} \DeclareFontShape{OT1}{eufm}{m}{n} {<5> <6> <7> <8> <9> <10> <11> <12> <14.4> eufm10}{}
\DeclareMathAlphabet{\mathfrak}{OT1}{eufm}{m}{n}
\DeclareFontFamily{OT1}{fraktura}{}
\DeclareFontShape{OT1}{fraktura}{m}{n} {<5> <6> <7> <8> <9> <10> <11> <12> <13> <14.4> [1.1] eufm10}{}
\DeclareMathAlphabet{\fraktura}{OT1}{fraktura}{m}{n}
\DeclareFontFamily{OT1}{cmfi}{} \DeclareFontShape{OT1}{cmfi}{m}{n} {<5> <6> <7> <8> <9> <10> <11> <12> <13> <14.4> [0.9] cmfi10}{}
\DeclareMathAlphabet{\cmfi}{OT1}{cmfi}{b}{n}
\DeclareFontFamily{OT1}{cmss}{} \DeclareFontShape{OT1}{cmss}{m}{n} {<5> <6> <7> <8> <9> <10> <11> <12> <13> <14.4> cmss10}{}
\DeclareMathAlphabet{\cmss}{OT1}{cmss}{m}{n}
\newtheoremstyle{thm}{1.5ex}{1.5ex}{\itshape\rmfamily}{} {\bfseries\rmfamily}{}{2ex}{}
\newtheoremstyle{def}{1.5ex}{1.5ex}{\rmfamily\sl}{} {\bfseries\rmfamily}{}{2ex}{}
\newtheoremstyle{rem}{1.3ex}{1.3ex}{\rmfamily}{} {\bfseries\rmfamily}{}{2ex}{}
\newtheoremstyle{ass}{1.5ex}{1.5ex}{\rmfamily\sl}{} {\bfseries\rmfamily}{}{2ex}{}
\newenvironment{proofsect}[1] {\vskip0.1cm\noindent{\rmfamily\itshape#1.}}{\qed\vspace{0.15cm}}
\theoremstyle{thm}
\newtheorem{theorem}{Theorem}[section]
\newtheorem{lemma}[theorem]{Lemma}
\newtheorem{proposition}[theorem]{Proposition}
\newtheorem*{Main Theorem}{Main Theorem.}
\newtheoremstyle{named}{}{}{\itshape}{}{\bfseries}{}{.5em}{\thmnote{#3}}
\theoremstyle{named}
\theoremstyle{def}
\newtheorem{definition}[theorem]{Definition}
\theoremstyle{rem}
\newtheorem{remark}[theorem]{{Remark}}
\numberwithin{equation}{section}
\renewcommand{\section}{\secdef\sct\sect}
\newcommand{\sct}[2][default]{\refstepcounter{section}
\addcontentsline{toc}{section}
{{\tocsection {}{\thesection}{\!\!\!\!#1\dotfill}}{}}
\vspace{0.7cm}
\centerline{ 
\scshape\arabic{section}.\ #1} \nopagebreak \vspace{0.2cm}}
\newcommand{\sect}[1]{
\vspace{0.4cm} \centerline{\large\scshape\rmfamily #1}
\vspace{0.2cm}}
\renewcommand{\subsection}{\secdef\subsct\sbsect}
\newcommand{\subsct}[2][default]{\refstepcounter{subsection}
\addcontentsline{toc}{subsection}
{{\tocsection{\!\!}{\hspace{1.2em}\thesubsection}{\!\!\!\!#1\dotfill}}{}}
\nopagebreak\vspace{0.45\baselineskip} {\flushleft\bf
\arabic{section}.\arabic{subsection}~\bf #1.~}
\\*[3mm]\noindent
\nopagebreak}
\newcommand{\sbsect}[1]{
\vspace{0.1cm}\noindent
\textbf{#1.~}\vspace{0.1cm}}
\renewcommand{\subsubsection}{%
\secdef \subsubsect\sbsbsect}
\newcommand{\subsubsect}[2][default]{%
\refstepcounter{subsubsection} 
\addcontentsline{toc}{subsubsection}{{\tocsection{\!\!}
{\hspace{3.05em}\thesubsubsection}{\!\!\!\!#1\dotfill}}{}}
\nopagebreak
\vspace{0.15\baselineskip} \nopagebreak {\flushleft\rmfamily
\itshape\arabic{section}.\arabic{subsection}.\arabic{subsubsection}
\ \rmfamily #1\/.}\ }
\newcommand{\sbsbsect}[1]{\vspace{0.1cm}\noindent
\rmfamily \itshape
\arabic{section}.\arabic{subsection}.\arabic{subsubsection} \
\sffamily #1\/.\ }
\renewcommand{\caption}[1]{%
\vglue0.5cm
\refstepcounter{figure}
\begin{minipage}{0.9\textwidth}\small {\sc Figure~\thefigure. }#1\end{minipage}}
\newcommand{\dist}{\operatorname{dist}}
\newcommand{\supp}{\operatorname{supp}}
\newcommand{\textd}{\text{\rm d}\mkern0.5mu}
\newcommand{\mini}{{\text{\rm min}}}
\newcommand{\maxi}{{\text{\rm max}}}
\newcommand{\BH}{{\mathbb{H}}}
\newcommand{\BR}{{\mathbb{R}}}
\newcommand{\BZ}{{\mathbb{Z}}}
\newcommand{\CF}{{\mathcal{F}}}
\newcommand{\CM}{{\mathcal{M}}}
\newcommand{\CP}{{\mathcal{P}}}
\newcommand{\CU}{{\mathcal{U}}}
\newcommand{\bae}{\begin{equation}\begin{aligned}}
\newcommand{\eae}{\end{aligned}\end{equation}}
\DeclareFontFamily{OML}{rsfs}{\skewchar\font'177}
\DeclareFontShape{OML}{rsfs}{m}{n}{ <5> <6> rsfs5 <7> <8> <9>
rsfs7 <10> <10.95> <12> <14.4> <17.28> <20.74> <24.88> rsfs10 }{}
\DeclareMathAlphabet{\mathfs}{OML}{rsfs}{m}{n}
\newcommand{\cc}{{\text{\rm c}}}
\def\myffrac#1#2 in #3{\raise 2.6pt\hbox{$#3 #1$}\mkern-1.5mu\raise 0.8pt\hbox{$#3/$}\mkern-1.1mu\lower 1.5pt\hbox{$#3 #2$}}
\newcommand{\wh}{\widehat}
\newcommand{\distH}{{\text{\rm dist}}_{\text{\rm H}}}
\begin{document}

\title[Eigenvalues under perimeter constraint\hfill  \version \hfill]{\large 
Shapes of drums with lowest base frequency\\under non-isotropic perimeter constraints}

\author[\hfill  \version \hfill Biskup and Procaccia]
{Marek~Biskup$^1$ and Eviatar B.\ Procaccia$^2$}
\thanks{\hglue-4.5mm\fontsize{9.6}{9.6}\selectfont\copyright\,\textrm{2016}\ \textrm{M.~Biskup, E.B.~Procaccia.
Reproduction, by any means, of the entire
article for non-commercial purposes is permitted without charge.\vspace{2mm}}}
\maketitle

\vspace{-4mm}
\centerline{$^1$\textit{Department of Mathematics, UCLA, Los Angeles, California, USA}}
\centerline{$^2$\textit{Department of Mathematics, Texas A\&M University, College Station, Texas, USA}}

\begin{abstract}
We study the minimizers of the sum of the principal Dirichlet eigenvalue of the negative Laplacian and the perimeter with respect to a general norm in the class of Jordan domains in the plane. This is equivalent (modulo scaling) to minimizing the said eigenvalue (or the base frequency of a drum of this shape) subject to a hard constraint on the perimeter. We show that, for all norms, a minimizer exists, is unique up to spatial translations and is convex but not necessarily smooth. We give conditions on the norm that characterize the appearance of facets and corners. We also demonstrate that near minimizers have to be close to the optimal ones in the Hausdorff distance. Our motivation for considering this class of variational problems comes from a study of random walks in random environment interacting through the boundary of their~support. 
\end{abstract}
\maketitle

\vglue3mm

\section{Main results}
\noindent
The aim of this paper is to analyze the planar domains that minimize the principal harmonic frequency (i.e., the lowest Dirichlet eigenvalue of the negative Laplacian or the base frequency of the drum of this shape) subject to a hard constraint on the perimeter defined using a general norm. Let us introduce the problem and state our results on it first and leave the discussion of our motivation as well as connections to existing literature to a later section. 

\subsection{The model and main theorems}
We will consider the following class of admissible domains
\begin{equation}
\label{E:1.11}
\CU:=\bigl\{U\subset\BR^2\colon\text{Jordan domain}, 0\in U
\bigr\}.
\end{equation}
Note that each~$U\in\CU$ is open, bounded, connected and simply connected with~$\partial U$ being the trace of a simple closed curve.
The principal Dirichlet eigenvalue of the negative Laplacian is for any open $U\subset\BR^2$ given by\begin{equation}
\label{E:1.4}
\lambda(U):=\inf\Bigl\{\Vert\nabla g\Vert_2^2\colon g\in C^\infty(\BR^2),\,\supp(g)\subset U,
\Vert g\Vert_2=1\Bigr\},
\end{equation}
where $\Vert\cdot\Vert_2$ denotes the $L^2$ norm on~$\BR^2$ with respect to the Lebesgue measure.
Next fix a norm~$\rho$ on~$\BR^2$ and denote by $\gamma\colon[0,1]\to\BR^2$ the curve constituting $\partial U$ (where $\gamma(0)=\gamma(1)$). Then we define the $\rho$-perimeter~$\CP(U)$ of~$U\in\CU$ by
\begin{equation}
\label{E:1.12}
\mathfs{P}(U):=\,\sup_{n\ge1}\,\sup_{\begin{subarray}{c}
t_0,\dots,t_n\in[0,1]\\0= t_0<t_1<\ldots<t_n=1
\end{subarray}}\,
\,\sum_{i=1}^n\rho\bigl(\gamma(t_i)-\gamma(t_{i-1})\bigr).
\end{equation}
Our main point of interest are the minimizers of~$\lambda(U)$ subject to the constraint~$\CP(U)=a$.

Instead of dealing with the hard constraint, throughout this paper we will focus on minimizing the functional
\begin{equation}
\label{E:1.13}
\CF(U):=\lambda(U)+\mathfs{P}(U)
\end{equation}
over all sets~$U\in\CU$. Our main result concerning this problem is as follows:

\begin{theorem}[Existence and uniqueness of minimizers]
\label{thm-0}
For any choice of the norm~$\rho$, the functional $U\mapsto\CF(U)$ achieves its minimum on~$\CU$. Moreover, the minimizer is unique modulo translates: There is $U_0\in\CU$ such that
\begin{equation}
\label{E:1.14}
\CM:=\bigl\{U\in\CU\colon \CF(U)=\min\CF\bigr\}=\bigl\{x+U_0\colon -x\in U_0\bigr\}.
\end{equation}
The set $U_0$ is convex and has all symmetries of~$\BR^2$ that preserve~$\rho$.
\end{theorem}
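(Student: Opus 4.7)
The plan is to first reduce the problem to convex admissible sets, then prove existence of a minimizer by a compactness and lower-semicontinuity argument, and finally deduce uniqueness from the Brascamp--Lieb concavity of $\lambda^{-1/2}$ paired with the Minkowski linearity of the anisotropic perimeter. The symmetry claim will then fall out for free by applying uniqueness to a canonical representative of $\CM$.

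First I would observe that replacing any $U\in\CU$ by its (open) convex hull cannot increase $\CF$: domain monotonicity gives $\lambda(\hull U)\le\lambda(U)$, while applying the triangle inequality for $\rho$ to each arc of $\partial U$ that is replaced by a supporting chord of $\hull U$ gives $\CP(\hull U)\le\CP(U)$. Hence one may look for a minimizer among convex sets. For existence, take a minimizing sequence $\{U_n\}\subset\CU$ of convex sets. The bound on $\CP(U_n)$ forces a uniform Euclidean diameter bound (norms on $\BR^2$ being equivalent), while the Faber--Krahn inequality $\lambda(U)\ge\pi j_0^2/|U|$ combined with the bound on $\lambda(U_n)$ keeps $|U_n|$ bounded away from zero. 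Since $0\in U_n$, the whole sequence sits in a common compact set, and Blaschke's selection theorem extracts a Hausdorff-convergent subsequence with convex limit $U_\infty$ of positive area, hence $U_\infty\in\CU$. Lower semicontinuity of $\CP$ is immediate from its supremum definition, and continuity of $\lambda$ along Hausdorff-convergent sequences of uniformly thick planar convex sets is standard; so $U_\infty$ is a minimizer.

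For uniqueness, let $U_1,U_2\in\CM$ be two convex minimizers and set $U_t:=(1-t)U_1+tU_2$, which again lies in $\CU$. Writing $\CP(U)$ as a mixed-volume-type functional in the support function $h_U$ gives $\CP(U_t)=(1-t)\CP(U_1)+t\CP(U_2)$. The Brascamp--Lieb theorem says that $U\mapsto\lambda(U)^{-1/2}$ is concave along Minkowski interpolations of convex bodies, with equality only on homothetic pairs; chaining with Jensen's inequality for $x\mapsto x^{-2}$ yields
\begin{equation*}
\lambda(U_t)\,\le\,\bigl[(1-t)\lambda(U_1)^{-1/2}+t\lambda(U_2)^{-1/2}\bigr]^{-2}\,\le\,(1-t)\lambda(U_1)+t\lambda(U_2).
\end{equation*}
Adding the identity for $\CP$ and using minimality of $U_1,U_2$ forces both inequalities to be equalities, so $U_1,U_2$ are homothetic and have a common value of $\lambda$; since $\lambda(cU)=c^{-2}\lambda(U)$, the scaling factor must equal $1$, and $U_1$ and $U_2$ differ only by a translation.

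Finally, if $R$ is any linear isometry of $\BR^2$ that preserves $\rho$, then $RU_0$ is again a minimizer and by the above it is a translate of $U_0$. Selecting $U_0$ as the unique representative of $\CM$ whose centroid lies at the origin then forces $RU_0=U_0$, which gives the claimed symmetry. The main technical obstacles I foresee are a careful verification of the equality case in Brascamp--Lieb, the Hausdorff continuity of $\lambda$ for planar convex domains with merely Lipschitz boundary, and the short density argument needed to propagate Minkowski linearity of $\CP$ from smooth convex bodies to arbitrary convex members of $\CU$.
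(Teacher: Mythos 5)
Your outline follows the paper's proof strategy quite closely: reduce to convex sets via the convex hull, establish existence by Blaschke selection together with the \emph{a priori} bounds and lower semicontinuity, prove uniqueness by convexity along Minkowski interpolation, and deduce the symmetry claim from uniqueness. Your uniqueness step differs only cosmetically: the paper extracts the convexity of $\lambda$ (and its strict version on non-homothetic pairs) directly from Colesanti's display (45), whereas you obtain the same inequality by chaining the Brascamp--Lieb/Colesanti concavity of $\lambda^{-1/2}$ with Jensen's inequality for $x\mapsto x^{-2}$ and use the two equality cases to get the homothety ratio $1$ in a single stroke. Your route to Minkowski-linearity of $\CP$ via support functions is also a cleaner alternative to the paper's polygonal-approximation argument (Lemma~\ref{lem:parconv}). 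These are fine variations of the same idea.

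There is, however, a genuine gap: your argument establishes that the \emph{convex} minimizers form a single translation orbit, but the theorem asserts that $\CM$, taken over \emph{all} Jordan domains, equals $\{x+U_0\colon -x\in U_0\}$. Your reduction step only yields the non-strict inequality $\lambda(\hull U)\le\lambda(U)$ from domain monotonicity, which shows that passing to the convex hull cannot \emph{increase} $\CF$ --- enough for existence of a convex minimizer, but not enough to exclude a non-convex $U$ with $\CF(U)=\min\CF$. What is needed, and what the paper supplies in Lemma~\ref{lemma-hull}, is the \emph{strict} inequality $\lambda(\hull U)<\lambda(U)$ whenever $U$ is simply connected and $U\ne\hull U$; this uses the fact (special to $d\le2$) that a connected set of positive diameter has positive capacity, so that enlarging the domain by such a set strictly decreases the principal eigenvalue. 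Without this step, your proof leaves open the possibility of non-convex members of $\CM$, and the identification $\CM=\{x+U_0\colon -x\in U_0\}$ is not fully justified.

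A smaller point: you assert that lower semicontinuity of $\CP$ along a Hausdorff-convergent sequence of convex sets is ``immediate from its supremum definition.'' It is true but not quite immediate --- one must fix a common parametrization (the paper uses the polar angle seen from a point in the kernel) to ensure that the sample points $\gamma_n(t_i)$ actually converge to $\gamma(t_i)$. This is easily repaired and I would not count it as a gap, but it deserves a sentence.
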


\noindent
To see that this does resolve also the problem with the hard constraint on the perimeter we note that the sheer existence of a minimizer~$U_0$ of~$\CF$ implies
\begin{equation}
\inf\bigl\{\lambda(U)\colon\CP(U)=\CP(U_0)\bigr\}=\lambda(U_0).
\end{equation}
In light of \eqref{E:1.14} and the scaling relations
\begin{equation}
\label{E:scaling}
\lambda(tU)=t^{-2}\lambda(U)
\quad\text{and}\quad
\CP(tU)=t\CP(U),
\end{equation}
where $tU:=\{tx\colon x\in U\}$, every minimizer of~$\lambda(U)$ subject to~$\CP(U)=a$ is then a translate of~$tU_0$ where $t:=a/\CP(U_0)$.

A natural next problem is the determination of the shape~$U_0$ for specific choices of~$\rho$. Here are three examples where~$U_0$ can explicitly be found:
\begin{enumerate}
\item[(1)] $\rho$ := Euclidean norm $\Rightarrow$ $U_0$ = a Euclidean ball,
\item[(2)] $\rho$ := $\ell^1$-norm $\Rightarrow$ $U_0$ = an $\ell^\infty$-ball,
\item[(3)] $\rho$ := $\ell^\infty$-norm $\Rightarrow$ $U_0$ = an $\ell^1$-ball,
\end{enumerate}
see Remark~\ref{rem1} for a justification.

Recall that non-empty open sets $U,V\subset\BR^2$ are said to be \emph{homothetic} if we have $U=x+tV$ for some $x\in\BR^2$ and some $t>0$ and let $|U|$ denote the Lebesgue measure of~$U$. Comparing (1-3) above with the \emph{isoperimetric problem} for the perimeter defined using the norm~$\rho$,
\begin{equation}
\label{E:1.7w}
\inf\bigl\{\CP(U)\colon U\in\CU,\,|U|=1\bigr\},
\end{equation}
which is solved (see Taylor~\cite{Taylor1974,Taylor1975}) by translates of $t\widetilde U_0$ where
\begin{equation}
\label{E:wulff}
\widetilde U_0:=\bigcap_{y\in\BR^2}\bigl\{x\in\BR^2\colon x\cdot y<\rho(y)\bigr\}
\end{equation}
and $t$ is such that $|\widetilde U_0|=1$, it may come as a surprise that in each of the three cases above the minimizers of \eqref{E:1.13} are homothetic to the  minimizers of \eqref{E:1.7w}. However, the above examples are very special and the connection fails in general:

\begin{theorem}
\label{prop-1.2}
There exists a norm~$\rho$ on~$\BR^2$ such that the minimizer~$U_0$ is not homothetic to the (unique) minimizer of~\eqref{E:1.7w}. In fact, if the norm is smooth outside the origin and~$\partial U_0$ is $C^2$, then the minimizers of \eqref{E:1.13} and \eqref{E:1.7w} are homothetic if and only  if~$U_0$ is a Euclidean ball and~$\rho$ is proportional to the Euclidean norm.
\end{theorem}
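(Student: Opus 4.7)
The plan is to extract from the minimization of $\CF$ an overdetermined boundary condition for the principal eigenfunction on $\partial U_0$, and then invoke Serrin's classical rigidity theorem. The existence assertion follows from the ``in fact'' statement: take any norm $\rho$ that is smooth and strictly convex outside the origin but not proportional to the Euclidean norm. Its Wulff shape $\wt U_0$ (the unique minimizer of \eqref{E:1.7w} up to translation/scaling) then has $C^2$ boundary. If the $\CF$-minimizer $U_0$ were homothetic to $\wt U_0$, its boundary would inherit $C^2$ regularity, and the ``in fact'' statement would force $\rho$ to be proportional to the Euclidean norm --- a contradiction. Thus it suffices to prove the second assertion.

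Assume $\rho$ is smooth outside the origin and $\partial U_0$ is $C^2$. Let $\phi\in C^2(\ol{U_0})$ denote the $L^2$-normalized positive principal eigenfunction, so $-\Delta\phi=\lambda(U_0)\phi$ in $U_0$ and $\phi=0$ on $\partial U_0$. I compute the first variation of $\CF$ along a compactly supported normal deformation of $\partial U_0$ with outward signed velocity $h$. The Hadamard formula gives
\begin{equation*}
\delta\lambda(U_0)[h] \,=\, -\int_{\partial U_0}(\partial_\nu\phi)^2\, h\, d\CH^1,
\end{equation*}
while, under the stated regularity, the $\rho$-perimeter reduces to $\mathfs{P}(U_0)=\int_{\partial U_0}\rho(T)\, d\CH^1$ with $T$ the unit tangent, and its first variation is
\begin{equation*}
\delta\mathfs{P}(U_0)[h] \,=\, \int_{\partial U_0}\kappa_\rho\, h\, d\CH^1,
\end{equation*}
where $\kappa_\rho$ is the anisotropic $\rho$-curvature; the Wulff shape $\wt U_0$ is characterized by $\kappa_\rho\equiv\text{const}$ on its boundary (this is the Euler-Lagrange equation for \eqref{E:1.7w}, cf.~\cite{Taylor1975}). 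Since $U_0$ minimizes $\CF$ over $\CU$, the vanishing of $\delta\CF[h]$ for all admissible $h$ yields the pointwise identity
\begin{equation*}
\bigl(\partial_\nu\phi(x)\bigr)^2 \,=\, \kappa_\rho(x), \qquad x\in\partial U_0.
\end{equation*}

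Now suppose $U_0=x_0+t\wt U_0$ for some $x_0\in\BR^2$ and $t>0$. Then $\kappa_\rho$ is constant on $\partial U_0$ (homothety merely rescales it), and hence $|\partial_\nu\phi|$ is constant on $\partial U_0$ by the Euler-Lagrange identity. Serrin's theorem on overdetermined elliptic problems then forces $U_0$ to be a Euclidean disk, since $\phi$ is a positive Dirichlet solution of the autonomous equation $-\Delta\phi=\lambda\phi$ with constant normal derivative. But then $\wt U_0$, being homothetic to $U_0$, is also a Euclidean disk, which happens if and only if $\rho$ is proportional to the Euclidean norm; the converse direction is trivial.

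The main technical obstacle is the rigorous derivation of the first variation of $\mathfs{P}$ starting from the definition \eqref{E:1.12} as a supremum over polygonal inscriptions, together with the identification of $\kappa_\rho$; for $\rho$ smooth outside the origin and $\partial U_0\in C^2$ this is standard Finsler-geometric calculus, and one also has to verify that varying over smooth compactly supported normal perturbations within $\CU$ is sufficient to conclude the pointwise Euler-Lagrange identity rather than merely an integrated version. A secondary point is the applicability of Serrin's moving-plane argument to the eigenvalue equation $-\Delta\phi=\lambda\phi$: this extends by his original method thanks to the positivity $\phi>0$ in $U_0$ and the autonomous form of the nonlinearity.
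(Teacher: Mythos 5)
Your second assertion (smooth shapes) is handled in essentially the same way as the paper: derive the Euler--Lagrange condition $|\nabla h|^2=\kappa_\rho$ on $\partial U_0$ via Hadamard's formula and the first variation of the anisotropic perimeter, observe that homothety to the Wulff shape forces $\kappa_\rho\equiv\text{const}$, hence $|\partial_\nu h|\equiv\text{const}$, and then invoke Serrin's overdetermined-problem rigidity to conclude that $U_0$ is a Euclidean disk and $\rho$ proportional to the Euclidean norm.

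For the existence assertion you deviate from the paper: you try to \emph{deduce} it from the rigidity statement, whereas the paper gives a self-contained explicit counterexample (the anisotropic $\ell^1$-norm $\rho(x)=\tfrac1n|x_1|+n|x_2|$, for which both minimizers can be computed within the rectangle family and shown to have different aspect ratios once $n$ is large). Your reduction is a clean idea, but as written it has a gap. You claim that for any $\rho$ ``smooth and strictly convex outside the origin'' the Wulff shape has $C^2$ boundary. This is false in general: $\partial\wt U_0$ is the unit sphere of the dual norm $\rho^*$, and smoothness of $\rho$ and strict convexity of $\rho$ only yield strict convexity and $C^1$-smoothness of $\rho^*$, respectively---not $C^2$. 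A concrete counterexample is $\rho=\|\cdot\|_4$, whose dual $\|\cdot\|_{4/3}$ is $C^1$ but not $C^2$ on the coordinate axes. To make the reduction work you would need to require, e.g., that the unit $\rho$-sphere has everywhere positive curvature (equivalently, the Hessian of $\rho^2$ is positive definite away from $0$); then $\rho^*$ is genuinely $C^2$ outside the origin, $\partial\wt U_0$ is $C^2$, and the contradiction argument closes. Such norms (a small $C^\infty$, positively curved perturbation of an ellipse, say) certainly exist, so the statement still follows, but the hypothesis in your construction must be strengthened. By contrast, the paper's explicit example avoids regularity considerations entirely (the asymmetric $\ell^1$-norm is not even $C^1$) and gives more concrete information about how far apart the two minimizers can be.

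A small additional remark: your last paragraph correctly flags the two technical points (rigorous first variation of $\mathfs{P}$ from the polygonal-supremum definition, and the applicability of Serrin to $-\Delta h=\lambda h$); the paper disposes of the first by citing \cite{BBH2011,Giusti,Variation} and of the second by direct citation of \cite{Serrin1971}, which indeed covers $\Delta u+f(u)=0$ with $u>0$.
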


One can also wonder about the analytic form of the dependence of the minimizer of~$U_0$ on the underlying norm~$\rho$. Related to this is the question whether, and by what quantitative means, the near minimizers are close to the actual minimizers. We will measure this closeness using the Hausdorff metric 
\begin{equation}
\label{E:Hausdorff}
\distH(A,B):=\max\Bigl\{\sup_{x\in A}\,\inf_{y\in B}\,|x-y|_2,\sup_{x\in B}\,\inf_{y\in A}\,|x-y|_2\Bigr\}
\end{equation}
where~$|\cdot|_2$ will henceforth denote the Euclidean norm on~$\BR^2$. We then have:

\begin{theorem}[Continuity of minimizers in Hausdorff metric]
\label{thm-1}
Recall the notation~$\CM$ from \eqref{E:1.14}.
For each $\epsilon>0$ there is $\delta>0$ such that, for each $U\in\CU$,
\begin{equation}
\label{E:1.15}
\distH(U,\CM)>\epsilon\quad\Rightarrow\quad \CF(U)\ge \min\CF+\delta.
\end{equation}
Moreover, if we center~$U_0$ so that $U_0=-U_0$ and then regard it as an element of the metric space~$(\CU,\distH)$ and~$\rho$ as an element of the space of continuous functions on the unit (Euclidean) sphere in~$\BR^2$ endowed with the supremum norm, then~$U_0$ depends continuously on~$\rho$.
\end{theorem}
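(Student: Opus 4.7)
The plan is a compactness-plus-uniqueness argument. In both parts I argue by contradiction: assume a near-minimizing sequence of shapes stays uniformly Hausdorff-far from~$\CM$ (resp.~the centered minimizer~$U_0$), extract a Hausdorff-convergent subsequence via Blaschke selection, identify the limit as a minimizer of the relevant functional, and reach a contradiction via the uniqueness from Theorem~\ref{thm-0}.

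For~\eqref{E:1.15}, suppose it fails: there exist $\epsilon_0>0$ and $U_n\in\CU$ with $\CF(U_n)\to\min\CF$ but $\distH(U_n,\CM)>\epsilon_0$. Since $\CP(U_n)\le\CF(U_n)$ is bounded and $\rho$ is equivalent to the Euclidean norm, the diameters of the~$U_n$ are uniformly bounded; after translating (leaving $\CF$ and the distance to the translation orbit of~$U_0$ invariant) we may assume $U_n\subset B_R$ for a fixed~$R$. Replacing~$U_n$ by its convex hull~$V_n$ only decreases~$\CF$, so $\CF(V_n)\to\min\CF$ as well. Blaschke selection gives $V_{n_k}\to V$ in~$\distH$ with $V$ compact convex and non-degenerate (otherwise $\lambda(V^\circ)=\infty$). \v{S}ver\'ak's stability of the Dirichlet eigenvalue under Hausdorff convergence for planar sets with uniformly bounded number of complementary components, together with continuity of the $\rho$-perimeter on convex bodies, forces $\CF(V^\circ)=\min\CF$, so $V^\circ$ lies in the translation orbit of~$U_0$.

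What remains is to bridge~$U_n$ back to~$V_n$. From $0\le\CF(U_n)-\CF(V_n)\to0$ and non-negativity of the two constituent differences, both $\lambda(U_n)-\lambda(V_n)\to0$ and $\CP(U_n)-\CP(V_n)\to0$. Each bounded connected component of $V_n\setminus U_n$ is an ``indentation'' enclosed by an arc of~$\partial U_n$ and a chord of~$\partial V_n$; the chord is the $\rho$-geodesic between its endpoints, and the total $\rho$-length excess of the arcs over the chords equals $\CP(U_n)-\CP(V_n)$. An elementary planar estimate (bounding each indentation's area by its chord length times its length excess, then Cauchy--Schwarz over the indentations) yields $|V_n\setminus U_n|\to0$; combined with $V_n\to V$ in~$\distH$ and convexity of~$V$, this gives $\distH(U_n,V)\to0$, and hence $\distH(U_n,\CM)\to0$ after accounting for the translation normalization, contradicting $\distH(U_n,\CM)>\epsilon_0$.

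For continuous dependence on~$\rho$, let $\rho_m\to\rho$ uniformly on the Euclidean unit circle; write $\CF_m,\CP_m$ for the perturbed functionals and let~$U_m$ denote the centered minimizer of~$\CF_m$, so that $U_m=-U_m$ and~$U_m$ is convex by Theorem~\ref{thm-0}. Uniform equivalence of~$\rho_m$ with~$\rho$ for large~$m$ controls $\min\CF_m$ and hence the diameters of the~$U_m$. Blaschke produces a subsequential limit~$W$, convex with $W=-W$. For any fixed $V\in\CU$, $\CP_m(V)\to\CP_\rho(V)$ by uniform convergence of~$\rho_m$; along the subsequence, continuity of~$\lambda$ under Hausdorff convergence and lower semicontinuity of the $\rho$-perimeter on convex bodies give
\[
\CF_\rho(W)\le\liminf_m\CF_m(U_m)\le\lim_m\CF_m(V)=\CF_\rho(V).
\]
Thus~$W$ is a centered minimizer of~$\CF_\rho$, and uniqueness in Theorem~\ref{thm-0} forces $W=U_0$. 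Every convergent subsequence of~$(U_m)$ has the same limit, promoting the convergence to the full sequence. The main obstacle throughout is the bridge in the second paragraph: quantitatively converting a small $\CF$-deficit into Hausdorff proximity without any regularity on~$\partial U_n$. The area estimate for an indentation must handle long shallow and short deep shapes alike, uniformly in~$n$, and it is here that the technical analysis concentrates.
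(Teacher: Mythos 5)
Your treatment of the second part (continuity of $U_0$ in $\rho$) is correct and essentially the same as the paper's: extract a Hausdorff-convergent subsequence of the centered minimizers by Blaschke, use joint lower semicontinuity of~$\CF$ in the set and the norm to show the limit is a centered minimizer for the limit norm, and conclude by uniqueness. The only imprecision is that you invoke ``lower semicontinuity of the $\rho$-perimeter'' when what you actually need is joint lower semicontinuity $\CP_\rho(W)\le\liminf_m\CP_{\rho_m}(U_m)$ with the norm \emph{and} the set varying simultaneously; the paper records exactly this statement as Lemma~\ref{lemma-3.7u}.

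For the quantitative statement~\eqref{E:1.15}, however, your ``bridge'' from $U_n$ to its convex hull $V_n$ has genuine gaps and follows a route different from, and weaker than, the paper's. First, your claimed elementary estimate ``area of an indentation $\le$ chord length $\times$ length excess'' is false: for a region bounded by a chord of length~$c$ and an arc of length~$c+\ell$, the Dido/circular-segment extremizer shows the area can be of order $c^{3/2}\ell^{1/2}$ (the maximal height of the arc above the chord scales like $\sqrt{c\ell}$, not like $\ell$), so the exponent in your bound is wrong. With the corrected scaling one can still force $|V_n\smallsetminus U_n|\to0$ using $\sum_i\ell_i=\CP(U_n)-\CP(V_n)\to0$ and Cauchy--Schwarz, but this is not what you wrote. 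Second, and more seriously, the inference from $|V_n\smallsetminus U_n|\to0$ to $\distH(U_n,V_n)\to0$ is not ``combined with convexity'' automatic --- it genuinely uses the uniform thickness of the convex hulls (each contains a disc of radius~$r$ and is contained in one of radius~$R$, with $r/R$ bounded away from~$0$, by Lemma~\ref{lemma-2.1}); without that argument the step simply doesn't go through, and you only flag it as ``where the technical analysis concentrates'' without supplying it.

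The paper's Proposition~\ref{prop-2.10} sidesteps this entirely and is worth contrasting with your route. It does not attempt to show $\distH(U_n,V_n)\to0$ directly, and in fact explicitly notes that $U_n$ need not Hausdorff-converge at all. Instead it passes to the complements: after reducing to a subsequence, $\distH(B\smallsetminus U_n,B\smallsetminus V)\to0$ for some open $V\subseteq W$ (where $W$ is the Hausdorff limit of~$\wh U_n$), and then invokes \v{S}ver\'ak's stability theorem for the \emph{non-convex} sets~$U_n$ to conclude $\lambda(U_n)\to\lambda(V)$. Combined with $\CP(U_n)\ge\CP(\wh U_n)\to\CP(W)$ and $\CF(U_n)\to\min\CF$, this forces $\lambda(V)\le\lambda(W)$; but if $\distH(U_n,\wh U_n)\ge\epsilon$, then $V$ is a proper simply connected subset of the convex set $W$, and Lemma~\ref{lemma-hull} (positive capacity of the removed set in $d=2$) gives the strict inequality $\lambda(V)>\lambda(W)$, a contradiction. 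You use \v{S}ver\'ak only for the convex sets $V_n\to V$, where the much simpler Lemma~\ref{lemma-3.7vv} already suffices, and you miss the place where \v{S}ver\'ak and the capacity argument actually do real work. Your area-based approach could likely be repaired to a complete proof, but as written it contains both an incorrect estimate and an elided nontrivial step, whereas the paper's capacity argument handles the deep-thin-spike phenomenon cleanly and with no area estimates at all.
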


We remark that \eqref{E:1.15} is analogous to the Bonnesen inequality known from the context of the isoperimetric problem. (A formulation of this inequality for perimeters defined using a general norm has been given by Dobrushin, Koteck\'y and Shlosman~\cite[Chapter~2]{dks1992}.) The only quantitative connection we know of between the shape and the norm requires passage through the corresponding eigenfunction (see Remarks~\ref{rem1}--\ref{rem2}) and thus remains quite inexplicit.

\subsection{Facets and corners}
Our next item of concern is the appearance of facets and corners on the boundary of~$U_0$. Here a \emph{facet} is a maximal (with respect to inclusion) non-trivial closed linear segment contained in~$\partial U_0$. A \emph{corner} is in turn a point on~$\partial U_0$, regarded as an oriented curve, where the unit tangent vector undergoes a jump. (By convexity of~$U_0$, the left and right unit tangent is well defined at all points of~$\partial U_0$.)  

We will link the facets of~$U_0$ to (what we will call) \emph{degenerate directions} of~$\rho$. To define this concept, fix~$\rho$ and let $e$ and~$e'$ be two orthogonal unit vectors. By the triangle inequality, the function $s\mapsto\rho(e+se')$ is convex and so the one-sided derivatives
\begin{equation}
\label{E:degen}
\theta^\pm:=\frac{\textd}{\textd s^\pm}\rho(e+se')\Bigl|_{s=0}
\end{equation}
are well defined.
As an immediate consequence of the definitions we obtain
\begin{equation}
\label{E:2.37}
-\rho(e')\le\theta^-\le\theta^+\le\rho(e')
\end{equation}
and if we mark the dependence on~$e$ and~$e'$ explicitly as $\theta^\pm(e,e')$, we also have
\begin{equation}
\theta^+(e,-e')=-\theta^-(e,e')=-\theta^-(-e,-e')=\theta^+(-e,e').
\end{equation}
In particular, the difference $\theta^+(e,e')-\theta^-(e,e')$ is preserved under the reversal of the orientation of any (or both) of the vectors~$e$ and~$e'$. We now put forward:

\begin{definition}[Degenerate direction]
Let~$e$ be a unit vector and $e'$ a unit vector orthogonal to~$e$. Given a norm~$\rho$ on~$\BR^2$, we say that~$e$ is \emph{degenerate for~$\rho$} if
\begin{equation}
\theta^+(e,e')-\theta^-(e,e')>0.
\end{equation}
\end{definition}

We then claim:

\begin{theorem}[Facets equivalent to degeneracy]
\label{prop-2.12}
Let~$U_0\in\CU$ be the (unique, convex) minimizing shape of~$U\mapsto\CF(U)$ for~$\CF$ defined using a norm~$\rho$. Let~$e$ be a unit vector in~$\BR^2$. Then the curve constituting $\partial U_0$ contains a non-trivial linear segment in the direction of~$e$ if and only if~$e$ is degenerate for~$\rho$.
\end{theorem}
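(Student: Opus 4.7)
My plan is a two-directional variational argument that tests the minimality of~$U_0$ (guaranteed by Theorem~\ref{thm-0}) against localized boundary perturbations. Each perturbation is designed so that the first-order effect on~$\CF$ splits into a perimeter contribution governed by the one-sided derivatives~$\theta^\pm$ and an eigenvalue contribution governed by the Hadamard formula $\delta\lambda=-\int V|\nabla u|^2\,\textd\mathcal{H}^1$, where~$V$ is the outward normal displacement of the boundary and~$u$ is the $L^2$-normalized first Dirichlet eigenfunction of~$U_0$. By Hopf's lemma, $|\nabla u|>0$ on every smooth portion of $\partial U_0$, in particular on a facet or on a strictly convex arc.

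For the forward direction, let $S\subset\partial U_0$ be a facet of direction~$e$ with outward unit normal~$e'$. Fix $\ell>0$ small and raise a centred sub-interval of~$S$ of length~$\ell$ outward into a symmetric tent of height~$h>0$, producing two segments in directions $e\pm s\,e'$ with $s=2h/\ell$. The new perimeter exceeds the old by $(\ell/2)[\rho(e+se')+\rho(e-se')-2\rho(e)]=h(\theta^+-\theta^-)+o(h)$ as $h\to0$, while the tent encloses extra area $h\ell/2$ and Hadamard gives $\delta\lambda=-(h\ell/2)|\nabla u|^2+o(h)$. If $\theta^+=\theta^-$, then $\delta\CF<0$ for all sufficiently small~$h$ (since $|\nabla u|>0$), contradicting minimality; hence $\theta^+>\theta^-$, i.e., $e$ is degenerate.

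For the converse, assume~$e$ is degenerate but $\partial U_0$ has no facet of direction~$e$. Convexity of~$U_0$ and surjectivity of its Gauss map leave two cases: (A) a unique $C^1$ point $p\in\partial U_0$ with tangent~$e$ at which $\partial U_0$ is strictly convex, or (B) a corner at some $p\in\partial U_0$ whose left and right tangents $e_L,e_R$ straddle~$e$. In case~(A), write $\partial U_0$ near~$p$ as $y=-g(x)$ in the $(e,e')$-frame with~$g$ strictly convex and $g(0)=g'(0)=0$, and cut by the chord $y=-h$; endpoints $(-a_-,-h),(a_+,-h)$ satisfy $g(-a_-)=g(a_+)=h$ with $a_\pm\to0$. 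Integrating $\rho((1,-g'(x)))$ on either side of $x=0$ with the same one-sided expansion as above yields $\delta\CP=-(\theta^+-\theta^-)h+o(h)$, while the removed cap has area $O(h(a_++a_-))=o(h)$, so $\delta\lambda=o(h)$; degeneracy forces $\delta\CF<0$ for small~$h$. In case~(B), write $e=\alpha e_L+\beta e_R$ with $\alpha,\beta>0$ and remove the triangle~$T$ with vertices~$p$, $p-\ell\alpha e_L$, $p+\ell\beta e_R$: the chord replacing the two short sides has direction~$e$ and length~$\ell$, so $\delta\CP=-\ell[\alpha\rho(e_L)+\beta\rho(e_R)-\rho(e)]$. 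Writing $e_L,e_R$ at angles $-\psi_L,\psi_R$ from~$e$ and Taylor-expanding with~$\theta^\pm$ gives
\begin{equation*}
\alpha\rho(e_L)+\beta\rho(e_R)-\rho(e)=\frac{\psi_L\psi_R}{\psi_L+\psi_R}(\theta^+-\theta^-)+o(\psi_L+\psi_R)>0,
\end{equation*}
so $\delta\CP\le-c\ell$ for some $c>0$ (with $\psi_L,\psi_R$ fixed by the corner). A test-function estimate $\lambda(U_0\setminus T)\le\int|\nabla(u\eta)|^2/\int(u\eta)^2$ with~$\eta$ a smooth cut-off vanishing on~$T$ gives $\delta\lambda=O(\ell^2)$, using $u(p)=0$. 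Both cases contradict the minimality of~$U_0$, forcing $\partial U_0$ to contain a facet of direction~$e$.

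The main technical obstacle is the bound $\delta\lambda=O(\ell^2)$ in case~(B): Hadamard's formula does not apply at a corner, so it must be replaced by the direct variational estimate with test function~$u\eta$, where $u(p)=0$ is essential to reduce the cut-off cost from the harmless-looking~$O(\ell)$ down to~$O(\ell^2)$, so that the $O(\ell)$ perimeter gain is decisive. Everywhere else, the one-sided-derivative computations are routine Taylor expansions in the parameter of the tent or chord.
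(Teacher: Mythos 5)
Your forward direction (facet $\Rightarrow$ degeneracy) is essentially the paper's proof: the tent is a particular convex displacement $\psi$ of the facet, and the resulting first-order balance between the Hadamard term and the perimeter term $h(\theta^+-\theta^-)$ is exactly the paper's computation. That part is fine.

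Your converse direction contains a genuine gap in case~(B). The identity
\begin{equation*}
\alpha\rho(e_L)+\beta\rho(e_R)-\rho(e)=\frac{\psi_L\psi_R}{\psi_L+\psi_R}(\theta^+-\theta^-)+o(\psi_L+\psi_R)
\end{equation*}
is only a Taylor expansion as $\psi_L,\psi_R\to0$, but, as you note parenthetically, $\psi_L,\psi_R$ are \emph{fixed} by the corner; the $o(\psi_L+\psi_R)$ is therefore not small and you cannot conclude positivity from it. Worse, the conclusion is actually false in the scenario you are trying to rule out: if~$U_0$ genuinely had a corner at~$p$ with tangents $e_L,e_R$, then the first part of Theorem~\ref{prop-2.16} gives $\rho(a e_L+b e_R)=a\rho(e_L)+b\rho(e_R)$ for all $a,b\ge0$, so the quantity on the left is exactly~$0$ and your perimeter gain $\delta\CP\le-c\ell$ evaporates. (In fact this shows case~(B) is vacuous when $e$ is degenerate — $\rho$ would be affine on a neighbourhood of~$e$ in the cone, so $\theta^+=\theta^-$ — but establishing that requires the corner theorem, which comes \emph{after} the facet theorem and has its own variational proof; invoking it here would be circular within the paper's ordering.) Your case split is also not exhaustive: it omits the possibility $e=e_L$ or $e=e_R$ at a corner. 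Finally, the triangle~$T$ with vertices $p$, $p-\ell\alpha e_L$, $p+\ell\beta e_R$ is not contained in~$U_0$ (the tangent lines are supporting lines, so those two vertices lie outside), so ``$U_0\setminus T$'' is not the set you describe.

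The paper sidesteps all of this with a single uniform construction: cut~$U_0$ by a chord parallel to~$e$ at (signed) distance~$s$ from the unique contact point, and apply Lemma~\ref{lemma-2.13}~\eqref{E:2.40a} to the removed arc — with $\sum t_i=|T_s|$ and $\sum u_i=0$, total variation $\sum|u_i|\ge2s$ — to get $\CP(V_s)\le\CP(U_0)-\tfrac{\theta^+-\theta^-}{2}\,s$ regardless of whether the contact point is $C^1$ or a corner. Combined with $|U_0\setminus V_s|=o(s)$ (the only place the no-facet hypothesis is used) and the Hadamard/gradient bound for the eigenvalue, this forces $\theta^+=\theta^-$. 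No case analysis, no corner lemma, no cutoff-function estimate. Your case~(A) is, in effect, this argument restricted to the $C^1$ situation; you should replace your case~(B) by the same chord-cut argument, noting it never appeals to differentiability of $\partial U_0$ at~$p$.
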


As is easy to check, the $\ell^1$-norm has $\pm 2^{-1/2}(e_1\pm e_2)$ as its degenerate directions, and this will remain in effect even when we add an arbitrary norm to the $\ell^1$-norm. By adding appropriately weighted rotations of the $\ell^1$-norm, we can thus construct a norm whose minimizing shape will have infinitely many facets.

\smallskip
Let us now move to the appearance of corners. Pick $U\in\CU$ and let $t\mapsto\gamma(t)$ be the parameterization of~$\partial U$ by the \emph{Euclidean} arc-length in the counterclockwise direction. If~$U$ is convex, then~$\gamma$ is right/left differentiable which means that 
\begin{equation}
v^\pm(x):=\frac{\textd}{\,\textd t^\pm}\gamma(t),\quad\text{where}\quad x:=\gamma(t),
\end{equation}
exist with the vectors $\nu^\pm(x)$ having unit Euclidean norm. Note that, since the definition is for a fixed parametrization of the curve, we may and will regard~$v^\pm$ as functions of~$x\in\partial U$.

\begin{definition}
We say that a convex set $U\in\CU$ has a \emph{corner} at a point~$x_0\in\partial U_0$ if
\begin{equation}
v^+(x_0)\ne v^-(x_0)
\end{equation}
\end{definition}
  
We then have:

\begin{theorem}[Corners and strict convexity of the norm]
\label{prop-2.16}
Let~$\rho$ be a norm on~$\BR^2$ and let~$U_0$ be a (convex) minimizer of $U\mapsto\CF(U)$ for the norm~$\rho$. If~$U_0$ has a corner at a point~$x_0\in\partial U_0$, then~$\rho$ and the vectors $v^\pm:=v^\pm(x_0)$ must obey
\begin{equation}
\label{E:2.60}
\rho(a v^-+bv^+)=a\rho(v^-)+b\rho(v^+),\qquad a,b\ge0.
\end{equation}
On the other hand, if \eqref{E:2.60} holds for two distinct unit vectors $\nu^+,\nu^-\in\BR^2$, then there is a point $x_0\in\partial U_0$ such that $\nu^\pm(x_0)=\nu^\pm$ and~$U_0$ thus has a corner at~$x_0$. In particular, $\partial U_0$ is continuously differentiable if and only if~$\rho$ is strictly convex. 
\end{theorem}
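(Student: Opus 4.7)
The plan is to establish both implications by constructing explicit local variations of $U_0$ and invoking the minimality condition $\CF(U_0)\le\CF(U')$, exploiting that the $\rho$-perimeter changes at linear order in a small parameter while a regularity input for the principal Dirichlet eigenfunction near a convex corner forces the eigenvalue change to be of higher order.

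For the forward direction, assume $U_0$ has a corner at $x_0$ with left/right tangents $v^\pm$. For fixed $s,t>0$ and small $\epsilon>0$, I would set $x^\epsilon_-:=x_0-\epsilon s v^-$ and $x^\epsilon_+:=x_0+\epsilon t v^+$ on the two branches of $\partial U_0$ adjacent to~$x_0$, and define $U_0^\epsilon\subset U_0$ by replacing the short arc of $\partial U_0$ between $x^\epsilon_\pm$ (which contains~$x_0$) with the straight segment joining them, thereby excising a thin triangle of diameter $O(\epsilon)$. Using the continuity of the tangent on the two branches as one approaches $x_0$, the perimeter change is
\begin{equation*}
\mathfs P(U_0^\epsilon)-\mathfs P(U_0)=\bigl[\rho(sv^-+tv^+)-s\rho(v^-)-t\rho(v^+)\bigr]\epsilon+o(\epsilon).
\end{equation*}
For the eigenvalue, the key input is elliptic regularity at a convex corner: since the interior angle $\alpha$ of $U_0$ at $x_0$ lies in $(0,\pi)$, the principal Dirichlet eigenfunction $\phi$ satisfies $\phi(r,\theta)\sim r^{\pi/\alpha}$ in polar coordinates centred at~$x_0$, with $\pi/\alpha>1$. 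Plugging an $\epsilon$-localized cutoff of~$\phi$ into the Rayleigh quotient on $H_0^1(U_0^\epsilon)$ and balancing the $|\nabla\chi|^2\phi^2$ boundary-layer term against the interior decay then gives $\lambda(U_0^\epsilon)-\lambda(U_0)=o(\epsilon)$. Minimality of $\CF(U_0)$ yields $\rho(sv^-+tv^+)\ge s\rho(v^-)+t\rho(v^+)$ for all $s,t\ge 0$, and combined with the triangle inequality this produces \eqref{E:2.60}.

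For the reverse direction, suppose \eqref{E:2.60} holds for two distinct unit vectors $v^\pm$; I will argue by contradiction. Assume $U_0$ has no corner with these tangents. By convexity of $U_0$ and a choice of orientation one can pick $x^\pm\in\partial U_0$ with tangents $v^\pm$; the outward tangent rays at~$x^-$ in direction $v^-$ and at~$x^+$ in direction~$-v^+$ meet at a point $x_0\notin\overline{U_0}$, creating a triangle $T$ with vertices $x^-,x_0,x^+$ whose sides have $\rho$-lengths $a\rho(v^-)$ and $b\rho(v^+)$ for some $a,b>0$. Set $U_0':=U_0\cup T$, a convex Jordan domain strictly containing~$U_0$. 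The new segments contribute $a\rho(v^-)+b\rho(v^+)=\rho(av^-+bv^+)=\rho(x^+-x^-)$ by \eqref{E:2.60}, while by the supremum definition of $\mathfs P$ applied to polygonal approximations, the replaced arc of $\partial U_0$ has $\rho$-length at least $\rho(x^+-x^-)$. Hence $\mathfs P(U_0')\le \mathfs P(U_0)$. Strict domain monotonicity of the principal eigenvalue gives $\lambda(U_0')<\lambda(U_0)$, so $\CF(U_0')<\CF(U_0)$, contradicting the minimality of~$U_0$. The ``in particular'' statement then follows: $\partial U_0$ is $C^1$ if and only if $U_0$ has no corner, which by the two previous implications is equivalent to \eqref{E:2.60} failing for every pair of distinct unit vectors~$v^\pm$, i.e.\ to strict convexity of~$\rho$.

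The main obstacle is the $o(\epsilon)$ estimate for $\lambda(U_0^\epsilon)-\lambda(U_0)$ in the forward direction; it requires exploiting the precise $r^{\pi/\alpha}$-decay of $\phi$ at the convex corner with $\pi/\alpha>1$ together with a carefully tuned cutoff in the Rayleigh quotient. A secondary delicate point in the reverse direction is that the directions $v^\pm$ need not both be realized as tangents at smooth points of $\partial U_0$; since the outward normal is a monotone multi-valued map onto $S^1$, any unit direction is achieved either at a smooth point or as a one-sided tangent at an existing corner, and in the latter case one adjusts the choice of $x^\pm$ and the triangle construction accordingly while still producing a strict decrease of $\CF$.
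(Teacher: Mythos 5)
Your overall strategy coincides with the paper's: cut a small triangle at the corner to get the necessary condition on $\rho$, and add a triangle by extending tangent lines for the converse. Where you differ is in how the eigenvalue perturbation is controlled in the forward direction. You propose to use the corner asymptotics $\phi(r,\theta)\sim r^{\pi/\alpha}$ with $\pi/\alpha>1$ together with a Rayleigh-quotient estimate for a cutoff of the eigenfunction; the paper instead integrates Hadamard's variational formula \eqref{eq:hadamard} over $s$ and uses the uniform bound on $|\nabla h|$ for convex domains from Lemma~\ref{lemma-2.14}(3), which immediately gives $\lambda(V_s)-\lambda(U_0)\le C|U_0\smallsetminus V_s|=O(s^2)$. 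The paper's route is cleaner because the uniform gradient bound is already established earlier and makes the $o(s)$ estimate immediate, whereas the corner asymptotics you invoke are themselves a nontrivial regularity statement that would need a citation and some care to make rigorous (though they do give the result; in fact, a uniform gradient bound alone would give $\phi(x)\lesssim\dist(x,\partial U_0)\lesssim\epsilon$ on the boundary layer, so your Rayleigh-quotient computation would close without the refined exponent). Your reverse direction is essentially identical to the paper's, including the use of strict domain monotonicity of $\lambda$ and the triangle inequality for the $\rho$-length of the replaced arc; you are slightly less explicit than the paper about why the intersection point of the tangent lines lies outside $\overline{U_0}$ under the no-corner hypothesis, but the point you need is exactly the one the paper makes ("if $x_0\in\partial U_0$ then $U=U_0$"). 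Your final remark about realizing $\nu^\pm$ as one-sided tangents at possibly non-smooth points is a reasonable precaution that the paper glosses over. In short: correct, same construction, different (and somewhat heavier) machinery for the eigenvalue estimate in the forward direction.
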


By the symmetries of the norm, if~$U_0$ is centered and has a corner at~$x_0$ with tangent vectors~$\nu^\pm$, then it also has a corner at $-x_0$ with tangent vectors $-\nu^\pm$. However, this does not imply that~$U_0$ is a parallelogram as that would require \eqref{E:2.60} for all $a,b\in\BR$. In fact, there are norms for which~$U_0$ has corners and yet no facets at all. This is best seen from the fact that the above characterizations of facets and corners (including the proofs thereof) apply equally well to the isoperimetric problem \eqref{E:1.7w}. There we can enforce the desired features of the minimizing shape~$\widetilde U_0$ from \eqref{E:wulff} directly and then extract $\rho$ by the duality transformation $\rho(x):=\sup\{x\cdot y\colon y\in \widetilde U_0\}$.

\section{Earlier work, key ideas, remarks and outline}
\noindent
Having introduced the problem and stated the main results, let us now discuss briefly the context of this work including the prior literature on the subject. We will then also outline the key ideas and give a list of remarks and directions for possible extensions.

\subsection{Earlier work}
Shape optimization for Dirichlet eigenvalues of the Laplacian has been a topic of considerable interest in recent years. Early work focused on optimizing the eigenvalues under a constraint on the Lebesgue volume. Here the case of the principal eigenvalue goes back to Faber and Krahn's solution of the problem posed by Lord Rayleigh in his ``Theory of Sound.''  The case of higher eigenvalues has been resolved only in the last 10-15 years; see, e.g., the books by Henrot~\cite{henrot} and Velichkov~\cite{Velichkov} for details as well as some historical perspective.

More recently, attempts have been made to find shapes optimizing Dirichlet eigenvalues of the Laplacian subject to other types of constraints than volume. In particular, this includes constraints on the perimeter. The relevant references are the papers of Bucur and Freitas~\cite{Bucur-Freitas}, Bucur, Butazzo and Henrot~\cite{BBH2011}, van den Berg and Inversen~\cite{vdBI2013} and de Philippis and Velichkov~\cite{dPV2014}. However, all of these works focus solely on the perimeter defined using the Euclidean norm and, since this makes the case of the principal eigenvalue directly solvable, on higher-order eigenvalues.

Our interest in perimeters that are defined using non-isotropic norms arises from a study of a random walk on~$\BZ^2$ that (self-)interacts through the size of its boundary. As a result of the interaction, the random walk gets squeezed to a subdiffusive spatial scale. Berestycki and Yadin~\cite{BY2013}, who introduced this problem in the first place, identified the typical diameter of the support of the walk for large times. In~\cite{Biskup-Procaccia}, the present authors show that (in the limit of large times and vanishing temperatures) the asymptotic shape of the visited set is given by minimizing the functional \eqref{E:1.13} for a suitably defined underlying norm~$\rho$. This norm is model dependent, invariably non-isotropic (due to persistent lattice structure) and exhibiting degenerate directions as well as directions where strict convexity fails. Hence our interest in the understanding of the resulting effects on the shape of the minimizer.

\subsection{Main ideas and their limitations}
Let us now move to discuss some aspects of the proofs in the present paper. Some of these are fairly straightforward (e.g., the existence and convexity of a minimizing set) but others require novel and (we think) interesting ideas. One such case is the proof of uniqueness (up to translates) of the minimizing set which is based on a convexity argument with respect to the Minkowski addition of sets: For $U,V\subset\BR^2$ and $\alpha\in[0,1]$, let
\begin{equation}
\alpha U+(1-\alpha)V:=\bigl\{\alpha x+(1-\alpha)y\colon x\in U,\,y\in V\bigr\}.
\end{equation}
For convex~$U$ and~$V$ the principal eigenvalue turns out to be convex,
\begin{equation}
\label{E:2.2ww}
\lambda\bigl(\alpha U+(1-\alpha)V\bigr)\le\alpha\lambda(U)+(1-\alpha)\lambda(V),\qquad\alpha\in[0,1],
\end{equation}
and, in fact, \emph{strictly} so for $\alpha\in(0,1)$ unless the sets~$U$ and~$V$ are homothetic (see~Lemma~\ref{lemma-2.9} whose proof we extract from Colesanti~\cite{colesanti2005brunn}). The perimeter is affine (for convex~$U$ and~$V$),
\begin{equation}
\CP\bigl(\alpha U+(1-\alpha)V\bigr)=\alpha\CP(U)+(1-\alpha)\CP(V),\qquad\alpha\in[0,1],
\end{equation}
(cf Lemma~\ref{lem:parconv}), and so this leads to a type of strict convexity for~$U\mapsto\CF(U)$ that rules out simultaneous occurrence of two non-homothetic minimizers. 

The proof of uniqueness via the stated convexity argument is exactly what forces us to restrict attention to spatial dimension $d=2$. Indeed, the perimeter is no longer affine in~$d\ge3$ and, in fact, the inequality one finds in counterexamples (see Remark~\ref{rem-notaffine}) shows that~$U\mapsto\CF(U)$ is not convex with respect to Minkowski set addition in~$d\ge3$. (In these dimensions, taking the convex hull does not  necessarily decrease the perimeter and so even showing that the minimizer is convex constitutes a non-trivial problem; cf Bucur, Buttazo and Henrot~\cite{BBH2011}.) 

We remark that the results on higher-order eigenvalues (for Euclidean perimeter) in the aforementioned references \cite{Bucur-Freitas,BBH2011,vdBI2013,dPV2014} above do not seem to address uniqueness of a minimizing shape at all. Unfortunately, our method does not appear to contribute in this context either.

Another aspect where our results are limited to $d=2$ is the control of the Hausdorff distance to the minimizing set using the value of~$\CF$. Indeed, similarly to the isoperimetric problem \eqref{E:1.7w}, one can fairly easily find a sequence of sets in $d\ge3$ whose~$\CF$-value approaches the minimum and yet the sets stay a uniform Hausdorff distance away from the class~$\CM$ of the minimizing sets. (Here the $\rho$-perimeter of~$U\in\BR^d$ for~$d\ge3$ is defined as
\begin{equation}
\label{E:2.4ue}
\CP(U):=\int_{\partial U} \rho(n(x))\,H^{d-1}_{\partial U}(\textd x)
\end{equation}
where~$n(x)$ is the normal vector to~$\partial U$ at~$x$ and $H^{d-1}_{\partial U}$ is the $(d-1)$-dimensional Hausdorff measure on~$\partial U$.) A technical reason for the restriction to~$d=2$ is that our proof of Theorem~\ref{thm-1} is conveniently based on a theorem of \v Sver\'ak~\cite{Sverak1993} that provides continuity of the principal eigenvalue under (properly formulated) Hausdorff convergence, but only in $d=2$.

Both the uniqueness of the minimizer and the control of the Hausdorff distance to the minimizers by the value of the functional~$\CF$ are crucial for the application of the above results in the context of the above mentioned random walk problem~\cite{Biskup-Procaccia}. The same applies to our characterization of facets/corners appearing on the minimizing shape. Such features have been of large interest in probabilistic shape theorems (e.g., the recent review by Auffinger, Damron and Hanson~\cite{ADH-review} of shape theorems in first passage percolation). 

\subsection{Further remarks}
\label{sec-remarks}\noindent
We continue with five remarks. The first one addresses certain limitations that were built into the setting of the very problem:

\begin{remark}
Recall that we look for minimizers only in the class of Jordan domains, which are in particular simply connected, connected and with the boundary being the trace of a simple curve. Here are some reasons why we chose this setting.

Simple connectedness is necessary for uniqueness: Removing an isolated point or, in general, a small-enough polar set changes neither the principal eigenvalue nor the perimeter of the set. The restriction to connectedness is more or less an irreducibility statement: The principal eigenvalue of a set with more than one connected components is the minimum of the eigenvalues in the individual components. Dropping all but the minimizing component decreases the perimeter. Finally, the restriction to domains whose boundary is a simple curve is a matter of convenience; particularly, in the definition of the perimeter using the classical arc-length formula~\eqref{E:1.12}.
\end{remark}

The other two remarks are concerned with a direct characterization of the minimizing sets.

\begin{remark}
\label{rem1}
In some special cases (as noted above), the minimizer~$U_0$ can be characterized explicitly. For instance, if~$\rho$ is the Euclidean norm, the Faber-Krahn and isoperimetric inequalities imply that, in the class of sets with a given Lebesgue volume, both the eigenvalue and the perimeter are minimized by the Euclidean ball. By some scaling arguments, the same then applies to the sum thereof. For the $\ell^1$, resp., $\ell^\infty$ norm, the minimizer is an $\ell^\infty$, resp.,~$\ell^1$ ball. This follows by the dual relation of these norms and the monotonicity of the eigenvalue in the underlying set.

Unfortunately, unlike for the isoperimetric problem~\eqref{E:1.7w} where the minimizing shape is simply a ball in the dual norm, for general norm~$\rho$ we do not seem to have a way to describe the minimizing set~$U_0$ directly. For smooth norms with smooth minimizing shape, variational calculus shows that the minimizing shape satisfies
\begin{equation}
\bigl|\nabla h_{U_0}(x)\bigr|^2=C_\rho(x),\qquad x\in\partial U_0,
\end{equation}
where $h_U$ denotes the normalized principal eigenfunction of the Laplacian in~$U$ and~$C_\rho(x)$ is the curvature of~$\partial U_0$ at point~$x$ relative to the arc-length defined by the norm~$\rho$; see the proof of Theorem~\ref{prop-1.2} in Section~\ref{sec-last}. This generally leads to an overdetermined elliptic problem that we do not see prospects of it being explicitly solvable except for one case: $\rho$ =  Euclidean norm.
\end{remark}

\begin{remark}
\label{rem2}
Instead of relying on variational calculus, one might try to mimic the proof that the isoperimetric set for~$\rho$-perimeter is a dilation of the support set of the norm~$\rho$. One is prompted to try this by the fact that the main tool of this proof, the Brunn-Minkowski inequality for the Lebesgue volume, generalizes to the principal eigenvalue of the Laplacian. Indeed, Colesanti~\cite{colesanti2005brunn} showed that for any bounded, open and convex $U,V\subset\BR^d$,
\begin{equation}
\lambda(U+V)^{-1/2}\ge\lambda(U)^{-1/2}+\lambda(V)^{-1/2}.
\end{equation}
This leads to the following inequality for any bounded open and convex $U,V\subset\BR^d$:
\begin{multline}
\qquad
\frac1{\lambda(V)^{3/2}}
\int_{\partial V}\tau_U\bigl(n_V(x)\bigr)\bigl|\nabla h_V(x)\bigr|^2 H_{\partial V}^{d-1}(\textd x)
\\
\ge
\frac1{\lambda(U)^{3/2}}
\int_{\partial U}\tau_U\bigl(n_U(x)\bigr)\bigl|\nabla h_U(x)\bigr|^2 H_{\partial U}^{d-1}(\textd x),
\qquad
\end{multline}
where $n_U(x)$ denotes the normal to $\partial U$ at point~$x$, $\tau_U(n):=\inf\{x\cdot n\colon x\in U\}$ and~$H_{\partial U}^{d-1}$ is the $(d-1)$-dimensional Hausdorff measure on~$\partial U$. The point where this approach currently runs into a dead end is the appearance of the eigenfunctions~$h_V$ and~$h_U$, which seem to prevent us from interpreting the integral as a perimeter functional in \eqref{E:2.4ue}.
\end{remark}

Finally, we want to make the following notes on the qualitative nature of our conclusions:

\begin{remark}
Our proof of Theorem~\ref{prop-2.12} links degenerate directions to facets. However, the argument is not quantitive and so we do not know how the size of the facet depends on the size of $\theta^+-\theta^-$. Progress on this questions may be of interest.
\end{remark}

\begin{remark}
Our proof of Theorem~\ref{thm-1} goes through a limit argument and thus gives no specific relation between~$\epsilon$ and~$\delta$. It would be interesting to derive a quantitative (Bonnesen-like) inequality between $\distH(U,\CM)$ and some function of $\CF(U)-\CF(U_0)$.
\end{remark}

\subsection{Outline}
The remainder of this paper is devoted to the proof of the above results. The strategy is as follows: In Section~\ref{sec2} we first establish the existence of a convex minimizer and then use the aforementioned argument to show that the convex minimizer is unique up to spatial shifts. This proves Theorem~\ref{thm-0}. We then also control the continuity in the underlying norm and rule out non-convex minimizers (and thus prove Theorem~\ref{thm-1}).

In Section~\ref{sec4} we address the criteria for existence/absence of facets (proving Theorem~\ref{prop-2.12}) and relate appearance of corners to non-strict convexity of the norm (proving Theorem~\ref{prop-2.16}). In addition, we also give an explicit example of a norm for which the isoperimetric shape differs from the minimizer of~$\CF$, and prove that these are generically different in the class of smooth norms with smooth minimizers (proving Theorem~\ref{prop-1.2}).

\section{Existence and uniqueness of minimizers}
\label{sec2}\noindent
We are now ready to begin the proofs of our main results. Here we will show all needed facts underlying the proofs of Theorem~\ref{thm-0} and~\ref{thm-1}.

\subsection{Reduction to convex sets}
Recall the notation~$\CF(U)$ for the function in \eqref{E:1.13} and~$\CU$ for the class of admissible domains in \eqref{E:1.11}. The key point of this subsection is the observation that, as far as finding a minimizer is concerned, we can safely restrict attention to convex sets:

\begin{proposition}
\label{prop-2}
Let~$\rho$ be any norm on~$\BR^2$. Given~$U\in\CU$, let $\wh U$ denote the convex hull of~$U$. Then $\wh U\in\CU$ and $\CF(\wh U)\le\CF(U)$.
\end{proposition}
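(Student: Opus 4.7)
The plan is to reduce Proposition~\ref{prop-2} to two separate inequalities, $\lambda(\wh U)\le\lambda(U)$ and $\CP(\wh U)\le\CP(U)$, while checking that $\wh U\in\CU$ along the way. Admissibility of $\wh U$ is quick: since $U$ is open, any $x\in\wh U$ can be written (by Carath\'eodory) as a convex combination $x=\sum_{i=1}^k\lambda_i x_i$ with $x_i\in U$, and then $\sum_i\lambda_i N_i$ is an open neighborhood of~$x$ contained in $\wh U$ whenever $N_i\subset U$ is an open neighborhood of~$x_i$. Thus $\wh U$ is open; being bounded and convex, it is connected and simply connected, and for a bounded open convex set in~$\BR^2$ the boundary is the trace of a simple closed curve. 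Finally, $0\in U\subset\wh U$, so $\wh U\in\CU$. The eigenvalue inequality $\lambda(\wh U)\le\lambda(U)$ is then immediate from monotonicity under inclusion in \eqref{E:1.4}: any~$g$ with $\supp(g)\subset U$ also satisfies $\supp(g)\subset\wh U$.

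The perimeter inequality $\CP(\wh U)\le\CP(U)$ is the heart of the proof. The guiding picture is that $\partial\wh U$ is obtained from $\partial U$ by replacing the concave arcs of $\partial U$ with chords, and the very definition \eqref{E:1.12} of the $\rho$-perimeter as a supremum over polygonal approximations encodes a triangle inequality saying that the $\rho$-length of the chord from~$a$ to~$b$, namely $\rho(b-a)$, is a lower bound on the $\rho$-length of any rectifiable arc from~$a$ to~$b$. To implement this, I would set $K:=\partial U\cap\partial\wh U$ and write $\partial\wh U\setminus K$ as a countable disjoint union of open arcs $C_i\subset\partial\wh U$, using that this set is open in the Jordan curve $\partial\wh U$. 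A supporting-line argument shows each $C_i$ is in fact an open line segment whose two endpoints lie in~$K$: at any $x\in C_i$ one has a supporting line~$L$ to~$\wh U$, and the compact segment $L\cap\ol{\wh U}$ is contained in $\partial\wh U$; its extremities lie in $\ol U\setminus U=\partial U$ (otherwise $L$ could be pushed inward, or a point of $U\cap L$ would be simultaneously interior to $\wh U$ and on $\partial\wh U$), hence in~$K$. The component $C_i$ is this segment minus its endpoints.

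Correspondingly, $\partial U\setminus K$ decomposes into open arcs $A_i$ which, by planar topology, can be matched to the chords $C_i$ so that $A_i$ and $C_i$ share the same pair of endpoints $a_i,b_i$. The triangle inequality built into \eqref{E:1.12} then gives
\begin{equation*}
\rho\text{-length}(C_i)=\rho(b_i-a_i)\le\rho\text{-length}(A_i),
\end{equation*}
and summing over~$i$ together with the common contribution from~$K$ yields $\CP(\wh U)\le\CP(U)$. The main obstacle will be this matching of chords to arcs, together with the verification that the decomposition is compatible with the parametrizations used in~\eqref{E:1.12} (in particular, that cyclic orderings of $K$ along $\partial U$ and along $\partial\wh U$ agree, and that summing $\rho$-lengths of disjoint subarcs recovers the full $\rho$-perimeter); once this topological bookkeeping is in place, everything else reduces to a single application of the triangle inequality for~$\rho$.
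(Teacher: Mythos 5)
Your overall plan---reduce to the two inequalities $\lambda(\wh U)\le\lambda(U)$ (by domain monotonicity in \eqref{E:1.4}) and $\CP(\wh U)\le\CP(U)$, and prove the latter by comparing chords on $\partial\wh U$ to arcs of $\partial U$ via the triangle inequality for $\rho$---is precisely the strategy the paper follows, and your check that $\wh U\in\CU$ is also fine.

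The step you flag as ``the main obstacle'' is, however, a genuine gap rather than routine bookkeeping, and it is exactly the content the paper supplies in Lemma~\ref{lemma-2.3}. Your argument requires (i) a bijective matching between the complementary arcs $A_i$ of $\partial U\setminus K$ and the chords $C_i$ of $\partial\wh U\setminus K$ sharing the same endpoints, (ii) that this matching respects the cyclic orders of the two Jordan curves, and (iii) countable additivity of the $\rho$-arclength over the resulting decomposition (recall that \eqref{E:1.12} is defined as a supremum over partitions, not as an integral). None of these is established in the proposal. The paper handles the order-compatibility issue once and for all by \emph{constructing} a parametrization $\wh\gamma$ of $\partial\wh U$ directly from the given $\gamma$ of $\partial U$: with $A$ the extremal points of $\text{cl}(\wh U)$ (rather than your larger $K$) and $I_A:=\gamma^{-1}(A)$, one sets $\wh\gamma:=\gamma$ on $I_A$ and linear on each complementary interval. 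Lemma~\ref{lemma-2.4} then avoids arc-measure decomposition entirely by working with the supremum definition: any partition approximating $\CP(\wh U)$ can be refined, without decreasing its polygonal sum, so that all $\wh\gamma$-images of partition points lie in $A$; since $\wh\gamma$ and $\gamma$ agree at those parameters, the very same partition is a valid one for $\gamma$ and its sum is $\le\CP(U)$. This sidesteps both the arc/chord matching and the additivity question. To repair your route, you would need to first prove a compatibility statement of the kind encoded by Lemma~\ref{lemma-2.3}, which is where the real geometric input of the proof lies.
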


We begin with a lemma that gives \emph{a priori} estimates on the volume and the perimeter of sets we need to consider. These will be used later in our proofs.

\begin{lemma}
\label{lemma-2.1}
Let~$\rho$ be an arbitrary norm on~$\BR^2$. Then
\begin{equation}
\inf\{\CF(U)\colon U\in\CU\}>0
\end{equation}
and there are constants $c_1,c_2,c_3,c_4\in(0,\infty)$ such that the infimum problem is not affected by restricting to sets satisfying
\begin{equation}
\label{E:2.2}
c_1\le|U|\le c_2\quad\text{and}\quad c_3\le |\partial U|\le c_4,
\end{equation}
where $|\partial U|$ denotes the perimeter of~$U$ in the Euclidean norm on~$\BR^2$.
\end{lemma}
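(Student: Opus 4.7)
\medskip\noindent
\textit{Proof plan.}
The argument will rest on three classical inputs: equivalence of any two norms on~$\BR^2$, the Faber--Krahn inequality, and the Euclidean isoperimetric inequality. Since~$\rho$ is a norm on the finite-dimensional space~$\BR^2$, there is a constant $c=c(\rho)>0$ such that $\rho(x)\ge c|x|_2$ for all $x\in\BR^2$. Inserting this into the definition \eqref{E:1.12} of~$\CP$ yields $\CP(U)\ge c|\partial U|$, where $|\partial U|$ denotes the Euclidean arc-length (these are comparable for the Jordan domains in~$\CU$ since the curve $\gamma$ is rectifiable whenever $\CP(U)<\infty$). In particular, the $\rho$-perimeter controls the Euclidean perimeter from below by a constant factor.

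First I would prove positivity of the infimum. By Faber--Krahn, $\lambda(U)\ge\lambda(B_U)$ where $B_U$ is a Euclidean disk with $|B_U|=|U|$; scaling gives $\lambda(U)\ge A/|U|$ for a universal $A>0$. By the Euclidean isoperimetric inequality $|\partial U|^2\ge4\pi|U|$, so with the norm-equivalence remark above,
\begin{equation}
\CF(U)\,\ge\,\frac{A}{|U|}+2c\sqrt{\pi|U|}.
\end{equation}
The right-hand side, viewed as a function of the single variable $t=|U|\in(0,\infty)$, attains a strictly positive minimum; this furnishes $\inf_{\CU}\CF>0$.

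Next I would prove the \emph{a priori} bounds \eqref{E:2.2}. Testing~$\CF$ on a single explicit admissible set (for instance the Euclidean unit disk centered at the origin, whose principal eigenvalue and $\rho$-perimeter are both finite) produces a finite constant~$M$ with $\inf_{\CU}\CF\le M$. Hence, without changing the infimum, one may restrict to $U\in\CU$ with $\CF(U)\le M$, and on this class both $\lambda(U)\le M$ and $\CP(U)\le M$. I would then cascade these two inequalities:
\begin{itemize}
\item[(i)] $\CP(U)\le M$ together with $\CP(U)\ge c|\partial U|$ yields $|\partial U|\le c_4:=M/c$;
\item[(ii)] (i) together with $4\pi|U|\le|\partial U|^2$ yields $|U|\le c_2:=c_4^2/(4\pi)$;
\item[(iii)] $\lambda(U)\le M$ together with the Faber--Krahn bound $\lambda(U)\ge A/|U|$ yields $|U|\ge c_1:=A/M$;
\item[(iv)] (iii) together with $|\partial U|\ge2\sqrt{\pi|U|}$ yields $|\partial U|\ge c_3:=2\sqrt{\pi c_1}$.
\end{itemize}
All four constants lie in $(0,\infty)$, as required.

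I do not foresee a genuine obstacle here; the only mild care-point is checking that $\CP(U)<\infty$ forces rectifiability of $\partial U$, so that the comparison $\CP(U)\ge c|\partial U|$ really holds in the sense of Euclidean arc-length and not only as a supremum over inscribed polygonal approximations. This, however, is immediate from the definition \eqref{E:1.12} applied to the same partitions used to define the Euclidean length.
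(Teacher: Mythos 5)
Your proof is correct and follows essentially the same route as the paper's: norm equivalence reduces to Euclidean perimeter, Faber--Krahn bounds the eigenvalue below by $A/|U|$, the isoperimetric inequality bounds $|\partial U|$ below by $2\sqrt{\pi|U|}$, and combining these with a finite upper bound on $\inf\CF$ cascades into the two-sided bounds on $|U|$ and $|\partial U|$. Your presentation is somewhat more explicit about the cascading steps, but there is no substantive difference in method.
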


\begin{proofsect}{Proof}
Let $\rho_\mini$, resp., $\rho_\maxi$ denote the minimal, resp., maximal value of $\rho(x)$ for~$x$ on the unit Euclidean sphere in~$\BR^2$. Then
\begin{equation}
\rho_\mini|\partial U|\le\CP(U)\le\rho_\maxi|\partial U|,
\end{equation}
so it suffices to prove the lemma just for the Euclidean norm. Here the $2$-dimensional Faber-Krahn inequality states
\begin{equation}
\lambda(U)\ge \frac{c}{|U|},
\end{equation}
where $c$ is the value of $\lambda(U)$ for $U$ being a disc of unit Lebesgue area, while the isoperimetric inequality yields
\begin{equation}
|\partial U|\ge\,c'|U|^{1/2},
\end{equation}
where $c'$ is the Euclidean perimeter of a disc of the unit area.
Putting these bounds together, an upper bound on~$\CF(U)$ thus yields both upper and lower bounds on~$|U|$. The above then extend these to upper and lower bounds on~$|\partial U|$ as well.
\end{proofsect}

The main step of the proof of Proposition~\ref{prop-2} is that fact that (in~$d=2$), taking the convex hull does not increase the perimeter of the set. To prove this fact we have to show that every polygonal approximation of the boundary of the convex hull~$\widehat U$ of a set~$U\in\CU$ can be reduced to a polygonal approximation of~$\partial U$ of at least that much length. Although this appears intuitively obvious, a formal proof requires some geometric details of how~$U$ is related to~$\widehat U$. Instead of bundling these with the actual argument, we make them the content of a separate lemma.

\begin{lemma}
\label{lemma-2.3}
Let $U\in\CU$ and let $\wh U$ be the convex hull of~$U$. Then $\wh U$ is open and, in fact,
\begin{equation}
\label{E:2.6ww}
\wh U = \text{\rm interior of the convex hull of~$\partial U$}.
\end{equation}
In particular, $\wh U$ is connected.
Moreover, if $A$ denotes the set of extremal points of the closure of~${\wh U}$, then $A\subset \partial U$ while $\partial \wh U\smallsetminus A$ is the disjoint union of open linear segments in~$\BR^2$ with both endpoints in~$A$. Furthermore, if $\gamma\colon[0,1]\to\BR^2$ is a simple closed curve parametrizing $\partial U$, then $I_A:=\{t\in[0,1]\colon\gamma(t)\in A\}$ is a closed subset of~$[0,1]$ and
\begin{equation}
\wh\gamma(t):=\begin{cases}
\gamma(t),\qquad&\text{\rm if }t\in I_A,
\\
\text{\rm linear},\qquad&\text{\rm else},
\end{cases}
\end{equation}
is a (continuous) simple closed curve such that $\wh\gamma([0,1])=\partial \wh U$. In particular, $\widehat U\in\CU$.
\end{lemma}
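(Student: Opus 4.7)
The plan splits into three parts, reflecting the three assertions of the lemma.

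\textbf{Openness and the characterization of $\wh U$.} First I would show $\wh U$ is open: any $x=\sum_{i=1}^k\lambda_ix_i$ with $x_i\in U$ and $\lambda_i\ge0$ summing to~$1$ has a ball around it contained in $\wh U$, obtained by jointly perturbing the $x_i$'s inside small balls in the open set~$U$. For the identification $\wh U=\operatorname{int}(\operatorname{conv}(\partial U))$, the key input is that, $U$ being the interior of a Jordan curve, it coincides with the bounded component of $\BR^2\smallsetminus\partial U$. Hence any line through a point of~$U$ meets~$\partial U$ in at least two points, giving that every point of $U$ lies on a chord of~$\partial U$; this yields $\operatorname{conv}(U)\subset\operatorname{conv}(\partial U)$, while the reverse inclusion $\operatorname{conv}(\partial U)\subset\overline{\operatorname{conv}(U)}$ is immediate from $\partial U\subset\bar U$. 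Passing to interiors (and using that a convex set with nonempty interior equals the interior of its closure) yields the claim; connectedness of $\wh U$ is then automatic from convexity.

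\textbf{Structure of $A$ and of $\partial\wh U$.} Since $\bar{\wh U}$ is compact convex in $\BR^2$, Minkowski's theorem gives $A\subset\bar U$; no extremal point of $\bar{\wh U}$ can lie in the open set~$U$ (any point of an open set is a nontrivial midpoint of nearby points), so $A\subset\partial U$. The decomposition of $\partial\wh U\smallsetminus A$ is then the standard picture for planar convex bodies: every non-extremal boundary point lies in the relative interior of a maximal closed segment in $\partial\wh U$, and both endpoints of such a segment must be extremal (else the segment could be extended, violating maximality); distinct maximal segments are disjoint by the same maximality.

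\textbf{Construction of $\wh\gamma$.} I expect this to be the technically delicate part. A preliminary point is that $A$ is closed in $\BR^2$: in two dimensions a limit of extremal points of a compact convex set is itself extremal, since a non-extremal boundary point sits in the relative interior of a segment of $\partial\wh U$, which is a relatively open condition along~$\partial\wh U$. Hence $I_A=\gamma^{-1}(A)$ is closed and its complement in $[0,1]$ decomposes as an at most countable family of disjoint maximal open intervals $(a_n,b_n)$ with $\gamma(a_n),\gamma(b_n)\in A$ and $\gamma((a_n,b_n))\cap A=\emptyset$. The main obstacle is to check that for each~$n$ the chord $[\gamma(a_n),\gamma(b_n)]$ is exactly one of the maximal segments of $\partial\wh U\smallsetminus A$, that distinct indices give distinct segments, and that the correspondence respects cyclic order. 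Here I would argue that, since the arc $\gamma((a_n,b_n))$ contains no extremal points, it cannot meet two different maximal segments of $\partial\wh U$; combining this with the previous paragraph, this arc lies in $\wh U$ together with at most one maximal segment of $\partial\wh U$, and in either case its image is bridged in $\partial\wh U$ by the chord $[\gamma(a_n),\gamma(b_n)]$. A Jordan-curve/winding-number argument, using that $\gamma$ bounds the simply connected region $U\subset\wh U$, then identifies the cyclic order of~$A$ along~$\gamma$ with the cyclic order of the endpoints of the maximal segments along~$\partial\wh U$. With this pairing, continuity of $\wh\gamma$ is immediate from the matching of boundary values, and injectivity follows from injectivity of $\gamma|_{I_A}$, disjointness of the open segments, and disjointness of $A$ from the open segments. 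Finally $\wh U\in\CU$ follows because $\wh U$ is open, bounded (as the convex hull of a bounded set), simply connected, contains $0$, and has boundary traced by the simple closed curve $\wh\gamma$.
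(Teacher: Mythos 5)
Your proof follows essentially the same route as the paper's: openness of $\wh U$ and the chord argument for \eqref{E:2.6ww}, extremality considerations to get $A\subset\partial U$, the standard decomposition of a planar convex boundary into extreme points and maximal open segments, and the piecewise-linear definition of $\wh\gamma$ on the complement of $I_A$. The main difference is one of detail in the final step --- you explicitly verify that $A$ is closed and flag that pairing the components of $[0,1]\smallsetminus I_A$ with the maximal segments of $\partial\wh U\smallsetminus A$ requires a small planar-topology argument matching the cyclic order of $A$ along $\gamma$ with that along $\partial\wh U$, a point the paper dispatches in a single sentence (``$\wh\gamma$ \dots parametrizes $\partial\wh U$'') without elaboration.
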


\begin{proofsect}{Proof}
The fact that $\wh U$ is open and connected follows directly from the definitions (and the fact that~$U$ is open). To get \eqref{E:2.6ww} we note that if~$x\in\wh U$ is written as $x=ty+(1-t)z$ for some $y,z\in U$ and~$t\in[0,1]$, then by extending the linear segment $[y,z]$ to the first intersections with~$\partial U$ we can write~$x$ the same way but now with $y,z\in\partial U$.

Let $\text{cl}(B)$ denote the closure of a set~$B\subset\BR^2$ and let $A$ denote the set of extremal points of~$\text{cl}(\wh U)$. Pick $x\in A$ and note that~$x\in\partial\wh U$ implies, by the fact that $U$ is open and $U\subset\wh U$, that~$x\notin U$. But any point in~$\text{cl}(\wh U)$ is in the convex hull of~$\text{cl}(U)$ and so $x\not\in\partial U$ would imply that~$x$ is a non-trivial convex combination of distinct elements~$y,z\in\text{cl}(U)$. 

To finish the proof, note that any $x\in\partial\wh U\smallsetminus A$ must be written as a non-trivial convex combination of \emph{exactly two} distinct elements from~$A$ (otherwise~$x$ would be in the interior of~$\wh U$) and so it lies in an open linear segment connecting these two points. By continuity, $I_A$ is thus closed and~$\wh\gamma$ defined above parametrizes~$\partial\wh U$. The fact that~$\wh\gamma$ is simple is a consequence of convexity of~$\wh U$.
\end{proofsect}

This now permits us to show:

\begin{lemma}
\label{lemma-2.4}
Let $U\in\CU$ and let $\wh U$ be the convex hull of~$U$. Then, for the perimeter defined using any norm on~$\BR^2$, we have $\CP(\wh U)\le\CP(U)$.   
\end{lemma}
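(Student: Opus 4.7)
The plan is to show that any polygonal approximation of $\partial\wh U$ can be refined into a polygonal approximation of $\partial U$ of exactly the same $\rho$-length, using the structural description of $\partial\wh U$ in Lemma~\ref{lemma-2.3}. Since $\text{cl}(\wh U)$ is a compact convex set with nonempty interior, its extremal set $A$ is nonempty, so we may assume the parametrization $\gamma$ has been chosen so that $\gamma(0)=\gamma(1)\in A$; equivalently, $0,1\in I_A$. Fix any partition $0=t_0<t_1<\cdots<t_n=1$ of $[0,1]$.

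For each index $i$ with $t_i\notin I_A$, put
\begin{equation*}
\alpha_i:=\sup\{t\in I_A\colon t\le t_i\},\qquad\beta_i:=\inf\{t\in I_A\colon t\ge t_i\}.
\end{equation*}
Since $I_A$ is closed and contains $0,1$, these belong to $I_A$ and satisfy $\alpha_i<t_i<\beta_i$; by Lemma~\ref{lemma-2.3}, $\wh\gamma$ is affine on $[\alpha_i,\beta_i]$ with image the closed segment from $\wh\gamma(\alpha_i)$ to $\wh\gamma(\beta_i)$ inside $\partial\wh U$. Let $S$ denote the ordered partition of $[0,1]$ obtained from $\{t_0,\dots,t_n\}$ by adjoining all such $\alpha_i$ and $\beta_i$, and set $S':=S\cap I_A$.

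The key observation is that the $\wh\gamma$-polygonal $\rho$-length over the original partition equals the $\gamma$-polygonal $\rho$-length over $S'$. Indeed, passing from $\{t_i\}$ to $S$ only inserts points whose $\wh\gamma$-images are collinear with, and lie between, their new neighbours; by the norm identity $\rho(x-z)=\rho(x-y)+\rho(y-z)$ valid when $y$ lies on the segment from $x$ to $z$, the total $\rho$-length is unchanged. The same identity then lets us delete every $t_i\notin I_A$ from $S$ one at a time: at each deletion, the two neighbours of $t_i$ in the current set lie, together with $\wh\gamma(t_i)$, on the same linear segment of $\partial\wh U$. Finally $\wh\gamma=\gamma$ on $I_A\supset S'$, so the resulting sum is a genuine $\gamma$-polygonal approximation of $\partial U$ and is therefore bounded above by $\CP(U)$. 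Taking the supremum over partitions of $[0,1]$ yields $\CP(\wh U)\le\CP(U)$.

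The main obstacle is the bookkeeping when several consecutive $t_i$ fall inside a single maximal gap of $I_A$: one must verify that their $\wh\gamma$-images appear in the correct order along the single linear segment between the endpoints of that gap, and that the various $\alpha_i,\beta_i$ collapse consistently so that $S$ and $S'$ are genuine ordered partitions. Both facts follow immediately from the affinity of $\wh\gamma$ on each such gap, which makes $\wh\gamma$ monotone along the corresponding segment, so the ordering of the parameter values matches the ordering of their images.
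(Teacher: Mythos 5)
Your approach is essentially the same as the paper's (reduce an arbitrary partition for $\wh\gamma$ to one supported on $I_A$, then use that $\wh\gamma=\gamma$ there), and the conclusion is reached. However, there is a false intermediate claim: you assert that passing from $\{t_i\}$ to $S$ leaves the polygonal $\rho$-length \emph{unchanged} because the inserted points $\alpha_i,\beta_i$ have $\wh\gamma$-images ``collinear with, and lying between, their new neighbours.'' This is not true for $\alpha_i$ and $\beta_i$. Take $t_i\notin I_A$ with $t_{i-1}\in I_A$ and $t_{i-1}<\alpha_i$: the point $\wh\gamma(\alpha_i)$ is an endpoint of the linear facet containing $\wh\gamma(t_i)$, while $\wh\gamma(t_{i-1})$ lies elsewhere on $\partial\wh U$, so $\wh\gamma(\alpha_i)$ is generally \emph{not} on the chord $[\wh\gamma(t_{i-1}),\wh\gamma(t_i)]$. (Consider the triangle with a partition that puts one $t_i$ in the interior of each edge: the original sum uses interior chords and is strictly smaller than the perimeter, but $S'$ gives exactly the perimeter.) The correct statement is that inserting $\alpha_i,\beta_i$ can only \emph{increase} the sum, by the triangle inequality. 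Fortunately, this is the direction you need: the chain
\begin{equation*}
\text{(sum over }\{t_i\}\text{)}\;\le\;\text{(sum over }S\text{)}\;=\;\text{(sum over }S'\text{)}\;\le\;\CP(U)
\end{equation*}
still goes through, with equality only at the deletion step (where the deleted $t_i\notin I_A$ genuinely lies on the facet between its neighbours in $S$). So the argument is repairable by replacing your claimed equality with an inequality; as written, the key step is mis-justified. This is exactly the care the paper takes when it says adding the facet endpoints ``does not decrease the sum'' while dropping the interior point ``leaves the sum unchanged.'' Your observation about normalizing the parametrization so that $\gamma(0)=\gamma(1)\in A$ is a nice detail that the paper leaves implicit.
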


\begin{proofsect}{Proof}
Let $\gamma$ be a curve parametrizing~$\partial U$ and let the set~$A$ and curve $\wh\gamma$ be as in Lemma~\ref{lemma-2.3}. First we claim that $\CP(\wh U)$ can be computed by considering only partitions such that $\gamma(t_i)\in A$ for each~$i$. Indeed, consider a partition such that~$\wh\gamma(t_i)$ is not extremal in $\text{cl}(U)$ for some~$i$. Then, by Lemma~\ref{lemma-2.3},~$\wh\gamma(t_i)$ lies in the interior of a linear segment on~$\partial \wh U$. Adding the end-points of this segment to the partition does not decrease the sum in \eqref{E:1.12} but once that is done, dropping the point $\wh\gamma(t_i)$ leaves the sum unchanged. Proceeding inductively, every partition can be turned into a partition on extremal points without decreasing the sum in \eqref{E:1.12}. 

Now pick $\epsilon>0$ small and $M>0$ large and let $\{t_i\}$ be a partition such that 
\begin{equation}
\sum_{i=1}^n\rho\bigl(\wh\gamma(t_i)-\wh\gamma(t_{i-1})\bigr)\ge M\wedge\bigl(\CP(\wh U)-\epsilon).
\end{equation}
By the above argument, we may assume that $\wh\gamma(t_i)\in A$ for all $i$. But then $\gamma(t_i)=\wh\gamma(t_i)$ by the definition of~$\wh\gamma$ and so the sum on the left is no more than~$\CP(U)$. Hence, $\CP(\wh U)\le\CP(U)$.
\end{proofsect}

\begin{proofsect}{Proof of Proposition~\ref{prop-2}}
By the variational characterization of the principal eigenvalue we have $\lambda(\wh U)\le\lambda(U)$. The claim then follows from Lemma~\ref{lemma-2.4}.
\end{proofsect}

\subsection{Minimizer among convex sets: existence}
We will now proceed to prove the existence of a minimizer of~$U\mapsto\CF(U)$ in the class of convex sets. The arguments in this section are quite standard --- we basically just need to prove lower semicontinuity of~$U\mapsto\CF(U)$ over the class of convex sets. Still, we provide detailed proofs as we do not find all of them satisfactorily worked out in the literature.

\begin{proposition}
\label{lem:exist}
There exists a convex set $U\in\CU$ such that $\CF(U)=\inf\{\CF(U')\colon U'\in\CU\}$. 
\end{proposition}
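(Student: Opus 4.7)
\medskip\noindent
\textit{Proof proposal.} The plan is to extract a minimizing sequence of convex sets with good compactness properties and then show lower semicontinuity of $\CF$ under Hausdorff convergence. By Proposition~\ref{prop-2} it suffices to minimize over convex elements of~$\CU$, and Lemma~\ref{lemma-2.1} tells us we may further restrict to convex $U\in\CU$ satisfying the two-sided bounds \eqref{E:2.2}. Take a minimizing sequence $(U_n)$ within this restricted class. Since each~$U_n$ is convex with uniformly bounded Euclidean perimeter, its diameter is uniformly bounded; by translating (which affects neither $\lambda$ nor $\CP$) we may assume all~$U_n$ lie inside a fixed compact ball~$B\subset\BR^2$. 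The Blaschke selection theorem then yields a subsequence along which the closures $\overline{U_n}$ converge in Hausdorff metric to a convex compact set~$K\subset B$.

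Define $U_0:=\Int(K)$. Convexity of~$K$ makes $U_0$ open and convex; the uniform lower bound $|U_n|\ge c_1$ together with Hausdorff convergence forces $|K|\ge c_1$, so~$U_0$ is non-empty, and for a planar convex open set this guarantees that $\partial U_0=\partial K$ is the trace of a simple closed curve, hence~$U_0$ (after a further translation placing~$0$ inside) belongs to~$\CU$. It remains to verify
\begin{equation}
\CF(U_0)\le\liminf_{n\to\infty}\CF(U_n).
\end{equation}

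For the eigenvalue part, on convex planar domains converging in Hausdorff metric to a convex domain with non-empty interior, the principal Dirichlet eigenvalue is actually continuous; this is a consequence of the two-dimensional result of \v Sver\'ak~\cite{Sverak1993} (which will be used again in the proof of Theorem~\ref{thm-1}), but for convex sets it also follows from a direct comparison argument using that, for Hausdorff-close convex sets~$U$ and~$V$ with comparable inradii, one can rescale~$V$ by a factor $1+o(1)$ about an interior point to sit inside~$U$ and invoke the monotonicity plus scaling \eqref{E:scaling} of~$\lambda$. This yields $\lambda(U_0)=\lim_n\lambda(U_n)$. For the perimeter, a short argument using the curves $\gamma_n$ parametrizing~$\partial U_n$ shows that for convex sets the $\rho$-perimeter is lower semicontinuous under Hausdorff convergence: any polygonal inscribed partition of~$\partial U_0$ may be approximated arbitrarily well by inscribed polygonal partitions of~$\partial U_n$, whose total $\rho$-length is bounded by~$\CP(U_n)$. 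Combining these two facts gives $\CF(U_0)\le\liminf\CF(U_n)=\inf_{\CU}\CF$, so~$U_0$ is a minimizer.

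The main obstacle is making sure the candidate limit~$U_0$ belongs to~$\CU$ (it could a priori degenerate to a point or a segment) and pinning down the semicontinuity of~$\lambda$ on the Hausdorff-convergent sequence. Both points are handled above by exploiting the uniform lower bound on $|U_n|$ from Lemma~\ref{lemma-2.1} and the specifically two-dimensional fact that convex sets with non-empty interior automatically have Jordan-curve boundary; the eigenvalue semicontinuity then reduces to an elementary inner/outer rescaling argument for convex sets and does not require the full strength of \v Sver\'ak's theorem at this stage.
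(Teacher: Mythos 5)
Your proposal follows essentially the same route as the paper's proof of Proposition~\ref{lem:exist}: reduce to convex sets via Proposition~\ref{prop-2}, use the a priori bounds of Lemma~\ref{lemma-2.1} to get a uniformly bounded minimizing sequence containing uniformly large discs, extract a Hausdorff-convergent subsequence by Blaschke selection, confirm the limit stays in~$\CU$, and then combine continuity of~$\lambda$ (proved by the same inner/outer rescaling argument that underlies the paper's Lemma~\ref{lemma-3.6u} and Lemma~\ref{lemma-3.7vv}) with lower semicontinuity of~$\CP$ on convex sets (the same inscribed-polygon argument as Lemma~\ref{lemma-3.7u}). The only cosmetic difference is that you center at the end rather than shifting each~$U_n$ at the outset, which the paper does so that the Hausdorff limit automatically contains a disc about the origin; both are fine.
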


We begin with a simple observation, whose proof is probably well known:

\begin{lemma}
\label{lemma-3.6u}
Let $U,V\in\CU$ be convex and (recalling that $0\in U,V$) let $\dist_2(0,\partial U)$ denote the Euclidean distance of~$0$ to~$\partial U$. Then
\begin{equation}
\label{E:3.9ue}
(1-\epsilon)U\subseteq V\subseteq(1+\epsilon)U
\end{equation}
holds for any $\epsilon>\distH(U,V)/\dist_2(0,\partial U)$ with~$\epsilon<1$.
\end{lemma}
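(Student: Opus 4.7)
The plan is to combine two ingredients: (i) since $U$ is open and convex with $0\in U$, the ball $B(0,r):=\{x\colon|x|_2<r\}$ with $r:=\dist_2(0,\partial U)$ is contained in $U$; (ii) the Hausdorff closeness of $V$ to $U$. Set $\alpha:=\distH(U,V)$, so $\alpha<\epsilon r$ by hypothesis, and fix $\delta>0$ small enough that both $\alpha+\delta<\epsilon r$ and $(1-\epsilon)(\alpha+\delta)<\epsilon(r-\alpha)$ hold (the latter reducing to $\alpha+(1-\epsilon)\delta<\epsilon r$, still achievable since $\alpha<\epsilon r$).

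\medskip\noindent\textbf{Step 1: $V\subseteq(1+\epsilon)U$.} Given $y\in V$, the Hausdorff bound yields $u\in U$ with $|y-u|_2<\alpha+\delta$. Writing
\[ \frac{y}{1+\epsilon}=\frac{1}{1+\epsilon}\, u+\frac{\epsilon}{1+\epsilon}\cdot\frac{y-u}{\epsilon} \]
exhibits $y/(1+\epsilon)$ as a convex combination of $u\in U$ and a vector of Euclidean norm $<(\alpha+\delta)/\epsilon<r$, which belongs to $B(0,r)\subseteq U$. Convexity of $U$ then gives $y/(1+\epsilon)\in U$, i.e., $y\in(1+\epsilon)U$.

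\medskip\noindent\textbf{Step 2: $B(0,r-\alpha)\subseteq V$.} Pick $p\in\partial V$ of minimal Euclidean norm (which exists since $\partial V$ is compact), so $|p|_2=\dist_2(0,\partial V)$ and $B(0,|p|_2)\subseteq V$ by openness of $V$. Any supporting half-space $\{x\colon x\cdot n\le p\cdot n\}$ of $\overline V$ at $p$ must contain $B(0,|p|_2)$; taking the supremum over this ball gives $|p|_2\,|n|_2\le p\cdot n$, whence Cauchy--Schwarz forces $n$ to be a positive multiple of $p$. Therefore $\overline V\subseteq\{x\colon x\cdot p\le|p|_2^2\}$. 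If $|p|_2<r$, then for any $\epsilon'\in(0,r-|p|_2)$ the point $q:=p+\epsilon' p/|p|_2$ lies in $B(0,r)\subseteq U$, while the half-space inclusion gives $\inf_{v\in V}|q-v|_2\ge\epsilon'$; the Hausdorff bound forces $\epsilon'\le\alpha$, and letting $\epsilon'\uparrow r-|p|_2$ yields $|p|_2\ge r-\alpha$. Hence $\dist_2(0,\partial V)\ge r-\alpha$, and since $V$ is open the claim follows.

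\medskip\noindent\textbf{Step 3: $(1-\epsilon)U\subseteq V$.} Given $x\in U$, choose $v\in V$ with $|x-v|_2<\alpha+\delta$ and write
\[ (1-\epsilon)x=(1-\epsilon)\, v+\epsilon\cdot\frac{(1-\epsilon)(x-v)}{\epsilon}, \]
a convex combination of $v\in V$ and a vector of Euclidean norm $<(1-\epsilon)(\alpha+\delta)/\epsilon<r-\alpha$ (by the choice of $\delta$), hence lying in $B(0,r-\alpha)\subseteq V$ by Step 2. Convexity of $V$ then gives $(1-\epsilon)x\in V$. The main obstacle is Step 2, where the merely qualitative information $0\in V$ must be upgraded to a quantitative lower bound on the Euclidean inradius of $V$; once the ball $B(0,r-\alpha)\subseteq V$ is in hand, the inner inclusion follows by the same convex-combination trick used for the outer one.
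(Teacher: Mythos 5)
Your proof is correct and rests on the same two ingredients as the paper's: the inradius bound obtained via a supporting half-plane at the closest boundary point (your Step 2 is the same geometric argument the paper uses to prove $r_{V+B(\delta)}\le r_V+\delta$), and the convex-combination trick of absorbing the Hausdorff error into a ball contained in the set. The only minor structural difference is that you derive the inner inclusion $(1-\epsilon)U\subseteq V$ directly from the lower bound $B(0,r-\alpha)\subseteq V$, whereas the paper obtains it by applying the outer inclusion with the roles of $U$ and $V$ swapped and then converting the scaling factor via the inradius inequality; both routes are correct and of comparable length.
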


\begin{proofsect}{Proof}
For any~$r>0$, let $B(r)$ denote the open Euclidean ball of radius~$r$ centered at~$0$ and pick~$\delta>\distH(U,V)$. The definition of the Hausdorff distance \eqref{E:Hausdorff} implies
\begin{equation}
\label{E:3.10ue}
V\subseteq U+B(\delta)\quad\text{and}\quad U\subseteq V+B(\delta).
\end{equation}
The inequality on the right is then nearly immediate: Abbreviate $r_U:=\dist_2(0,\partial U)$. Then
\begin{equation}
V\subseteq U+B(\delta)= U+\delta r_U^{-1} B(r_U)\subseteq U+\delta r_U^{-1} U,
\end{equation}
where we used that $B(r_U)\subseteq U$.

For the inequality on the left, we will first prove that, for any $\delta>0$,
\begin{equation}
\label{E:3.11ue}
r_{V+B(\delta)}\le r_V+\delta.
\end{equation}
(In fact, equality holds here and is easy to prove, but inconsequential for us.) Indeed, let~$x_0$ be the point on~$\partial V$ such that~$r_V$ equals the Euclidean norm~$|x_0|$. Set $\BH:=\{x\in\BR^2\colon x\cdot x_0<r_V^2\}$. The convexity of~$V$ then forces $V\subseteq\BH$. Hence $V+B(\delta)\subseteq\BH+B(\delta)$ and so $r_{V+B(\delta)}$ is at most the distance of~$0$ to the boundary of $\BH+B(\delta)$ and \eqref{E:3.11ue} thus holds.

To finish the proof of the main claim, let again $\delta>\distH(U,V)$ but now assume also $\delta<r_U$. (This is possible else there is no~$\epsilon$ to apply the statement to.) By the inclusion on the right of~\eqref{E:3.9ue} (which we already proved above),
\begin{equation}
\label{E:3.12ue}
U\subseteq (1+\delta r_V^{-1})V.
\end{equation}
But the second inclusion in \eqref{E:3.10ue} gives $r_U\le r_{V+B(\delta)}$ and from \eqref{E:3.11ue} we then get $r_U\le r_V+\delta$. This yields
\begin{equation}
1+\frac{\delta}{r_V}\le1+\frac{\delta}{r_U-\delta}=\frac1{1-\frac\delta{r_U}}
\end{equation}
and, by a simple rescaling of \eqref{E:3.12ue}, we then get the inclusion on the left \eqref{E:3.9ue} as well.
\end{proofsect}

We remark that \eqref{E:3.11ue} is easily invalidated for~$U$ or $V$ non-convex. As a consequence of Lemma~\ref{lemma-3.6u}, we get:

\begin{lemma}
\label{lemma-3.7vv}
Let $\{U_n\}\subset\CU$ and $U\in\CU$ be convex sets. Then
\begin{equation}
\distH(U_n,U)\,\underset{n\to\infty}\longrightarrow\,0\quad\Rightarrow\quad
\lim_{n\to\infty}\lambda(U_n)=\lambda(U).
\end{equation}
\end{lemma}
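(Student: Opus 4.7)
\smallskip\noindent
\textit{Proof proposal.} The plan is to use the sandwich bound from Lemma~\ref{lemma-3.6u} together with the monotonicity and scaling of the principal eigenvalue to squeeze $\lambda(U_n)$ between two dilates of $\lambda(U)$ that both tend to~$\lambda(U)$. Since each $U_n\in\CU$ and $U\in\CU$ contains the origin (with $U$ open so that $r:=\dist_2(0,\partial U)>0$), Lemma~\ref{lemma-3.6u} applies: for any $\epsilon\in(0,1)$ with $\epsilon > \distH(U_n,U)/r$, we get the inclusions
\begin{equation}
(1-\epsilon)U\subseteq U_n\subseteq (1+\epsilon)U.
\end{equation}
Since $\distH(U_n,U)\to 0$, any fixed $\epsilon\in(0,1)$ is admissible for all $n$ large enough.

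Next I would invoke the monotonicity of $\lambda$ with respect to set inclusion, which is immediate from the variational characterization~\eqref{E:1.4} (any admissible test function for the smaller set is admissible for the larger), and combine it with the scaling relation $\lambda(tW)=t^{-2}\lambda(W)$ from~\eqref{E:scaling}. This yields
\begin{equation}
(1+\epsilon)^{-2}\lambda(U)=\lambda\bigl((1+\epsilon)U\bigr)\le\lambda(U_n)\le\lambda\bigl((1-\epsilon)U\bigr)=(1-\epsilon)^{-2}\lambda(U)
\end{equation}
for all $n$ sufficiently large. Letting $n\to\infty$ and then $\epsilon\downarrow 0$ gives $\lambda(U_n)\to\lambda(U)$, as desired.

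There is no real obstacle here; the lemma is essentially a corollary of the preceding sandwich estimate. The only point worth double-checking is the hypothesis of Lemma~\ref{lemma-3.6u}, namely that the roles of $U$ and $V$ in the bound $\epsilon>\distH(U,V)/\dist_2(0,\partial U)$ are set up with $U$ fixed (so that $\dist_2(0,\partial U)$ is a positive constant, independent of $n$); this is exactly what we have. Convexity is used in two essential places: it is needed to apply Lemma~\ref{lemma-3.6u} to both $U$ and $U_n$, and (implicitly) to ensure that the dilates $(1\pm\epsilon)U$ of the reference set $U$ behave monotonically under inclusion in the obvious way. Note that this argument is specific to convex sets: for general Hausdorff-convergent sequences, $\lambda$ need not be continuous, which is why the specialized two-dimensional result of \v Sver\'ak is invoked elsewhere in the paper.
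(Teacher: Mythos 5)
Your proof follows exactly the paper's argument: invoke Lemma~\ref{lemma-3.6u} to sandwich $U_n$ between the dilates $(1-\epsilon)U$ and $(1+\epsilon)U$ for $n$ large, then use the monotonicity of $\lambda$ under inclusion together with the scaling relation~\eqref{E:scaling} to conclude. Your write-up merely spells out the monotonicity step and the limiting procedure a bit more explicitly than the paper does, but the approach is the same.
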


\begin{proofsect}{Proof}
This statement is well known, e.g., Henrot~\cite[Theorem~2.3.15]{henrot}, but we provide a proof as it is easy and instructive: Lemma~\ref{lemma-3.6u} implies that, for each~$\epsilon>0$ small enough,
\begin{equation}
(1-\epsilon)U\subset U_n\subset(1+\epsilon)U
\end{equation}
once~$n$ is large. Using this in \eqref{E:1.4} along with the scaling relation \eqref{E:scaling} shows~$\lambda(U_n)\to\lambda(U)$.
\end{proofsect}

For the perimeter we only get a statement of lower semicontinuity with respect to Hausdorff convergence (on convex sets). For the sake of later applications, we will also allow the underlying norm to vary with~$n$.

\begin{lemma}
\label{lemma-3.7u}
Let $\rho_n$ be a sequence of norms converging, if regarded as continuous functions on the unit circle in~$\BR^2$, uniformly to a norm~$\rho$. Let~$\CP_n$, resp.,~$\CP$ denote the perimeter with respect to~$\rho_n$, resp.,~$\rho$, and let $\{U_n\}\subset\CU$ and $U\in\CU$ be convex sets. Then
\begin{equation}
\label{E:3.17uq}
\distH(U_n,U)\,\underset{n\to\infty}\longrightarrow\,0\quad\Rightarrow\quad
\CP(U)\le\liminf_{n\to\infty}\CP_n(U_n).
\end{equation}
Moreover, for any $U\in\CU$ convex,
\begin{equation}
\label{E:3.18uq}
\lim_{n\to\infty}\CP_n(U) = \CP(U).
\end{equation}
\end{lemma}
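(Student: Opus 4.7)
The plan is to treat the two claims separately: the pointwise convergence \eqref{E:3.18uq} reduces to a norm-comparison estimate, while the lower semicontinuity \eqref{E:3.17uq} requires a geometric argument for transporting partition vertices from $\partial U$ to $\partial U_n$. For \eqref{E:3.18uq}, one-homogeneity of each norm gives the pointwise estimate $|\rho_n(v)-\rho(v)|\le\omega_n|v|_2$ for every $v\in\BR^2$, where $\omega_n:=\sup_{|u|_2=1}|\rho_n(u)-\rho(u)|\to 0$. Since $U$ is convex, its Euclidean perimeter $L:=|\partial U|$ is finite (cf.\ Lemma~\ref{lemma-2.1}), so every partition satisfies $\sum_i |\gamma(t_i)-\gamma(t_{i-1})|_2\le L$. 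Summing the pointwise bound across any partition and taking suprema on both sides then yields $|\CP_n(U)-\CP(U)|\le\omega_n L\to 0$.

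For \eqref{E:3.17uq}, I would fix $\eta>0$ and pick a partition $0=t_0<\dots<t_k=1$ of a parametrization $\gamma$ of $\partial U$ with $\sum_{i=1}^k\rho(\gamma(t_i)-\gamma(t_{i-1}))\ge\CP(U)-\eta$, and then transport each vertex $x_i:=\gamma(t_i)$ radially from the origin onto $\partial U_n$. Because $0\in U$ and $U$ is open, there exists $r>0$ with $B(r)\subseteq U$, and the inclusions $(1-\epsilon_n)U\subseteq U_n\subseteq(1+\epsilon_n)U$ supplied by Lemma~\ref{lemma-3.6u} (for some $\epsilon_n\to 0$) force $B((1-\epsilon_n)r)\subseteq U_n$. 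For $n$ large, both $\partial U$ and $\partial U_n$ are therefore graphs of positive radial functions $R_U,R_{U_n}\colon[0,2\pi)\to(0,\infty)$ around the origin, and the two-sided inclusion immediately yields $\|R_{U_n}-R_U\|_\infty\to 0$. Setting $\theta_i$ to be the angular coordinate of $x_i$ and $x_i^{(n)}:=R_{U_n}(\theta_i)(\cos\theta_i,\sin\theta_i)\in\partial U_n$, the points $x_i^{(n)}$ satisfy $x_i^{(n)}\to x_i$ and appear in the same cyclic order on $\partial U_n$ as the $x_i$ do on $\partial U$, hence form an admissible partition. From
\begin{equation*}
\CP_n(U_n)\ge\sum_{i=1}^k\rho_n\bigl(x_i^{(n)}-x_{i-1}^{(n)}\bigr),
\end{equation*}
combined with $|\rho_n(v)-\rho(v)|\le\omega_n|v|_2$ and continuity of $\rho$, the right-hand side converges to $\sum_{i=1}^k\rho(x_i-x_{i-1})\ge\CP(U)-\eta$; taking $\liminf_n$ followed by $\eta\downarrow 0$ proves \eqref{E:3.17uq}.

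The main obstacle is precisely the cyclic-ordering step: the approximating vertices $x_i^{(n)}$ must lie on $\partial U_n$ in the correct order along the curve, not merely be individually close to the $x_i$ in $\BR^2$. For non-convex sets, Hausdorff-close boundaries may shuffle nearby points, and a cruder recipe such as ``pick any nearby point on $\partial U_n$'' would fail to produce a valid partition. The polar representation around an interior point bypasses this issue cleanly --- closeness, cyclic order, and a boundary parametrization all arise simultaneously --- but it relies essentially on convexity of $U$ and $U_n$ together with the origin staying uniformly inside, both of which Lemma~\ref{lemma-3.6u} provides.
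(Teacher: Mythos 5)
Your proposal is correct and follows essentially the same route as the paper: the paper likewise parametrizes $\partial U$ and $\partial U_n$ by the polar angle from the origin (which is exactly your $R_U,R_{U_n}$ construction), uses Lemma~\ref{lemma-3.6u} to get uniform closeness of corresponding boundary points, passes to the limit in the finite partition sum, and finishes by taking the supremum over partitions; for \eqref{E:3.18uq} the paper uses the multiplicative form $(1-\epsilon)\rho\le\rho_n\le(1+\epsilon)\rho$ of uniform convergence rather than your additive bound, a cosmetic difference. The ``cyclic ordering'' concern you flag is handled identically in the paper by the shared polar-angle parametrization.
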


\begin{proofsect}{Proof}
Let $\gamma\colon[0,1]\to\BR^2$ be the curve parametrizing~$\partial U$ using the polar angle (in units of $2\pi$) as viewed from~$0$, and let $\gamma_n$ be the corresponding parametrization of~$\partial U_n$. Pick a collection of points $0=t_0<t_1<\dots<t_m=1$. Lemma~\ref{lemma-3.6u} shows
\begin{equation}
\max_{i=0,\dots,m}\,\rho_n\bigl(\gamma_n(t_i)-\gamma(t_i)\bigr)\le\frac{2c}r\distH(U_n,U),
\end{equation}
for any~$r$ with $0<r<\dist_2(0,\partial U)$, where~$c:=\sup_{n\ge1}\sup\{\rho_n(x)\colon |x|_2=1\}$. Hence, the convergence $\distH(U_n,U)\to0$ implies
\begin{equation}
\liminf_{n\to\infty}\CP_n(U_n)\ge\liminf_{n\to\infty}\,\sum_{i=1}^m\rho_n\bigl(\gamma_n(t_i)-\gamma_n(t_{i-1})\bigr)
=\sum_{i=1}^m\rho\bigl(\gamma(t_i)-\gamma(t_{i-1})\bigr).
\end{equation}
Taking supremum over $\{t_i\}$ then yields \eqref{E:3.17uq}.

For the second part of the claim, the convergence of~$\rho_n$ to~$\rho$ on the unit circle is actually uniform which after a moment's though gives that, for each~$\epsilon>0$,
\begin{equation}
(1-\epsilon)\rho(x)\le\rho_n(x)\le(1+\epsilon)\rho(x),\qquad x\in\BR^2,
\end{equation}
holds as soon as~$n$ is sufficiently large. This implies
\begin{equation}
(1-\epsilon)\CP(U)\le\CP_n(U)\le(1+\epsilon)\CP(U)
\end{equation}
for~$n$ large thus proving also \eqref{E:3.18uq}.
\end{proofsect}

\begin{proofsect}{Proof of Proposition~\ref{lem:exist}}
Let $\{U_n\}\subset\CU$ be a minimizing sequence, i.e.,
\begin{equation}
\CF(U_n)\,\underset{n\to\infty}\longrightarrow\,\inf\{\CF(U)\colon U\in\CU\}
\end{equation}
 By Lemma~\ref{lemma-2.1} we may assume that~$\{U_n\}$ obey the bounds in \eqref{E:2.2} and, in particular, are uniformly bounded. Moreover, by Proposition~\ref{prop-2} we may assume that~$U_n$ are convex. By Lemma~\ref{lemma-2.1} there is an $r>0$ such that each $U_n$ contains an open disc of radius $r$ (otherwise~$|\partial U_n|$ is larger than order $|U|/r$). Applying a suitable shift to each~$U_n$ we may thus assume $\dist_2(0,\partial U_n)\ge r$ for each~$n$.

Reducing to a subsequence if necessary, the Blaschke Selection Theorem (or a direct argument using the polar-angle parameterization used in the proof of Lemma~\ref{lemma-3.7u}) permits us to assume that~$\{U_n\}$ converge in the Hausdorff distance to an open (by our choice) set~$U$. Clearly,~$U$ is also convex and thus connected and, by the above centering assumption, $0\in U$. Lemmas~\ref{lemma-3.7vv}--\ref{lemma-3.7u} then give $\CF(U)\le\liminf_{n\to\infty}\CF(U_n)$. Hence, $U$ is a minimizer.
\end{proofsect}

\subsection{Minimizer among convex sets: uniqueness}
Since we have already proved the existence of a minimizer, we will henceforth write $\min\CF$ for the infimum in the statement of Proposition~\ref{lem:exist}. Our next task is to show that the convex minimizer is actually unique modulo translation:

\begin{proposition}
\label{prop-3}
There is a convex set $U_0\in\CU$ such that
\begin{equation}
\bigl\{U\in\CU\colon \text{\rm convex},\,\,\CF(U)=\min\CF\bigr\}=\bigl\{x+U_0\colon -x\in U_0\bigr\}
\end{equation}
\end{proposition}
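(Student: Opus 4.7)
The plan is to exploit the strict convexity of $\lambda$ and affinity of $\CP$ under Minkowski addition on convex planar sets (Lemmas~\ref{lemma-2.9} and~\ref{lem:parconv}, which have been advertised in Section~2.2). Fix a convex minimizer $U_0 \in \CU$ supplied by Proposition~\ref{lem:exist} and let $V \in \CU$ be any other convex minimizer. The Minkowski average $W := \tfrac{1}{2}U_0 + \tfrac{1}{2}V$ is open, convex, bounded, and contains $0$ (since $0\in U_0\cap V$); as every bounded open convex set in $\BR^2$ is a Jordan domain, $W \in \CU$. Applying the two lemmas,
\begin{equation*}
\min\CF \,\le\, \CF(W) \,\le\, \tfrac12\bigl[\lambda(U_0)+\lambda(V)\bigr] + \tfrac12\bigl[\CP(U_0)+\CP(V)\bigr] \,=\, \min\CF,
\end{equation*}
so equality holds throughout, and in particular in the eigenvalue convexity inequality. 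The strict clause of Lemma~\ref{lemma-2.9} then forces $U_0$ and $V$ to be \emph{homothetic}: $V = x + tU_0$ for some $x \in \BR^2$ and $t > 0$.

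To pin down $t$, I note that for any $s > 0$ the set $sU_0$ also lies in $\CU$, so by the scaling relations \eqref{E:scaling} the function
\begin{equation*}
\phi(s) \,:=\, \CF(sU_0) \,=\, s^{-2}\lambda(U_0) + s\,\CP(U_0)
\end{equation*}
satisfies $\phi(s) \ge \min\CF$ on $(0,\infty)$. Since $\phi''(s) = 6\,s^{-4}\lambda(U_0) > 0$, $\phi$ is strictly convex and therefore attains its minimum value at a single point. Now $\phi(1) = \CF(U_0) = \min\CF$ and, by the translation invariance of both $\lambda$ and $\CP$, also $\phi(t) = \CF(tU_0) = \CF(V) = \min\CF$; hence $t = 1$. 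Thus $V = x + U_0$, and $0 \in V$ gives $-x \in U_0$, which establishes the claimed parametrization of the set of convex minimizers.

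The main potential obstacle is the strict convexity of $\lambda$ under Minkowski addition, a Brunn--Minkowski-type fact furnished by Colesanti's work and restated as Lemma~\ref{lemma-2.9}; everything downstream of it is essentially bookkeeping. A minor verification is that $W\in\CU$ (i.e., that the Minkowski sum of two convex planar Jordan domains is a Jordan domain), but this is automatic from convexity and openness.
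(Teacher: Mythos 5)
Your proof is correct and follows essentially the same route as the paper's: both deduce homothety of any two convex minimizers from the strict convexity of $\lambda$ (Lemma~\ref{lemma-2.9}) combined with the affinity of $\CP$ (Lemma~\ref{lem:parconv}) under Minkowski interpolation, and then pin down the scaling factor $t=1$ via the strict convexity of $s\mapsto s^{-2}\lambda(U_0)+s\,\CP(U_0)$. The only cosmetic differences are that you state the first step by running the chain of inequalities through the Minkowski average $W$ and invoking the equality case, rather than the paper's equivalent contrapositive phrasing, and you explicitly verify $W\in\CU$ and $0\in W$, which the paper leaves implicit.
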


As mentioned earlier, the argument will be based on convexity, but this time with respect to convex combinations based on Minkowski addition of sets. For the eigenvalue, we get:

\begin{lemma}
\label{lemma-2.9}
For any bounded non-empty open convex sets $U,V\subset\BR^2$ and any $\alpha\in[0,1]$,
\begin{equation}
\lambda\bigl(\alpha U+(1-\alpha)V\bigr)\le\alpha\lambda(U)+(1-\alpha)\lambda(V).
\end{equation}
Moreover, assuming~$\alpha\in(0,1)$, equality holds only if~$U$ and~$V$ are homothetic.
\end{lemma}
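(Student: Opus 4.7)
The plan is to reduce the claimed convexity of $\lambda$ to the Brunn--Minkowski-type $\tfrac12$-concavity inequality for the principal Dirichlet eigenvalue due to Brascamp--Lieb/Colesanti: for bounded, open, convex $U,V\subset\BR^2$ and $\alpha\in[0,1]$,
\begin{equation*}
\lambda\bigl(\alpha U+(1-\alpha)V\bigr)^{-1/2}\ge\alpha\,\lambda(U)^{-1/2}+(1-\alpha)\,\lambda(V)^{-1/2},
\end{equation*}
with equality for $\alpha\in(0,1)$ only if $U$ and $V$ are homothetic. Granted this, the stated convexity follows in one line: since $t\mapsto t^{-1/2}$ is strictly convex and strictly decreasing on $(0,\infty)$, Jensen's inequality combined with the displayed bound yields
\begin{equation*}
\lambda\bigl(\alpha U+(1-\alpha)V\bigr)^{-1/2}\ge\alpha\lambda(U)^{-1/2}+(1-\alpha)\lambda(V)^{-1/2}\ge\bigl(\alpha\lambda(U)+(1-\alpha)\lambda(V)\bigr)^{-1/2},
\end{equation*}
and inverting the monotone bijection $t\mapsto t^{-1/2}$ produces $\lambda(\alpha U+(1-\alpha)V)\le\alpha\lambda(U)+(1-\alpha)\lambda(V)$.

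To prove the $\tfrac12$-concavity inequality itself I would follow Colesanti's recipe. Let $u_U,u_V$ denote the positive $L^2$-normalized principal Dirichlet eigenfunctions on $U,V$, extended by $0$ outside their domains, and set $W_\alpha:=\alpha U+(1-\alpha)V$. Define the Minkowski sup-convolution
\begin{equation*}
w(z):=\sup\bigl\{u_U(x)^{\alpha}u_V(y)^{1-\alpha}\colon z=\alpha x+(1-\alpha)y,\,x\in U,\,y\in V\bigr\},\qquad z\in\BR^2.
\end{equation*}
One checks that $w$ is supported in the closure of $W_\alpha$ and lies in $H^1_0(W_\alpha)$. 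The Pr\'ekopa--Leindler inequality applied to $u_U^2$ and $u_V^2$ gives $\Vert w\Vert_2\ge 1$, while a layer-cake argument on the level sets of $w$, exploiting the log-concavity of first Dirichlet eigenfunctions (a theorem of Brascamp--Lieb) together with the coarea formula, controls the Dirichlet energy by $\Vert\nabla w\Vert_2^{\,2}\le\bigl[\alpha\lambda(U)^{-1/2}+(1-\alpha)\lambda(V)^{-1/2}\bigr]^{-2}$. Inserting $w$ as a test function in the Rayleigh quotient defining $\lambda(W_\alpha)$ then delivers the required inequality.

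The equality clause is the main obstacle. Tracing backward: equality in Jensen forces $\lambda(U)=\lambda(V)$, whereas equality in the $\tfrac12$-concavity step, via the equality characterization of Pr\'ekopa--Leindler for the strictly log-concave densities $u_U^2$ and $u_V^2$, forces $u_V$ to coincide with $u_U$ up to a translation and a multiplicative dilation of the domain. Combined with $\lambda(tU)=t^{-2}\lambda(U)$ and $\lambda(U)=\lambda(V)$, the dilation factor must equal $1$, so that $U$ and $V$ are translates of one another and in particular homothetic. The delicate point that demands care in the execution is extracting rigidity from the Pr\'ekopa--Leindler equality case in a way that transfers back from the eigenfunctions to the supporting domains.
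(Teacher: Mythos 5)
Your argument is correct, and it rests on the same foundation the paper uses: Colesanti's Theorem~1 (the $\tfrac12$-concavity of $\lambda$ under Minkowski combination) together with the rigidity characterization of its equality case. The paper's own proof is even terser --- it simply points to formula~(45) of Colesanti's paper for the convexity and to the argument starting on page~128 for rigidity, without isolating the $\tfrac12$-concavity as an explicit intermediate step. The one genuine addition in your route is the algebraic observation that $\tfrac12$-concavity of $\lambda^{-1/2}$, combined with strict convexity of $t\mapsto t^{-1/2}$ and Jensen, yields the stated convexity of $\lambda$ and simultaneously shows that equality forces $\lambda(U)=\lambda(V)$. This makes the rigidity a touch sharper than claimed: equality then forces $U$ and $V$ to be translates, not merely homothetic, which is a conclusion the paper only reaches later (in the proof of Proposition~\ref{prop-3}) via a separate scaling argument. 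Your acknowledgment that the delicate point is transferring Pr\'ekopa--Leindler rigidity from the eigenfunctions back to the domains is apt; both your sketch and the paper outsource exactly that step to Colesanti.
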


\begin{proofsect}{Proof}
The argument can be gleaned from the proof of Theorem~1 in Colesanti~\cite{colesanti2005brunn}. Indeed, convexity of~$\lambda$ is shown in formula~(45) on page 127 while strict convexity for non-homothetic~$U$ and~$V$ follows by assuming equality in formula~(45) and then following the argument starting from the second display from the bottom on page 128.
\end{proofsect}

Concerning the perimeter, we claim:

\begin{lemma}
\label{lem:parconv}
$\mathfs{P}(\cdot)$ is an affine function on the class of bounded, open and convex sets. Explicitly, for any $U,V\subset\BR^2$ bounded, open and convex,
\begin{equation}
\CP\bigl(\alpha U+(1-\alpha)V\bigr)=\alpha\CP(U)+(1-\alpha)\CP(V),\qquad \alpha\in[0,1].
\end{equation}
\end{lemma}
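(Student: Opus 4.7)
The plan is to verify the identity first for convex polygons by a direct edge-wise computation, and then to extend to general convex bodies by a simultaneous inscribed-polygon approximation based on parameterizing the boundary by the outward normal angle. The use of normal-angle partitions---as opposed to arbitrary boundary partitions---is essential, because it makes the Minkowski-addition structure compatible with the partitions on $\partial U$, $\partial V$ and $\partial(\alpha U + (1-\alpha)V)$ all at once.

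For the polygon case, I would label each edge of a convex polygon by the angle of its outward unit normal. Given convex polygons $P$ and $Q$, by passing to a common refinement of the edge normals---allowing some of the resulting edges to be degenerate (zero length)---one may assume $P$ and $Q$ share a common ordered list of normals $n_1,\dots,n_k$, with edge vectors $v_i^P = \ell_i^P\tau_i$ and $v_i^Q = \ell_i^Q\tau_i$, where $\tau_i$ is the ninety-degree counterclockwise rotation of $n_i$ and $\ell_i^P,\ell_i^Q\ge 0$. A direct inspection of the Minkowski combination, edge by edge, reveals that $\alpha P+(1-\alpha)Q$ is a convex polygon whose edge with outward normal $n_i$ has edge vector $(\alpha\ell_i^P+(1-\alpha)\ell_i^Q)\tau_i$. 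Moreover, for any convex polygon $R$ one has $\mathfs{P}(R)=\sum_j\rho(v_j^R)$: the vertices provide a partition realizing this sum, and subdividing an edge does not change the contribution since positively proportional vectors are additive under $\rho$. Positive homogeneity of $\rho$ then yields
\[
\mathfs{P}\bigl(\alpha P+(1-\alpha)Q\bigr) = \sum_i \bigl(\alpha\ell_i^P+(1-\alpha)\ell_i^Q\bigr)\rho(\tau_i) = \alpha\mathfs{P}(P)+(1-\alpha)\mathfs{P}(Q).
\]

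For general convex $U,V\in\CU$, I would parameterize $\partial U$ and $\partial V$ by outward normal angle: for each $\theta\in[0,2\pi)$ let $p_U(\theta)$ and $p_V(\theta)$ denote support points of $U$ and $V$ in direction $n_\theta:=(\cos\theta,\sin\theta)$. The support-function identity $h_{\alpha U+(1-\alpha)V}=\alpha h_U+(1-\alpha)h_V$ implies that $p_W(\theta):=\alpha p_U(\theta)+(1-\alpha)p_V(\theta)$ is a support point of $W:=\alpha U+(1-\alpha)V$ in direction $n_\theta$, and in particular lies on $\partial W$. Choose a sequence of increasingly fine partitions $0=\theta_0^n<\theta_1^n<\cdots<\theta_{m_n}^n=2\pi$ common to both bodies, and let $P_n$, $Q_n$ denote the convex polygons inscribed in $U$, $V$ with vertices $\{p_U(\theta_i^n)\}_i$ and $\{p_V(\theta_i^n)\}_i$. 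Since refining a partition cannot decrease its $\rho$-variation sum (by the triangle inequality for $\rho$), one may select the $\theta_i^n$ so that $\mathfs{P}(P_n)\to\mathfs{P}(U)$ and $\mathfs{P}(Q_n)\to\mathfs{P}(V)$ simultaneously, using that for a convex body the supremum defining $\mathfs{P}$ is realized on normal-angle partitions (any boundary partition can be relabeled by a consistent choice of supporting normals at its vertices). Crucially, by construction $\alpha P_n+(1-\alpha)Q_n$ is the polygon inscribed in $W$ with vertices $\{p_W(\theta_i^n)\}_i$.

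The polygon case applied to $\alpha P_n+(1-\alpha)Q_n$ now yields
\[
\mathfs{P}\bigl(\alpha P_n+(1-\alpha)Q_n\bigr) = \alpha\mathfs{P}(P_n)+(1-\alpha)\mathfs{P}(Q_n) \,\underset{n\to\infty}\longrightarrow\, \alpha\mathfs{P}(U)+(1-\alpha)\mathfs{P}(V).
\]
On one hand, because this polygon has its vertices on $\partial W$, the partition sum it realizes is admissible in the supremum defining $\mathfs{P}(W)$, so $\mathfs{P}(\alpha P_n+(1-\alpha)Q_n)\le\mathfs{P}(W)$, giving $\alpha\mathfs{P}(U)+(1-\alpha)\mathfs{P}(V)\le\mathfs{P}(W)$ in the limit. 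On the other hand, $\alpha P_n+(1-\alpha)Q_n\to W$ in Hausdorff distance, so Lemma~\ref{lemma-3.7u} gives $\mathfs{P}(W)\le\liminf_n\mathfs{P}(\alpha P_n+(1-\alpha)Q_n)=\alpha\mathfs{P}(U)+(1-\alpha)\mathfs{P}(V)$, and the two inequalities together yield the affine identity. The main obstacle, in my estimation, is the reduction to normal-angle partitions for $\mathfs{P}(U)$: this requires careful handling of corners of $\partial U$ (where several supporting normals correspond to a single vertex) and flat edges (where a single normal corresponds to a whole segment of the boundary), but in both cases the matter is resolved by a consistent choice of representative support points within the relevant supporting arcs.
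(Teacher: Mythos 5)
Your polygon case is fine, and is actually a cleaner argument than the paper's: you refine both polygons to a common normal fan and compute edge-by-edge, whereas the paper proves the identity for $U$ a polygon and $V$ \emph{arbitrary} convex by decomposing $\partial(U+V)$ into translated edges of $U$ and arcs of $\partial V$. Either works. The general case also follows the paper's overall strategy (approximate by inscribed polygons, get one inequality from the supremum definition of $\CP$ and the other from the Hausdorff lower-semicontinuity of Lemma~\ref{lemma-3.7u}), and your idea of anchoring the two polygon approximations to a common set of normal angles is a sensible one.

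However, the pivotal claim ``by construction $\alpha P_n+(1-\alpha)Q_n$ is the polygon inscribed in $W$ with vertices $\{p_W(\theta_i^n)\}_i$'' is false, and it is precisely what you use to get $\CP\bigl(\alpha P_n+(1-\alpha)Q_n\bigr)\le\CP(W)$ and hence $\alpha\CP(U)+(1-\alpha)\CP(V)\le\CP(W)$. Even though $P_n$ and $Q_n$ have vertices at the support points for the \emph{same} angles $\theta_i^n$, their normal fans as polygons are generally different (the edge normals of $P_n$ depend on the curvature of $\partial U$, not just on the $\theta_i^n$), so the Minkowski sum $\alpha P_n+(1-\alpha)Q_n$ acquires extra ``off-diagonal'' vertices of the form $\alpha p_U(\theta_i^n)+(1-\alpha)p_V(\theta_j^n)$ with $i\neq j$, and these generically lie in the \emph{interior} of $W$, not on $\partial W$. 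For a concrete instance take $U$ the unit disc, $V$ the ellipse $\{x^2/4+y^2<1\}$, $\alpha=1/2$, and $\theta_i\in\{0,\pi/2,\pi,3\pi/2\}$: then $P_n$ and $Q_n$ are quadrilaterals but $\alpha P_n+(1-\alpha)Q_n$ is an octagon whose four extra vertices $(\pm2,\pm1)$ are easily checked to lie strictly inside $W=U+V$, so the partition they define is not an admissible partition of $\partial W$. What is actually needed is the monotonicity $\CP(A)\le\CP(B)$ for nested convex bodies $A\subseteq B$, which is true but not a direct consequence of the supremum definition and is not established elsewhere in the paper either. It is only fair to note that the published proof makes essentially the same unjustified assertion (``$U_\epsilon+V_\epsilon$ is a convex polygon whose vertices lie on $\partial(U+V)$''), so you have faithfully reproduced a gap rather than introduced one; nevertheless a complete argument should either prove monotonicity of $\CP$ on convex sets (e.g.\ via the Cauchy projection formula for the anisotropic perimeter), or avoid the inscribed Minkowski-sum polygon altogether for the ``$\ge$'' inequality. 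The ``$\le$'' direction, via Lemma~\ref{lemma-3.7u}, is correct as you wrote it.
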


\begin{proof}
Let $U$ and $V$ be bounded, open and convex sets in $\BR^2$. Since $\alpha\CP(U)=\CP(\alpha U)$, it suffices to prove
\begin{equation}
\label{E:2.15}
\CP(U+V)=\CP(U)+\CP(V)
\end{equation}
for $U+V:=\{u+v\colon u\in U,\,v\in V\}$. The setting is illustrated in Fig.~\ref{fig1}.

\newcounter{obrazek}

\begin{figure}[t]
\refstepcounter{obrazek}
\label{fig1}
\vspace{.2in}
\includegraphics[width=2.8in]{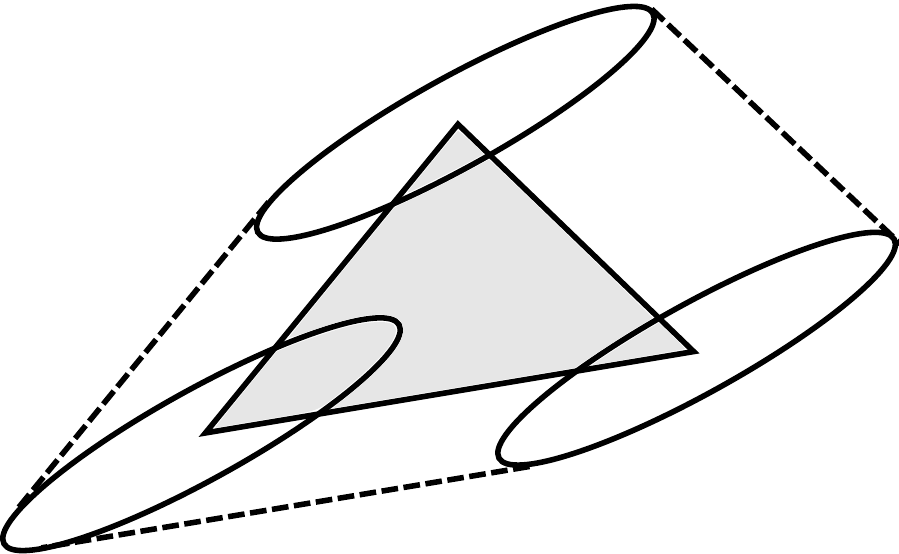}
\vspace{.2in}
\bigskip
\begin{quote}
\fontsize{9}{5}\selectfont
{\sc Fig.~\theobrazek.\ } An illustration of the argument underpinning  the proof of Lemma~\ref{lem:parconv}. Here the shaded triangle corresponds to the convex polygon~$U$ while the ellipses stand for the set~$V$. Note that the dashed lines are just translates of the sides of the triangle.
\normalsize
\end{quote}
\end{figure}

Let us first assume that~$U$ is a convex polygon. Denote by $u_1,\dots,u_n$ the vertices of~$U$ listed in the clockwise direction and let $e_1:=(u_1,u_2),\ldots,e_n:=(u_n,u_1)$ be the corresponding edges regarded as vectors in~$\BR^2$. For every $1\le i\le n$, there is at least one point $v_i\in\partial V$ for which
\begin{equation}
\bigl\{sv_i+t{e}_i\colon s\ge1,\,t\in\BR\}\cap V=\emptyset.
\end{equation}
The points $\{v_i\colon i=1,\dots,n\}$ are not necessarily unique but the above ensures that they run around~$\partial V$ in the clockwise direction.
 
Now consider the set $C:=U+V$. We claim that $C$ is the convex hull of $\bigcup_{i=1}^n(u_i+V)$. Indeed, every point in~$C$ can be written as $u+v$ for some $u\in U$ and $v\in V$. Since there are $t_1,\ldots,t_n\ge0$ satisfying $\sum t_i=1$ and $\sum t_i u_i=a$, we have
\begin{equation}
u+v=\sum_{i=1}^nt_i(u_i+v).
\end{equation}
It follows that $C$ is a subset of the convex hull of $\bigcup_{i=1}^n(u_i+V)$; the other inclusion is trivial. 

Now by the definition of $v_i$ and the convexity of~$U$ we obtain that the line connecting $u_i+v_i$ and $u_{(i+1)\text{ mod } n}+v_i$ is in $\partial C$. The $\rho$-lengths of these segments add up to exactly $\CP(U)$. On the other hand, the complement of these segments in~$\partial C$ is the union of closed arcs connecting~$v_i$ to $v_{(i+1)\text{ mod }n}$ on~$\partial V$. The sum of the~$\rho$-lengths of these arcs is~$\CP(V)$. Hence $\CP(C)=\CP(U)+\CP(V)$ thus proving \eqref{E:2.15} when~$U$ is a polygon.

Let now~$U,V$ be general bounded, open and convex sets. Given~$\epsilon>0$, let $U_\epsilon$ be a convex polygon with vertices on~$\partial U$ such that
\begin{equation}
\label{E:3.26ue}
\distH(U,U_\epsilon)<\epsilon \quad\text{and}\quad\CP(U)\ge\CP(U_\epsilon)\ge\CP(U)-\epsilon,
\end{equation}
and let $V_\epsilon$ be the analogous object for~$V$. Then $U_\epsilon+V_\epsilon$ is a convex polygon whose vertices lie on~$\partial(U+V)$ and so, by the definition of the perimeter and \eqref{E:2.15} for polygonal domains,
\begin{equation}
\CP(U+V)\ge\CP(U_\epsilon+V_\epsilon)=\CP(U_\epsilon)+\CP(V_\epsilon)\ge\CP(U)+\CP(V)-2\epsilon.
\end{equation}
Taking $\epsilon\downarrow0$, we get $\ge$ in \eqref{E:2.15} for~$U$ and~$V$. To get also $\le$, we use that $U_\epsilon\to U$ and $V_\epsilon\to V$ in the Hausdorff distance. Lemma~\ref{lemma-3.7u} then shows
\begin{equation}
\begin{aligned}
\CP(U+V)
&\le \liminf_{\epsilon\downarrow0}
\CP(U_\epsilon+V_\epsilon)
\\&=\liminf_{\epsilon\downarrow0}\,\bigl(\CP(U_\epsilon)+\CP(V_\epsilon)\bigr)
\le \CP(U)+\CP(V),
\end{aligned}
\end{equation}
where we used the claim for convex polygons to get the equality and \eqref{E:3.26ue} for both~$U$ and~$V$ to get the inequality on the extreme right.
This proves \eqref{E:2.15} and thus the whole claim.
\end{proof}

\begin{remark}
\label{rem-notaffine}
It is easy to come up with examples of non-convex sets in~$\BR^2$ for which the statement of Lemma~\ref{lem:parconv} fails. Similarly, the example of $U:=(0,1)^d$ and $V:=$ Euclidean ball shows that this lemma does not apply in~$d\ge3$ either.
\end{remark}

We are now ready to prove the above uniqueness claim:

\begin{proofsect}{Proof of Proposition~\ref{prop-3}}
Suppose that $U,V\in\CU$ are two convex minimizers of~$\CF$. If $U$ and~$V$ were not homothetic, then the combination of Lemmas~\ref{lem:parconv} and~\ref{lemma-2.9} would show 
\begin{equation}
\CF\bigl(\alpha U+(1-\alpha)V\bigr)<\alpha \CF(U)+(1-\alpha)\CF(V),\qquad \alpha\in(0,1)
\end{equation}
and so at least one of~$U$ and~$V$ could not be a minimizer. Hence,~$U$ and~$V$ are homothetic and so there is~$x\in\BR^2$ and $t>0$ such that $U=x+tV$. 
The scaling relations \eqref{E:scaling} then imply $\lambda(U)=t^{-2}\lambda(V)$ and $\mathfs{P}(U)=t\mathfs{P}(V)$ and so $\CF(U)=g(t)$ where
\begin{equation}
g(s):=\CF(sV)=s^{-2}\lambda(V)+s\mathfs{P}(V),\qquad s>0.
\end{equation}
Now~$g$ is strictly convex with a unique minimum on $(0,\infty)$. Since $g(1)=\CF(V)$ while $g(t)=\CF(U)$, if both~$U$ and~$V$ are to be minimizers of~$\CF$ then we must have~$t=1$. We conclude that~$U$ is a translate of~$V$, thus proving the statement.
\end{proofsect}

\subsection{Ruling out non-convex minimizers}
The next task is to show that there are in fact no non-convex minimizers of~$U\mapsto\CF(U)$. For this purpose, as well as further use later, we will need:

\begin{lemma}
\label{lemma-hull}
Suppose $U\subset\BR^2$ be open and simply connected and let $U'\subset\BR^2$ be bounded, open and convex with $U\subseteq U'$. Then $U\ne U'$ implies
$\lambda(U')<\lambda(U)$.
\end{lemma}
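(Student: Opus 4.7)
The plan is to argue by contradiction, assuming $\lambda(U')=\lambda(U)$. The first step is to identify the principal eigenfunctions: extension by zero gives the standard inclusion $H^1_0(U)\hookrightarrow H^1_0(U')$, so if $h_U$ denotes the positive, $L^2$-normalized principal eigenfunction on~$U$, its zero-extension is a competitor in the Rayleigh quotient on~$U'$ attaining the value $\lambda(U)=\lambda(U')$. Since~$U'$ is connected (being convex), its principal eigenvalue is simple; hence the extension of~$h_U$ agrees a.e.\ with a positive multiple $c\,h_{U'}$ of the principal eigenfunction on~$U'$. Because $h_{U'}$ is continuous and strictly positive on~$U'$ by elliptic regularity and the strong maximum principle, while the extension of~$h_U$ vanishes on $U'\setminus U$, the first conclusion is that $|U'\setminus U|=0$.

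Next I would upgrade Lebesgue measure zero to logarithmic capacity zero using quasi-continuous representatives from potential theory: a function in $H^1_0(U)$ admits a quasi-continuous representative vanishing quasi-everywhere on $\BR^2\setminus U$, and this representative must coincide q.e.\ with the continuous function $c\,h_{U'}$, so strict positivity of $h_{U'}$ forces $U'\setminus U$ to have logarithmic capacity zero.

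The contradiction will come from showing that $U'\setminus U$ in fact has positive capacity, and this is where simple connectedness of~$U$ enters. The key topological claim is that every connected component of $U'\setminus U$ contains more than one point. If some component were a singleton $\{p\}$, the fact that components of compact Hausdorff spaces admit arbitrarily small clopen neighborhoods (applied to the compact set $(U'\setminus U)\cap\overline{B_r(p)}$ for~$r$ with $\overline{B_r(p)}\subset U'$) produces an open set~$O\supset\{p\}$ of arbitrarily small diameter with $\overline O\subset U'$ and $\partial O\cap(U'\setminus U)=\emptyset$. Taking the distance-tubular neighborhood of the clopen neighborhood at a generic radius (Sard's theorem applied to the distance function), the outermost component of $\partial O$ is a simple closed curve in~$U$ enclosing~$p$ with winding number $\pm 1$; this loop cannot be contracted in $U\subseteq\BR^2\setminus\{p\}$, contradicting the hypothesis that~$U$ is simply connected.

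Once each component of $U'\setminus U$ is known to have positive diameter, projecting onto a line of maximal extension yields a non-degenerate interval in $\BR$; since orthogonal projection is $1$-Lipschitz the component has positive $1$-dimensional Hausdorff measure, and Frostman's lemma then gives positive logarithmic capacity, contradicting the conclusion of the second paragraph. The most delicate step I anticipate is the topological one: making rigorous the production of a non-contractible small Jordan loop around a possibly non-isolated singleton component (the behavior one sees for Cantor-type configurations) requires some care with the clopen-neighborhood structure of components in compact planar sets.
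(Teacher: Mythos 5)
Your proof reaches the conclusion by essentially the same mechanism as the paper: both arguments reduce to showing that $U'\setminus U$ has positive capacity because (by simple connectedness of $U$) it must contain a closed connected set of positive diameter, and both exploit the two-dimensional fact that carving a positive-capacity set out of a domain strictly raises the principal Dirichlet eigenvalue. The paper states the topological claim without proof and then cites Henrot for the capacity--eigenvalue monotonicity; you instead rebuild that monotonicity from first principles via the zero-extension of $h_U$, simplicity of $\lambda(U')$ on the connected domain $U'$, the strong maximum principle, quasi-continuous representatives, and Frostman's lemma. That unpacking is correct and close to what the cited result's proof would do, so the two routes are not genuinely different, just differently granular. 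Your version also has the small advantage of working directly with the compact set $(U'\setminus U)\cap\overline{B_r(p)}$, which sidesteps the fact that $U'\setminus U$ need not be relatively compact in $U'$ (a point the paper's appeal to $\text{cap}_{U'}$ quietly glosses over).

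There is, however, a concrete gap in the step you yourself flagged as delicate. The distance function to a compact set is merely Lipschitz, not $C^1$, so Sard's theorem does not apply, and even at a generic level there is no reason for the outermost component of a distance-function level set to be a simple closed curve. Fortunately the Jordan-curve/winding-number detour is unnecessary. Once you have a bounded open set $O\ni p$ with $\overline O\subset U'$ and $\partial O\cap(U'\setminus U)=\emptyset$, you in fact have $\partial O\subset U$. Now $X:=(\BR^2\cup\{\infty\})\setminus U$ is compact and connected (this is the standard planar characterization of simple connectedness), it contains both $p$ and $\infty$, and it is disjoint from $\partial O$; but $p$ lies in a bounded component of $(\BR^2\cup\{\infty\})\setminus\partial O$ while $\infty$ lies in the unbounded one, so $X$ would have to be disconnected --- a contradiction. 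Equivalently, applying the boundary-bumping theorem for continua to $X$ and the compact neighborhood $X\cap\overline{B_r(p)}$ of $p$ in $X$ shows directly that the component of $p$ in $(U'\setminus U)\cap\overline{B_r(p)}$ meets $\partial B_r(p)$ and hence has diameter at least $r$, which gives the paper's topological claim at once and renders the singleton-component case distinction unnecessary.
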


\begin{proofsect}{Proof}
Since $U$ is simply connected, the fact that $U\ne U'$ implies that $U'\smallsetminus U$ contains a closed connected set of non-zero diameter. In $d=2$, this set has a non-zero capacity in $U'$, i.e., $\text{cap}_{U'}(U'\smallsetminus U)>0$. (Here for $V\subseteq U$ bounded and $V$ relatively compact in~$U$, $\text{cap}_U(V)$ is the minimum of~$\Vert\nabla f\Vert_2^2$ over $C^2$-functions~$f$ such that~$f:=1$ on~$V$ and~$f:=0$ on~$U^\cc$.) By known facts about Dirichlet eigenvalues (see, e.g., Henrot~\cite[Section~1.3.2]{henrot}), if $V\subseteq U$ with~$\text{cap}_U(U\smallsetminus V)>0$ then $\lambda(V)>\lambda(U)$.
\end{proofsect}

We can now give:

\begin{proofsect}{Proof of Theorem~\ref{thm-0}}
Let $\rho$ be a norm on $\BR^2$. By Lemmas~\ref{lemma-2.4} and~\ref{lemma-hull}, every minimizer is convex while Proposition~\ref{prop-3} ensures that the convex minimizer is unique up to translations. It follows that the set $\CM$ in \eqref{E:1.14} for $U_0$ as in Proposition~\ref{prop-3} exhausts all minimizers of~$\CF$ on~$\CU$.

As is easily checked, $U\mapsto\lambda(U)$ is preserved by rotations, reflections and translations of~$\BR^2$ while $U\mapsto\CP(U)$ is preserved by translations, reflections and exactly those rotations that preserve~$\rho$. The uniqueness of the minimizer up to translations implies that also the minimizing shape is preserved by those symmetries. 
\end{proofsect}

The above also permits us to prove the continuity statement in Theorem~\ref{thm-1}:

\begin{proofsect}{Proof of Theorem~\ref{thm-1} (continuity of $\rho\mapsto U_0$)}
Let~$\rho_n$ be a sequence of norms converging to~$\rho$ uniformly on the unit (Euclidean) sphere. Let~$\CF_n$, resp., $\CF$ denote the functional of interest for norms $\rho_n$, resp., $\rho$ and  let~$U_0^n$, resp.,~$U_0$ denote the corresponding (unique) centered minimizers. Reducing to a subsequence if needed, we may assume that $U_0^n$ converge in the Hausdorff distance to an open convex set~$V$. Lemmas~\ref{lemma-3.7vv}-\ref{lemma-3.7u} then give
\begin{equation}
\CF(V)\le\liminf_{n\to\infty}\CF_n(U_0^n).
\end{equation}
Our task is to show that~$V=U_0$. Suppose, for the sake of contradiction, that~$V\ne U_0$. Then the fact that~$V$ is also centered means that $\CF(V)>\CF(U_0)$. But Lemma~\ref{lemma-3.7u} also ensures that
\begin{equation}
\CF(U_0)=\lim_{n\to\infty}\CF_n(U_0)
\end{equation}
and so if~$\CF(U_0)<\CF(V)$, then $\CF_n(U_0^n)>\CF_n(U_0)$ for all~$n$ sufficiently large. This contradicts the minimality of~$U_0^n$ and so~$\CF(V)=\CF(U_0)$ after all. The uniqueness of the minimizer modulo shifts then yields $U_0^n\to U_0$ in the Hausdorff distance.
\end{proofsect}

To finish the proof Theorem~\ref{thm-1} we first show:

\begin{proposition}
\label{prop-2.10}
For each~$\epsilon>0$ there is $\delta>0$ such that for all~$U\in\CU$ and with~$\wh U$ denoting the convex hull of~$U$,
\begin{equation}
\distH(U,\wh U)>\epsilon\quad\Rightarrow\quad\CF(U)\ge\min\CF+\delta.
\end{equation}
\end{proposition}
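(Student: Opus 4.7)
My plan is to argue by contradiction, combining the compactness machinery already developed for Theorems~\ref{thm-0}--\ref{thm-1} with \v Sver\'ak's continuity theorem. Assume the conclusion fails; then there exist $\epsilon>0$ and a sequence $\{U_n\}\subset\CU$ with $\distH(U_n,\wh U_n)>\epsilon$ while $\CF(U_n)\to\min\CF$. By Lemma~\ref{lemma-2.1} the sets $U_n$ (and therefore the $\wh U_n$) have uniformly bounded diameter and a uniform lower bound on their area. The monotonicity $\lambda(\wh U_n)\le\lambda(U_n)$ combined with $\CP(\wh U_n)\le\CP(U_n)$ from Lemma~\ref{lemma-2.4} gives $\CF(\wh U_n)\le\CF(U_n)\to\min\CF$, so $\{\wh U_n\}$ is itself a minimizing sequence of convex Jordan domains.

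Next, following the arguments that underlie Propositions~\ref{lem:exist} and~\ref{prop-3}, I would translate each pair $(U_n,\wh U_n)$ by a common vector $x_n\in\BR^2$ (which preserves $\CF$) so that, along a subsequence, $x_n+\wh U_n\to U_0$ in the Hausdorff metric, with $U_0$ the unique centered convex minimizer. Writing
\begin{equation}
\CF(U_n)-\CF(\wh U_n)=\bigl[\lambda(U_n)-\lambda(\wh U_n)\bigr]+\bigl[\CP(U_n)-\CP(\wh U_n)\bigr],
\end{equation}
with both brackets non-negative and their sum tending to zero, and combining with $\lambda(\wh U_n)\to\lambda(U_0)$ from Lemma~\ref{lemma-3.7vv}, I obtain the key convergence $\lambda(U_n)\to\lambda(U_0)$.

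For each~$n$, choose $y_n^\star\in\wh U_n$ with $B(y_n^\star,\epsilon)\cap U_n=\emptyset$; such a point exists because $U_n\subseteq\wh U_n$ and $\distH(U_n,\wh U_n)>\epsilon$. Passing to a further subsequence, $y_n^\star\to y^\star\in\overline{U_0}$. Set $V_n:=\wh U_n\setminus\overline{B(y_n^\star,\epsilon/2)}$, so that $U_n\subseteq V_n$ and hence $\lambda(U_n)\ge\lambda(V_n)$. The complement $\BR^2\setminus V_n$ has at most two connected components, so \v Sver\'ak's theorem~\cite{Sverak1993} applies and yields $\lambda(V_n)\to\lambda(V)$ for $V:=U_0\setminus\overline{B(y^\star,\epsilon/2)}$. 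Because $U_0$ is open and convex and $y^\star\in\overline{U_0}$, the intersection $U_0\cap B(y^\star,\epsilon/2)$ contains a non-empty open set and therefore (in $d=2$) has strictly positive capacity in~$U_0$; the strict monotonicity of Dirichlet eigenvalues under removal of a set of positive capacity (as cited in the proof of Lemma~\ref{lemma-hull}) then gives $\lambda(V)>\lambda(U_0)$, contradicting $\lambda(U_n)\to\lambda(U_0)$.

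The principal obstacle is handling the case $y^\star\in\partial U_0$, where one must rule out that the ``hole'' $U_0\cap B(y^\star,\epsilon/2)$ degenerates to a polar set in the limit. Convexity of~$U_0$ together with the uniform lower bound on the inradius from Lemma~\ref{lemma-2.1} provides an interior ball $B(z,r)\subseteq U_0$ of radius~$r$ independent of~$n$; the straight cone from $B(z,r)$ to~$y_n^\star$ lies in $\wh U_n$ by convexity, and its intersection with $B(y_n^\star,\epsilon/2)$ contains a non-degenerate conical region of Lebesgue area bounded below uniformly in~$n$, a property preserved in the limit. A secondary technical point is verifying the precise Hausdorff-complementary convergence hypothesis of \v Sver\'ak's theorem for the non-convex sets $V_n$; this reduces to the Hausdorff convergence $\wh U_n\to U_0$ (which in the convex setting passes to complements in any fixed large ball) together with the elementary convergence $\overline{B(y_n^\star,\epsilon/2)}\to\overline{B(y^\star,\epsilon/2)}$.
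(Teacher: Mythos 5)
Your argument is correct, though it differs structurally from the paper's own proof, and the comparison is instructive. The paper's proof works directly with the sequence $\{V_n\}$ itself: after centering, it extracts (by Blaschke-type compactness of closed subsets of a large ball~$B$) a complementary-Hausdorff limit $V$ of the non-convex sets $V_n$, shows $V\subsetneq W$ where $W$ is the Hausdorff limit of $\{\wh V_n\}$, and then applies \v Sver\'ak together with $\CP(\wh V_n)\to\CP(W)$ to get $\lambda(V)\le\lambda(W)$, which contradicts Lemma~\ref{lemma-hull}. You instead pre-establish the auxiliary convergence $\lambda(U_n)\to\lambda(U_0)$ via the sandwich $\lambda(U_n)-\lambda(\wh U_n)\ge 0$, $\CP(U_n)-\CP(\wh U_n)\ge 0$, $\CF(U_n)-\CF(\wh U_n)\to 0$, and then build an explicit intermediate sequence $V_n:=\wh U_n\setminus\overline{B(y_n^\star,\epsilon/2)}$ squeezed between $U_n$ and $\wh U_n$, whose \v Sver\'ak limit $V=U_0\setminus\overline{B(y^\star,\epsilon/2)}$ is known in closed form. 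The paper's route avoids the auxiliary sequence and the explicit identification of the limit (it only needs to know $V$ exists and sits strictly inside $W$), at the cost of having to argue about a complementary-Hausdorff limit that is only abstractly defined; your route makes the limit concrete but requires the extra sandwich step and the fact that complementary-Hausdorff convergence for the constructed $V_n$ follows from Hausdorff convergence of the convex hulls together with convergence of the removed balls. Two small remarks: first, the concern you flag as the ``principal obstacle'' is actually not an obstacle at all --- since $U_0$ is open and $y^\star\in\overline{U_0}$, the set $U_0\cap B(y^\star,\epsilon/2)$ is automatically a non-empty open set and hence non-polar, so the cone construction, while correct, is unnecessary. Second, when you invoke strict monotonicity of $\lambda$ you are right to appeal to the underlying capacity fact rather than to the statement of Lemma~\ref{lemma-hull} itself, since your $V$ need not be simply connected when $y^\star$ is an interior point of $U_0$.
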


\begin{proof}
Suppose, for the sake of contradiction, that the statement fails. Then there is $\epsilon>0$ and a sequence of sets $V_n\in\CU$ such that
\begin{equation}
\label{E:2.26}
\distH(V_n,\wh V_n)>\epsilon\quad\text{but}\quad \lim_{n\rightarrow\infty}\CF(V_n)=\min\CF.
\end{equation}
By Lemma~\ref{lemma-2.1}, all $V_n$ lie inside a large closed ball~$B$ centered at the origin. The same lemma also ensures that all $V_n$'s contain an open disc of some radius $r>0$ (otherwise, as noted before, $|\partial V_n|\ge c|V_n|r^{-1}$ for some constant~$c>0$ depending only on~$\rho$). Thanks to shift invariance of~$\CF$, we may thus assume that every~$V_n$ contains a disc of radius~$r$ centered at~$0$. (This ensures that the containment of zero in the interior is preserved under various Hausdorff limits below.)

Lemma~\ref{lemma-2.4} gives $\CF(\widehat{V}_n)\le \CF(V_n)$ and thus also $\CF(\widehat{V}_n)\to\min\CF$. Reducing to a subsequence of~$\{V_n\}$ if necessary, the same argument as used in the proof Proposition~\ref{lem:exist} implies that, for some convex $W\in\CU$,
\begin{equation}
\label{E:2.27w}
\distH(\widehat{V}_{n},W)\,\,\underset{n\to\infty}\longrightarrow\,\, 0
\end{equation}
and
\begin{equation}
\label{E:2.28}
\CP(\wh V_{n})\,\underset{n\to\infty}\longrightarrow\,\CP(W)
\quad\text{and}\quad
\CF(\widehat{V}_{n})\,\,\underset{n\to\infty}\longrightarrow\,\,\CF(W).
\end{equation}
In particular, $\CF(W)=\min\CF$ and so $W$ is a convex minimizer.

By sequential compactness of the class of closed subsets of~$B$ in the Hausdorff distance, we may (by reducing to another subsequence) assume that,  for some open $V\subset B$,
\begin{equation}
\label{E:3.35uw}
\distH\bigl(B\smallsetminus V_n, B\smallsetminus V\bigr)\,\underset{n\to\infty}\longrightarrow\,0.
\end{equation}
Although this does not imply that $V_n$ converges to~$V$ in the Hausdorff distance, since $V_n\subseteq\wh V_n\subset B$ and $\wh V_n\to W$ in the Hausdorff distance, we definitely get
\begin{equation}
B\smallsetminus V\supseteq B\smallsetminus W\quad\text{and so}\quad V\subseteq W.
\end{equation}
Moreover, the inequality on the left of \eqref{E:2.26} shows
\begin{equation}
\begin{aligned}
\distH(V,W)&=\distH(B\smallsetminus V,B\smallsetminus W)
\\
&=\lim_{n\to\infty}\distH(B\smallsetminus V_n,B\smallsetminus\wh V_n)
\\
&=\lim_{n\to\infty}\distH(V_n,\wh V_n)\ge\epsilon
\end{aligned}
\end{equation}
and so $V$ is in fact a strict subset of~$W$.

We claim that this leads to a contradiction. Indeed, the sets $V_n$ are simply connected and, although not necessarily connected, so is~$V$. By a theorem of \v Sver\'ak~\cite{Sverak1993} (transcribed in Henrot~\cite[Theorem~2.3.19]{henrot}), \eqref{E:3.35uw} then implies
\begin{equation}
\lambda(V_n)\,\underset{n\to\infty}\longrightarrow\,\lambda(V).
\end{equation}
Lemma~\ref{lemma-2.4} in turn  shows
\begin{equation}
\lambda(V_n)=\CF(V_n)-\CP(V_n)\le\CF(V_n)-\CP(\widehat V_n)
\end{equation}
and so \eqref{E:2.28} gives
\begin{equation}
\label{E:2.33}
\lambda(V)\le\CF(W)-\CP(W)=\lambda(W).
\end{equation}
But $V$ is a proper subset of~$W$ with~$V$ simply connected and~$W$ convex.
Hence, by Lemma~\ref{lemma-hull}, $\lambda(V)>\lambda(W)$, in contradiction with \eqref{E:2.33}.
\end{proof}


\begin{proofsect}{Proof of Theorem~\ref{thm-1} (completed)}
In order to get \eqref{E:1.15}, we note that for any $U\in\CU$,
\begin{equation}
\label{E:3.41ue}
\distH(U,\CM)>\epsilon\quad\Rightarrow\quad\distH(U,\wh U)>\epsilon/2\quad\text{or}\quad\distH(\wh U,\CM)>\epsilon/2.
\end{equation}
If no~$\delta>0$ in the statement of the theorem existed, we would have a sequence $\{V_n\}\subset\CU$ such that $\distH(V_n,\CM)>\epsilon$ yet $\CF(V_n)\to\min\CF$. Proposition~\ref{prop-2.10} then shows that, for~$\wh V_n$ being the convex hull of~$V_n$, we have $\dist(V_n,\wh V_n)\to0$ and so by \eqref{E:3.41ue} above, $\distH(\wh V_n,\CM)>\epsilon/2$ must hold for all but a finite number of~$n$. Reducing to a subsequence permits us to assume that~$\wh V_n\to W$ in the Hausdorff distance and the same argument in the proof of Proposition~\ref{prop-2.10} then shows~$\CF(W)=\min\CF$ and yet $\distH(W,\CM)\ge\epsilon/2$; in contradiction with Theorem~\ref{thm-0}.
\end{proofsect}

\section{Singular features: Facets and corners}
\label{sec4}\nopagebreak\noindent
Here we will address the appearance of various singular features on the minimizing shape. This section contains the proofs of Theorems~\ref{prop-2.12},~\ref{prop-2.16} and~\ref{prop-1.2} (in this order).

\subsection{Facets}
We begin with the appearance of facets. The following lemma will be quite useful:

\begin{lemma}
\label{lemma-2.13}
Let $e$ and~$e'$ be orthogonal unit vectors and let $\theta^\pm=\theta^\pm(e,e')$ be as defined in \eqref{E:degen}. Then for any $t_1,\dots,t_n\in\BR$ and $u_1,\dots,u_n\in\BR$ such that $\sum_{i=1}^n u_i=0$,
\begin{equation}
\label{E:2.40a}
\sum_{i=1}^n\rho(t_ie+u_i e')\ge\Bigl(\,\sum_{i=1}^n t_i\,\Bigr)\rho(e)+\frac{\theta^+-\theta^-}2\sum_{i=1}^n|u_i|.
\end{equation}
Moreover, if also $t_i>0$ for all~$i=1,\dots,n$, then in fact
\begin{equation}
\label{E:2.40b}
\frac\textd{\textd s^+}\sum_{i=1}^n\rho(t_ie+s\,u_i e')\Bigl|_{s=0}=\frac{\theta^+-\theta^-}2\sum_{i=1}^n|u_i|.
\end{equation}
\end{lemma}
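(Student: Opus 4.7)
My plan is to derive both inequalities by choosing appropriate sub-gradients of $\rho$ at the vector $e$. Concretely, I will use the fact that because $\theta^\pm$ are the one-sided derivatives at $\sigma=0$ of the convex function $\phi(\sigma):=\rho(e+\sigma e')$, there exist vectors $w^+,w^-\in\BR^2$ satisfying $w^\pm\cdot e=\rho(e)$, $w^\pm\cdot e'=\theta^\pm$, and $\rho(v)\ge w^\pm\cdot v$ for every $v\in\BR^2$. This follows from the identification of the subdifferential $\partial\rho(e)$ with $\{w\in\BR^2:\rho^*(w)\le 1,\;w\cdot e=\rho(e)\}$ together with the standard fact that the range of $w\mapsto w\cdot e'$ over $\partial\rho(e)$ is exactly the interval $[\theta^-,\theta^+]$; alternatively, I can define $w^\pm$ explicitly in the orthonormal basis $\{e,e'\}$ and verify the sub-gradient inequality $\rho(v)\ge w^\pm\cdot v$ by hand, separating cases according to the sign of the $e$-component of $v$ and invoking $\rho(-v)=\rho(v)$.

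Given such $w^\pm$, for \eqref{E:2.40a} I assign $w_i:=w^+$ when $u_i\ge 0$ and $w_i:=w^-$ when $u_i<0$, and apply the sub-gradient inequality at $v:=t_ie+u_ie'$ to get $\rho(t_ie+u_ie')\ge t_i\rho(e)+u_i(w_i\cdot e')$. Writing $\theta:=(\theta^++\theta^-)/2$ and $\eta:=(\theta^+-\theta^-)/2$ and inspecting the two sign cases separately, one checks that $u_i(w_i\cdot e')=\theta u_i+\eta|u_i|$ regardless of the sign of $u_i$. Summing over $i$ and invoking the hypothesis $\sum_iu_i=0$ then collapses the $\theta$-term and leaves precisely the asserted lower bound $(\sum_it_i)\rho(e)+\eta\sum_i|u_i|$.

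For \eqref{E:2.40b} the positivity $t_i>0$ lets me write $\rho(t_ie+su_ie')=t_i\phi(su_i/t_i)$. The one-sided chain rule, combined with $\phi'(0^\pm)=\theta^\pm$, shows that the right derivative at $s=0$ of the $i$-th summand equals $u_i\theta^+$ when $u_i>0$, $u_i\theta^-$ when $u_i<0$, and $0$ when $u_i=0$. Summing these right derivatives (legitimate since the sum is finite and each summand is convex in $s$), splitting by the sign of $u_i$, and using that $\sum_iu_i=0$ forces $\sum_{u_i>0}u_i=-\sum_{u_i<0}u_i=\tfrac12\sum_i|u_i|$, produces exactly $\tfrac12(\theta^+-\theta^-)\sum_i|u_i|$. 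The main input in the whole argument is the existence of the sub-gradients $w^\pm$ with the prescribed value of $w\cdot e'$; once this is in place, both parts of the lemma reduce to short algebraic manipulations. I therefore expect no serious obstacle, only a need for care in the verification that the explicit candidates for $w^\pm$ in the basis $\{e,e'\}$ do satisfy the global sub-gradient inequality on $\BR^2$.
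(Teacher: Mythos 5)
Your argument is correct, and for the inequality \eqref{E:2.40a} it takes a genuinely different route from the paper. The paper establishes the single-term bound $\rho(te+ue')\ge t\rho(e)+\theta^{\mathrm{sgn}(u)}u$ by splitting into the cases $t>0$ (a direct convexity estimate on $s\mapsto\rho(e+se')$) and $t\le0$ (a triangle-inequality step $\rho(te+ue')+\rho(-te)\ge\rho(ue')=|u|\rho(e')$ combined with the elementary bound $\rho(e')\ge\theta^{\mathrm{sgn}(u)}\mathrm{sgn}(u)$ coming from \eqref{E:2.37}); it then expands $\theta^{\mathrm{sgn}(u)}u = \frac{\theta^++\theta^-}{2}u+\frac{\theta^+-\theta^-}{2}|u|$ and sums. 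Your subgradient argument reaches the same single-term bound $\rho(t_ie+u_ie')\ge t_i\rho(e)+\theta^{\mathrm{sgn}(u_i)}u_i$ in one stroke, valid for all $t_i\in\BR$ simultaneously, since $\rho(v)\ge w^\pm\cdot v$ together with $w^\pm\cdot e=\rho(e)$ and $w^\pm\cdot e'=\theta^\pm$ gives it without a sign distinction on $t_i$; you are using the standard facts that $\partial\rho(e)$ is nonempty and compact, that $w\cdot e=\rho(e)$ on it (which follows by testing the subgradient inequality at $v=0$ and $v=2e$), and that $\{w\cdot e':w\in\partial\rho(e)\}=[\theta^-,\theta^+]$, which is exactly the directional-derivative/support-function identity for the subdifferential of a convex function. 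Your route trades the paper's elementary case split for a small amount of convex-duality machinery, and in exchange avoids treating $t\le 0$ separately. For \eqref{E:2.40b} your one-sided chain-rule computation is essentially the paper's argument, just spelled out a bit more explicitly, and your bookkeeping $\sum_{u_i>0}u_i=-\sum_{u_i<0}u_i=\frac12\sum_i|u_i|$ matches the paper's use of $\sum_iu_i=0$. In short: same conclusion, a cleaner uniform mechanism for part one, essentially identical for part two.
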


\begin{proofsect}{Proof}
Let $t,u\in\BR$ and let $e$ and~$e'$ be orthogonal unit vectors. Let $\theta^\pm=\theta^\pm(e,e')$ be as defined above. We claim that
\begin{equation}
\label{E:2.41}
\rho(te+ue')\ge t\rho(e)+\theta^{\text{sgn}(u)}u.
\end{equation}
(Here and henceforth, $\text{sgn}(u)$ should be interpreted as the symbol $\pm$ depending on the sign of~$u$; in all expressions the sign can be arbitrary when $u$ vanishes.)
Indeed, if~$t>0$ we have
\begin{equation}
\rho(te+ue')- t\rho(e)=t\bigl[\rho(e+ut^{-1}e')-\rho(e)\bigr]\ge t\theta^{\text{sgn}(u)} ut^{-1}=\theta^{\text{sgn}(u)}u,
\end{equation}
where in the inequality we used the convexity of~$s\mapsto\rho(e+se')$. If~$t\le0$ we in turn write
\begin{equation}
\rho(te+ue')- t\rho(e)=\rho(te+ue')+\rho(-te)\ge\rho(ue')=|u|\rho(e'),
\end{equation}
where the middle inequality follows from the triangle inequality and the homogeneity of the norm. To get \eqref{E:2.41}, we observe that
\begin{equation}
\rho(e')\ge\theta^{\text{sgn}(u)}\text{sgn}(u)
\end{equation}
due to the inequalities $\rho(e')-\theta^+\ge0$ and $\rho(e')+\theta^-\ge0$, implied by \eqref{E:2.37}.

Rewriting \eqref{E:2.41} as
\begin{equation}
\rho(te+ue')\ge t\rho(e)+\frac{\theta^++\theta^-}2\,u+\frac{\theta^+-\theta^-}2\,|u|
\end{equation}
we now plug $(t_i,u_i)$ for $(t,u)$, sum over~$i=1,\dots,n$ and invoke the condition $\sum_{i=1}^u u_i=0$ to get \eqref{E:2.40a}. To get \eqref{E:2.40b}, we observe that, for~$t>0$, convexity implies
\begin{equation}
\frac{\textd}{\textd s^+}\rho\bigl(te+sue'\bigr)\Bigl|_{s=0}=\theta^{\text{sgn}(u)}u.
\end{equation}
From here \eqref{E:2.40b} follows by the same argument we just used to prove \eqref{E:2.40a}.
\end{proofsect}

The analysis of the boundary issues will also require some well-known facts concerning the regularity of the principal eigenfunction:

\begin{lemma}
\label{lemma-2.14}
Let $U\in\CU$ and let $\gamma$ be the curve parametrizing~$\partial U$. Let~$h$ be the principal eigenfunction of the Laplacian in~$U$. Then the following holds:
\begin{enumerate}
\item[(1)] $h\in C^\infty(U)$
\item[(2)] if $\gamma$ is $C^{1,\alpha}$ for some $\alpha>0$ in the interval $(a,b)$ then $h$ is $C^{1,\alpha}$ on $U\cup\gamma((a,b))$.
\end{enumerate}
In addition, for $U$ convex we have:
\begin{enumerate}
\item[(3)] For each $0<r<R<\infty$ there is a constant $C=C(R,r)$ such that if~$U$ contains a disc of radius~$r$ and is contained in a disc of radius~$R$, then $\sup_{x\in U}|\nabla h(x)|\le C$.
\end{enumerate}
\end{lemma}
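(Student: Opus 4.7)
I would prove the three parts in turn. Part (1) is immediate from elliptic bootstrapping: $h \in H^1_0(U)$ satisfies $-\Delta h = \lambda(U) h$ weakly, so $\Delta h = -\lambda(U) h \in L^2_{\mathrm{loc}}(U)$ gives $h \in H^2_{\mathrm{loc}}(U)$; iterating the argument, $h \in H^k_{\mathrm{loc}}(U)$ for every $k \ge 0$, and hence $h \in C^\infty(U)$ by Sobolev embedding. For part (2), near the $C^{1,\alpha}$ portion of $\gamma$ I would straighten the boundary by a $C^{1,\alpha}$ diffeomorphism; the transformed equation is uniformly elliptic with $C^{0,\alpha}$ coefficients and $h$ vanishes on the flat part, so boundary Schauder estimates (as in Gilbarg--Trudinger, Theorem~8.33) yield $h \in C^{1,\alpha}$ up to that boundary, and undoing the change of variables finishes (2).

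The substance is (3). Assume $U$ is convex with $B(r) \subseteq U \subseteq B(R)$ (after a translation) and normalize $\|h\|_2 = 1$. My plan is to bound $\lambda(U)$, $\|h\|_\infty$, and $|\nabla h|$ successively in terms of $r$ and $R$. Monotonicity of the principal eigenvalue in the domain gives $\lambda(U) \le \lambda(B(r)) =: c_0/r^2$, where $c_0$ is the square of the first positive zero of $J_0$ (so $c_0 < \pi^2$). With $\lambda(U)$ thus bounded and $|U| \le \pi R^2$, Moser iteration applied to $-\Delta h = \lambda(U) h$ yields $\|h\|_\infty \le M(r,R)$. Interior Schauder estimates on balls of radius $\tfrac12 \dist_2(x,\partial U)$ then bound $|\nabla h(x)|$ by $C(r,R)$ whenever $\dist_2(x,\partial U) \ge r/2$.

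The remaining boundary gradient bound is the step that uses convexity. For $x_0 \in \partial U$, let $\nu$ be an inward unit normal to a supporting line of $U$ at $x_0$ and set $t(x) := (x-x_0)\cdot \nu$, so that $t>0$ on $U$. I would compare $h$ on the slab $\Omega := U \cap \{0 < t < r\}$ with the barrier
\begin{equation}
\psi(x) := \frac{\|h\|_\infty}{\sin(\sqrt{c_0})}\,\sin\!\Bigl(\frac{\sqrt{c_0}}{r}\,t(x)\Bigr),
\end{equation}
which is positive on $\Omega$ (since $\sqrt{c_0}<\pi$) and satisfies $-\Delta \psi = (c_0/r^2)\psi \ge \lambda(U)\psi$, so that $(-\Delta - \lambda(U))(\psi - h) \ge 0$ on $\Omega$. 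The inclusion $B(r)\subseteq U$ forces $\Omega$ to be a proper open subset of $U$ (the $\nu$-segment of length $r$ from $0$ reaches $t$-values $\ge r$), so its principal Dirichlet eigenvalue strictly exceeds $\lambda(U)$ and the weak maximum principle applies to $-\Delta - \lambda(U)$ on $\Omega$. Checking boundary values, $\psi \ge 0 = h$ on $\partial U \cap \bar \Omega$ and $\psi = \|h\|_\infty \ge h$ on $\{t=r\}\cap \bar U$, so $h \le \psi$ on $\Omega$. Evaluating at $x_0$ gives
\begin{equation}
|\nabla h(x_0)| = \partial_\nu h(x_0) \le \partial_\nu \psi(x_0) = \frac{\sqrt{c_0}}{r\sin(\sqrt{c_0})}\,\|h\|_\infty \le C(r,R),
\end{equation}
which combined with the interior bound yields the required $\sup_U |\nabla h| \le C(r,R)$.

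The main obstacle is engineering the barrier: the frequency $\sqrt{c_0}/r$ must exceed $\sqrt{\lambda(U)}$ so that $\psi$ is a supersolution, while the slab width $r$ must both keep $\sin$ positive and leave $\Omega$ a proper subset of $U$. The choice above exploits the numerical relation $\lambda(B(r))\, r^2 = c_0 < \pi^2$; had $c_0 \ge \pi^2$ one would need to iterate the barrier along a finite sequence of slabs, using the interior bound at each transition. Everything else is standard elliptic theory.
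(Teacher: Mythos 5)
Your parts (1) and (2) match the paper (which simply cites Evans Section~6.3 and Gilbarg--Trudinger Corollary~8.36, respectively), so the interest is in part~(3), where you take a genuinely different route. The paper goes through the Green function representation $h(x)=\lambda(U)\int_U G^U(x,y)h(y)\,\textd y$ and imports a gradient estimate $|\nabla_1 G^U(x,y)|\le c(R)/|y-x|$ for the Green function of a convex domain from Fromm (building on Gr\"uter--Widman); the uniform boundedness of $\nabla h$ then falls out by differentiating under the integral. You instead build a linear barrier from a supporting line at each boundary point, and use a generalized maximum principle (valid because $\Omega=U\cap\{0<t<r\}$ is a proper convex subset of $U$, so by the argument of Lemma~\ref{lemma-hull} its principal eigenvalue strictly exceeds $\lambda(U)$) to obtain $0\le h\le\psi$ on the slab. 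Both routes are correct; the paper's is shorter once the Green function estimate is in hand, while yours is more elementary and self-contained, at the cost of invoking the refined maximum principle for $-\Delta-\lambda(U)$.

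There is one gap you should close. The barrier comparison gives $h\le\psi$ on $\Omega$, hence $0\le h(x)\le \frac{\sqrt{c_0}\,\|h\|_\infty}{r\sin\sqrt{c_0}}\,t(x)$ for $x$ within distance $r$ of the supporting line; taking $x_0$ to be a nearest boundary point and the supporting line perpendicular to $x-x_0$ (this line supports $U$, since the largest inscribed ball centered at $x$ touches $\partial U$ at $x_0$) this reads $h(x)\le C\,\dist_2(x,\partial U)$. But the inequality $\partial_\nu h(x_0)\le\partial_\nu\psi(x_0)$ by itself only controls the normal derivative at the boundary point, and together with your interior bound for $\dist_2(x,\partial U)\ge r/2$ it leaves $\{0<\dist_2(x,\partial U)<r/2\}$ uncovered. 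To close this, apply the interior Schauder estimate on $B_{d/2}(x)$ with $d=\dist_2(x,\partial U)<r/2$: on that ball $0\le h\le \frac{3}{2}Cd$, while $|\Delta h|\le\lambda(U)\|h\|_\infty$ is bounded, so the interior gradient estimate gives $|\nabla h(x)|\lesssim d^{-1}\cdot Cd + d\cdot \lambda(U)\|h\|_\infty\le C(r,R)$ uniformly. With this extra paragraph the argument is complete and delivers the same conclusion as the paper's Green function method.
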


\begin{proofsect}{Proof (sketch)}
(1) The $C^\infty$-property of the eigenfunctions is a consequence of elliptic regularity theory; see Evans~\cite[Section~6.3]{Evans}. 

(2) This is a restatement of Corollary 8.36 of Gilbarg and Trudinger~\cite{GT1998}. 

(3) The regularity of the eigenfunction is usually tied to smoothness of~$\partial U_0$ (see, e.g., Evans~\cite[Section~6.3.2]{Evans} or~\cite{GT1998} above). Other results that work solely under convexity only address part of the boundary (Grieser and Jerison~\cite[Lemma 3.2]{grieser}). We will give a proof based on a regularity estimate for the Green function.

Let $G^U$ denote the Green function in~$U$. Then
\begin{equation}
\label{E:5.2}
h(x)=\lambda(U)\int_U G^U(x,y)h(y)\textd y,\qquad x\in U.
\end{equation} 
By Proposition~1 of Fromm~\cite{Fromm} which is itself based on Theorems~3.3 and~3.4 of Gr\"uter and Widman~\cite{GW}, there is a constant $c(R)$ depending only on the diameter of~$U$ such that
\begin{equation}
\label{E:4.10ue}
\bigl|\nabla_1G^U(x,y)\bigr|\le \frac{c(R)}{|y-x|},\qquad x,y\in U,\, x\ne y,
\end{equation}
where $\nabla_1$ denotes the gradient in the first coordinate. Next we derive a uniform bound $h(x)$. Indeed, using the Cauchy-Schwarz estimate in \eqref{E:5.2} and appling that~$\Vert h\Vert_2=1$ we get
\begin{equation}
\bigl|h(x)\bigr|\le\lambda(U)\int_U G^U(x,y)^2\textd y.
\end{equation}
Invoking the standard estimates $|G^U(x,y)|\le c'\log R$ and $\lambda(U)\le c''/r^2$, where $c',c''\in(0,\infty)$ are numerical constants, and the boundedness of~$U$ we get
\begin{equation}
\label{E:4.11ue}
\Vert h\Vert_\infty\le c \frac{(R\log R)^2}{r^2}
\end{equation}
for some absolute constant~$c\in(0,\infty)$.
Taking $\nabla_1$ on both sides of \eqref{E:5.2} and invoking \eqref{E:4.10ue}, \eqref{E:4.11ue} and the boundedness of~$U$, we get the desired uniform boundedness of $\nabla h$.
\end{proofsect}

Another fact we will need is Hadamard's variational formula for the principal eigenvalue~$\lambda(U)$. We will follow the setting of Henrot~\cite{henrot}. Let $v\colon\BR^2\to\BR^2$ be a bounded and continuous vector field and for $t\in\BR$ define $\Phi_t(x):=x+tv(x)$. Given a bounded open set $U\subset\BR^2$, let $U_t:=\Phi_t(U)$. The set $U_t$ is bounded for all~$t$; we are also going to assume that~$U_t$ is open for~$t$ small. We are interested in the dependence of $\lambda(U_t)$ on~$t$.

We will only need to examine the case when~$U$ is a convex domain. In this case, there is a well defined unit outer normal $n(x)$ at every~$x$ (just take the vector orthogonal to the right-derivative of the boundary parametrized by Euclidean arc-length). We will also need the notation $H^1_{\partial U}$ for the one-dimensional Hausdorff measure  on~$\partial U$ --- namely, the measure that assigns a connected segment $A\subset\partial U$ its Euclidean arc-length. We then have:

\begin{lemma}[Hadamard's variational formula]
Suppose~$U\in\CU$ is convex and let~$U_t$ be defined from a bounded continuous vector field $v\colon\BR^2\to\BR^2$ as above. Then
\begin{equation}
\label{eq:hadamard}
\frac{\textd}{\textd t}\lambda(U_t)\Bigl|_{t=0}=-\int_{\partial U}H^1_{\partial U}(\textd x)\,\bigl|\nabla h(x)\bigr|^2\, v(x)\cdot n(x),
\end{equation}
where $n(x)$ is the outer normal to~$\partial U$ at~$x$ and the integrand is bounded by our assumptions on~$v(x)$ and Lemma~\ref{lemma-2.14}(3).
\end{lemma}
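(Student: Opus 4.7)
The plan is to change variables via $\Phi_t$ to convert the eigenvalue problem on the moving domain $U_t$ into a $t$-dependent variational problem on the fixed domain $U$, and then apply standard Rayleigh-quotient perturbation theory. For $t$ small, $\Phi_t$ is a bi-Lipschitz homeomorphism, so $g \mapsto g\circ\Phi_t$ is a bijection between $H^1_0(U_t)$ and $H^1_0(U)$. Changing variables in the Rayleigh quotient gives
\[
\lambda(U_t) = \inf_{g \in H^1_0(U),\,g\not\equiv 0}\,\frac{\int_U (\nabla g(x))^T A_t(x)\,\nabla g(x)\,\textd x}{\int_U g(x)^2 B_t(x)\,\textd x},
\]
where $A_t(x) := |\det D\Phi_t(x)|\,(D\Phi_t(x))^{-1}(D\Phi_t(x))^{-T}$ and $B_t(x) := |\det D\Phi_t(x)|$. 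From $D\Phi_t = I + t\,Dv$ we read off $A_0 = I$, $B_0 = 1$, together with
\[
\dot A_0 = -(Dv + Dv^T) + (\text{div}\,v)\,I,\qquad \dot B_0 = \text{div}\,v.
\]

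Since $h$ is the (normalized, sign-unique) minimizer at $t=0$ and the principal eigenvalue is simple, the envelope-theorem argument -- bounding $\lambda(U_t)$ from above by inserting $h$ as a trial function and from below by inserting $h_t \circ \Phi_t$ -- yields differentiability at $t=0$ with
\[
\frac{\textd}{\textd t}\lambda(U_t)\Big|_{t=0} = \int_U (\nabla h)^T \dot A_0\,\nabla h\,\textd x - \lambda(U)\int_U h^2\,\dot B_0\,\textd x.
\]
Expanding, the integrand is $-2\sum_{i,j}\partial_i h\,\partial_j h\,\partial_i v_j + (\text{div}\,v)|\nabla h|^2 - \lambda\,h^2\,\text{div}\,v$.

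The remaining step is purely algebraic: rewrite this interior expression as a boundary integral. Integrating the first term by parts, using the identity $\sum_i \partial_i h\,\partial_i\partial_j h = \tfrac12\partial_j(|\nabla h|^2)$, the eigenvalue equation $-\Delta h = \lambda h$, and the Dirichlet condition $h = 0$ on $\partial U$ (so that $\nabla h = (\partial_n h)\,n$ there, and all boundary terms with a factor of $h$ vanish), the two interior integrals involving $\text{div}\,v$ and the integral involving $\lambda\,h^2$ cancel pairwise against the $\lambda\int h\,\nabla h\cdot v$ contribution coming from $\int h^2\,\text{div}\,v$. What survives is exactly
\[
\frac{\textd}{\textd t}\lambda(U_t)\Big|_{t=0} = -\int_{\partial U}(\partial_n h)^2\,(v\cdot n)\,H^1_{\partial U}(\textd x) = -\int_{\partial U}|\nabla h|^2\,(v\cdot n)\,H^1_{\partial U}(\textd x).
\]

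The main obstacle is the rigorous justification of these manipulations when $\partial U$ is only Lipschitz, as is generically the case for a convex planar domain with corners. My approach is to approximate $U$ from inside by a sequence of smooth strictly convex domains $U^{(m)}\uparrow U$, for each of which $h^{(m)}$ is $C^{1,\alpha}$ up to the boundary by Lemma~\ref{lemma-2.14}(2), so the formula holds by the integration by parts above. Passage to the limit $m\to\infty$ is legitimate: Lemma~\ref{lemma-3.7vv} gives Hausdorff-continuity of $\lambda$, Lemma~\ref{lemma-2.14}(3) furnishes the uniform bound $|\nabla h^{(m)}|\le C$ required to dominate the boundary integrand, and the uniform convexity of the $U^{(m)}$ identifies the limiting Hausdorff measure and outer normal with those of $U$ at $H^1$-a.e.\ boundary point (the corners form a countable, hence $H^1$-null set).
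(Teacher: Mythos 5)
The paper disposes of this lemma by citing Henrot~\cite[Theorem~2.5.1]{henrot}, so your self-contained derivation takes a genuinely different route; you are essentially reconstructing the standard proof of Hadamard's formula via pullback of the Rayleigh quotient, an envelope (Danskin-type) argument at the simple principal eigenvalue, and a Rellich-style integration by parts. Your interior algebra is correct: with $\dot A_0=(\operatorname{div}v)I-(Dv+Dv^T)$ and $\dot B_0=\operatorname{div}v$, integrating the cross term by parts, using $\sum_i\partial_i h\,\partial_i\partial_j h=\tfrac12\partial_j|\nabla h|^2$, $-\Delta h=\lambda h$, and the Dirichlet condition (so $\nabla h=(\partial_n h)n$ on $\partial U$) does collapse all bulk terms and leaves $-\int_{\partial U}|\nabla h|^2\,v\cdot n$. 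The paper's citation is shorter but opaque; your version makes the mechanism transparent and explains where the normal derivative enters.

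However, the final approximation step is where the real difficulty lives, and it is asserted rather than proved. Having established the formula for smooth convex $U^{(m)}\uparrow U$, you need to pass the $t$-derivative to the limit: pointwise convergence $\lambda(U^{(m)}_t)\to\lambda(U_t)$ together with a uniform bound $|\nabla h^{(m)}|\le C$ does \emph{not} by itself give $\frac{\textd}{\textd t}\lambda(U^{(m)}_t)\big|_{t=0}\to\frac{\textd}{\textd t}\lambda(U_t)\big|_{t=0}$. One workable route is to show that for each small $s$ the boundary integrals $-\int_{\partial U^{(m)}_s}|\nabla h^{(m)}_s|^2\,v\cdot n^{(m)}_s$ converge (this needs convergence of the boundary traces of $\nabla h^{(m)}_s$ against the weak-$*$ convergent surface measures, not just the uniform $L^\infty$ bound), are uniformly bounded in $(m,s)$, and then integrate $f_m'(s)$ over $s\in[0,t]$ and pass to the limit; alternatively, one can avoid approximation entirely by noting that convex domains enjoy global $H^2$ regularity for the Dirichlet Laplacian, under which the Rellich/Pohozaev integration by parts above is directly legitimate. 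As written, your proof names the right ingredients (Lemma~\ref{lemma-3.7vv}, Lemma~\ref{lemma-2.14}(3), negligibility of corners) but does not actually close the interchange of limit and derivative; that is the one step you would need to supply to make the argument complete.
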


\begin{proofsect}{Proof}
See, e.g., Henrot~\cite[Theorem~2.5.1]{henrot}.
\end{proofsect}

These tools permit us to address the connection between the facets and degenerate directions of the norm:

\begin{proofsect}{Proof of Theorem~\ref{prop-2.12} (facets imply degeneracy)}
Let~$U_0$ be a minimizer of~$\CF$ for a given norm~$\rho$ and let~$\gamma$ be the curve parametrizing~$\partial U_0$. Suppose that~$\partial U_0$ contains a linear segment in direction of a unit vector~$e$. The segment corresponds to the image (under~$\gamma$) of interval~$[a,b]$ of~$t$; we assume that the parametrization is such that~$t\mapsto\gamma(t)$ is linear for~$t\in[a,b]$ and $\gamma(b)-\gamma(a)$ is the Euclidean length of the segment. 

Let~$e'$ be a unit vector orthogonal to~$e$ which we will assume to be oriented as an outward normal to~$\partial U_0$ along $\gamma([a,b])$. Pick a convex~$C^1$-function~$\psi\colon[a,b]\to[0,\infty)$ with $\psi(a)=0=\psi(b)$ and define
\begin{equation}
\gamma_s(t):=\begin{cases}
\gamma(t)+s\psi(t)e',\qquad&\text{if }t\in[a,b],
\\
\gamma(t),\qquad&\text{else}.
\end{cases}
\end{equation}
The assumptions ensure that, for~$s\ge0$ small, $\gamma_s$ parametrizes  the boundary of a set~$U_s\in\CU$. Obviously, $\distH(U_s,U_0)\to0$ as~$s\downarrow0$.

We will now examine the $s$-dependence of~$\lambda(U_s)$ and~$\CP(U_s)$ for~$s\ge0$. The Hadamard formula \eqref{eq:hadamard} with $v(x)=\psi(t)e'$ whenever $x=\gamma(t)$ for $t\in[a,b]$ gives
\begin{equation}
\frac\textd{\textd s^+}\lambda(U_s)\Bigl|_{s=0}=-\int_a^b\bigl|\nabla h(\gamma(t))\bigr|^2\psi(t)\,\textd t,
\end{equation}
where~$h$ is the normalized eigenfunction of the Laplacian in~$U$.
On the other hand, since $\psi$ is smooth, the second part of Lemma~\ref{lemma-2.13} shows
\begin{equation}
\frac\textd{\textd s^+}\CP(U_s)\Bigl|_{s=0}=\,\frac{\theta^+-\theta^-}2\int_a^b\bigl|\psi^{\,\prime}(t)\bigr|\,\textd t
\end{equation}
with $\theta^\pm:=\theta^\pm(e,e')$.
Since the linear segment has the so called uniform ball property everywhere in its interior, the Hopf lemma (see, e.g., the first Lemma in Section~6.4.2 of Evans~\cite{Evans}) yields
\begin{equation}
\bigl|\nabla h(\gamma(t))\bigr|>0,\qquad t\in(a,b).
\end{equation}
Now we just note that, if $\theta^+=\theta^-$, then for $\psi$ non-negative but non-zero
\begin{equation}
\frac{\textd}{\textd s^+}\CF(U_s)\Bigl|_{s=0}<0
\end{equation}
and thus $\CF(U_s)<\CF(U_0)$ for~$s>0$ small. But this is impossible due to the minimality of~$U_0$ and so we must have $\theta^+>\theta^-$. We conclude that, indeed, a linear segment on~$\partial U$ in direction~$e$ implies that~$e$ is degenerate for~$\rho$.
\end{proofsect}

The argument for the converse is relatively similar:

\begin{proofsect}{Proof of Theorem~\ref{prop-2.12} (degeneracy implies facets)}
Let~$U_0$ denote a minimizer of~$U\mapsto\CF(U)$ for a norm~$\rho$. Suppose that~$U_0$ does \emph{not} have a facet in direction~$e$. Then one can find a line in direction~$e$ that intersects the closure of~$U_0$ at exactly one (boundary) point~$x_0$. Let $e'$ be a unit vector orthogonal to~$e$ oriented in the outward normal direction to~$\partial U_0$ at~$x_0$. Define
\begin{equation}
V_s:=\bigl\{x\in U_0\colon (x-x_0)\cdot e'<-s\bigr\},\qquad s\ge0.
\end{equation}
For~$s$ small, $V_s$ is a non-empty convex subset of~$U_0$ with $\distH(V_s,U_0)\to0$ as $s\downarrow 0$. Let~$T_s$ denote the linear segment given by the intersection of the line $(x-x_0)\cdot e'=-s$ with~$U_0$ and let $H^1_{T_s}$ denote the $1$-dimensional Hausdorff measure on~$T_s$. As before, the argument will need a careful examination of the dependence of~$\lambda(V_s)$ and~$\CP(V_s)$ on~$s$.

For the perimeter, \eqref{E:2.40a} shows
\begin{equation}
\label{E:2.54}
\CP(U_0)\ge\CP(V_s)+\frac{\theta^+-\theta^-}2\,s,
\end{equation}
where~$\theta^\pm:=\theta^\pm(e,e')$ are as defined above. On the other hand by integrating \eqref{eq:hadamard} we get 
\begin{equation}
\label{E:2.55}
\lambda(V_s)=\lambda(U_0)+\int_0^s\textd u\int_{T_u}H^1_{T_u}(\textd x)\,\bigl|\nabla h_u(x)\bigr|^2
\end{equation}
where~$h_s$ denotes the principal eigenfunction of the Laplacian in~$U_s$. But Lemma~\ref{lemma-2.14}(3) ensures that $\nabla h_s$ is bounded uniformly in~$s$ sufficiently small and so
\begin{equation}
\int_0^s\textd u\int_{T_u}H^1_{T_u}(\textd x)\,\bigl|\nabla h_u(x)\bigr|^2
\le C|U_0\smallsetminus V_s|
\end{equation}
for some constant~$C\in(0,\infty)$.

The assumption that~$U_0$ has no facet in direction~$e$ means that $|T_s|\to0$ as $s\downarrow0$. Considering the rectangle $R_s$ containing $U_0\smallsetminus V_s$ with one side at~$T_s$ and the other side of length~$s$, the inequality $|U_0\smallsetminus V_s|\le |R_s|=s|T_s|$ yields
\begin{equation}
\label{eq:voldiff}
\lim_{s\downarrow0}\frac{|U_0\smallsetminus V_s|}s = 0.
\end{equation}
This implies $\lambda(V_s)\le\lambda(U_0)+o(s)$ as~$s\downarrow0$ and so, in light of \eqref{E:2.54}, $\CF(V_s)\ge\CF(U_0)$ forces $\theta^+=\theta^-$, i.e., $e$ is \emph{not} a degenerate direction.
\end{proofsect}

\subsection{Corners}
We will now also address the appearance of corners.

\begin{proofsect}{Proof of Theorem~\ref{prop-2.16} (corners imply non-convexity)}
Suppose~$U_0$ has a corner at~$x_0$ and let $v^\pm:=v^\pm(x_0)$. If~$\gamma$ is the parametrization of~$\partial U_0$ using the Euclidean arc-length and $t_0$ is such that $\gamma(t_0)=x_0$, then we have
\begin{equation}
\label{E:2.62}
\gamma(t)=\begin{cases}
x_0+(t-t_0)\nu^-+o(t-t_0),\qquad&\text{if }t<t_0,
\\
x_0+(t-t_0)\nu^++o(t-t_0),\qquad&\text{if }t>t_0.
\end{cases}
\end{equation}
Pick $a,b>0$ and $s>0$ and consider the curve $\xi_s$ defined by
\begin{equation}
\label{E:4.21u}
\xi_s(t):=
\begin{cases}
\frac{t_0-t+bs}{s(a+b)}\gamma(t_0-as)+\frac{t-t_0+as}{s(a+b)}\gamma(t_0+bs),\qquad&\text{if }-as\le t-t_0\le bs,
\\
\gamma(t),\qquad&\text{else}.
\end{cases}
\end{equation}
For~$s$ small non-negative, $\xi_s$ is the boundary of a convex set~$V_s\subset U_0$. We will again need to quantify the $s$-dependence of the perimeter and eigenvalue of~$V_s$.

Let $T_s$ denote the part of the boundary of~$V_s$ corresponding to the first line in \eqref{E:4.21u}. For the eigenvalue we again get by \eqref{eq:hadamard},
\begin{equation}
\lambda(V_s)=\lambda(U_0)+\int_0^s\textd u\int_{T_u}H^1_{T_u}(\textd x)\,\bigl|\nabla h_u(x)\bigr|^2
\end{equation}
and so, by Lemma~\ref{lemma-2.14}(3), $\lambda(V_s)\le\lambda(U_0)+C|U_0\smallsetminus V_s|$. Since $U_0$ has a (convex) corner at~$x_0$, we have $|U_0\smallsetminus V_s|=O(s^2)$ and so
\begin{equation}
\lambda(V_s)\le\lambda(U_0)+O(s^2).
\end{equation}
For the perimeter, \eqref{E:2.62} and continuity of the norms show
\begin{equation}
\CP(V_s)=\CP(U_0)+s\Bigl(\rho(a v^++bv^-)-a\rho(v^+)-b\rho(v^-)+o(1)\Bigr).
\end{equation}
Taking~$s\downarrow0$, the fact that~$U_0$ is the minimizer thus implies
\begin{equation}
\label{E:4.26uu}
\rho(a v^++bv^-)\ge a\rho(v^+)+b\rho(v^-).
\end{equation}
Equality then holds thanks to the triangle inequality.
\end{proofsect}

\begin{proofsect}{Proof of Theorem~\ref{prop-2.16} (non-convexity implies corners)} 
Assume now that $\nu^+$ and~$\nu^-$ are two distinct unit vectors in~$\BR^2$ for which equality holds in \eqref{E:4.26uu} for all $a,b\ge0$. As can be checked, these vectors cannot be collinear. Reversing the orientations if necessary, we may assume that $\nu^+$ is rotated counterclockwise relative to~$\nu^-$ by angle less than~$\pi$.

Let now~$U_0$ denote a minimizing shape of~$\CF$ and consider tangent lines to~$\partial U_0$ in directions $\nu^-$ and~$\nu^+$. These lines touch~$\partial U_0$ at points~$x_1$ and~$x_2$, respectively, and they intersect at a point~$x_0$ (see Figure \ref{fig2}). Now consider the domain~$U$ which is obtained by replacing the portion of~$\partial U_0$ between~$x_1$ and~$x_2$ (if run around counterclockwise) by the line segments $[x_1,x_0]$ and~$[x_0,x_2]$.

\begin{figure}[t]
\refstepcounter{obrazek}
\label{fig2}
\vspace{.2in}
\includegraphics[width=4in]{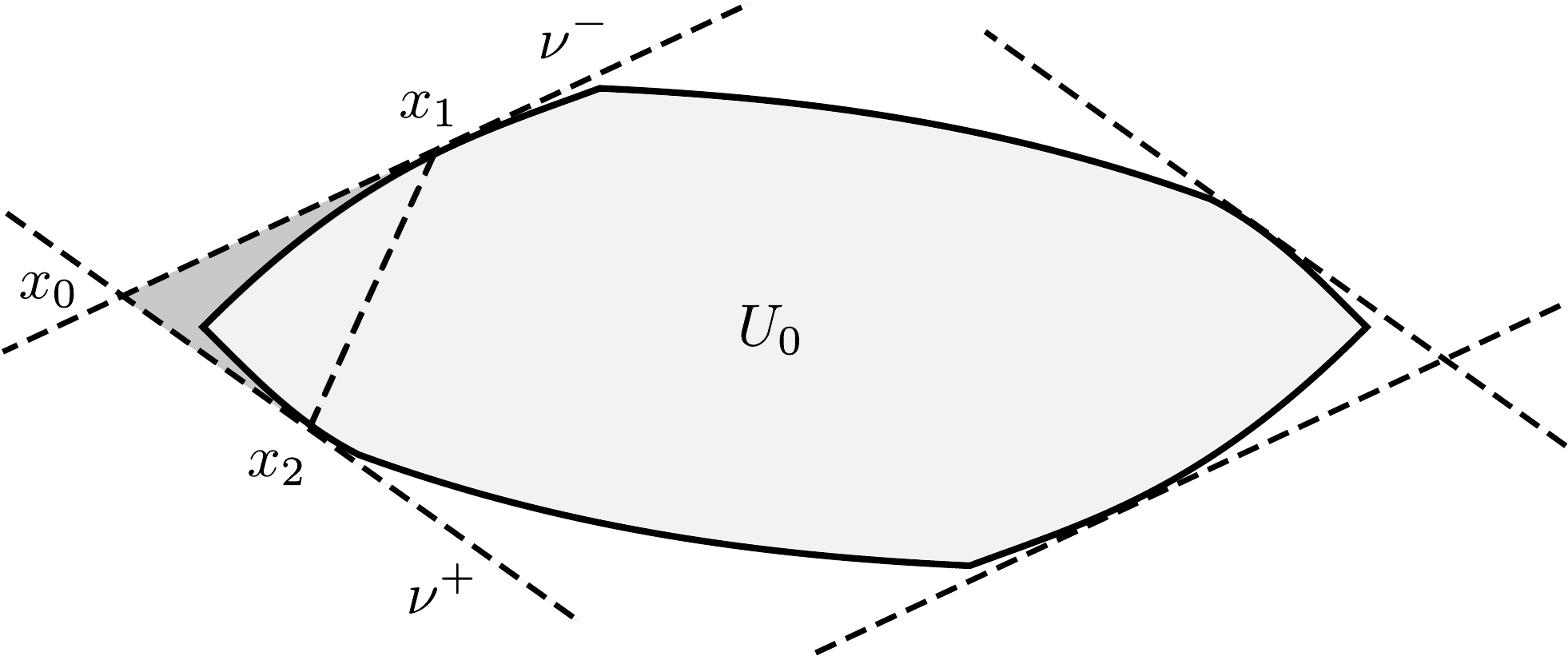}
\vspace{.2in}
\bigskip
\begin{quote}
\fontsize{9}{5}\selectfont
{\sc Fig.~\theobrazek.\ } The construction of $U$ in the second part of the proof of Theorem~\ref{prop-2.16}. The underlying set is necessarily symmetric with respect to $x\mapsto -x$, so if a corner can be added on one side of the set, a symmetric corner would appear on the other side as well.
\normalsize
\end{quote}
\end{figure}
The construction ensures that~$U$ is a bounded, open, convex set with a corner at~$x_0$ where the tangent vectors are equal~$\nu^-$ and~$\nu^+$, respectively. If~$x_0\in\partial U_0$, then $U=U_0$ and we would be done so let us assume that $x_0\not\in\partial U_0$ and derive a contradiction.

Clearly, $U\supseteq U_0$ and since~$U$ is convex while $U_0\in\CU$ and $U\ne U_0$, we have~$\lambda(U)<\lambda(U_0)$ by Lemma~\ref{lemma-hull}. The triangle inequality implies that the $\rho$-length of the portion of~$\partial U$ between~$x_1$ and~$x_2$ is at least~$\rho(x_2-x_1)$. But
\begin{equation}
x_2-x_1 = x_2-x_0+(x_0-x_1)
\end{equation}
and so, by the equality in \eqref{E:4.26uu},
\begin{equation}
\rho(x_2-x_1) = \rho(x_2-x_0)+\rho(x_0-x_1).
\end{equation}
It follows that~$\CP(U)\le\CP(U_0)$ and thus $\CF(U)<\CF(U_0)$, in contradiction with the minimality of~$U_0$. Hence,~$U_0$ has a corner at~$x_0$ after all.
\end{proofsect}

\subsection{Non isoperimetric minimizers}
\label{sec-last}\noindent
We end this section by analyzing the situations for which the minimizer of $\CF$ is not homothetic to the   minimizer of the corresponding isoperimetric problem~\eqref{E:1.7w}.

\begin{proofsect}{Proof of Theorem~\ref{prop-1.2} (existence of a counterexample)}
Not too surprisingly, a counterexample is provided by an asymmetric $\ell^1$-norm. Fix $n\ge1$ and, for $x=(x_1,x_2)$ let
\begin{equation}
\rho(x):=\frac{1}{n}|x_1|+n|x_2|.
\end{equation}
Among sets of unit Lebesgue area, the perimeter is minimized by the rectangle $[0,n]\times[0,1/n]$. Next pick $a>0$ and let denote $U_n^a:=[0,n/a]\times[0,a/n]$. Using separation of variables (see, e.g., Henrot~\cite[Proposition 1.2.13]{henrot}) one finds $\lambda(U_n^a)=\pi^2\left(\frac{a^2}{n^2}+\frac{n^2}{a^2}\right)$. Thus
$$
\CF(tU_n^a)=t^{-2}\pi^2\left(\frac{a^2}{n^2}+\frac{n^2}{a^2}\right)+2t\left(\frac{1}{a}+a\right)
.$$
Abbreviating $A:= \pi^2(\frac{a^2}{n^2}+\frac{n^2}{a^2})$ and $B:=\frac{1}{a}+a$, we then get
$$
\min_{t>0}\CF(tU_n^a)=3A^{1/3}B^{2/3}
.$$
Since $AB^2\sim\pi^2n^2(1+a^{-2})^2$ in the limit~$n\to\infty$, taking $a>1$ will result in a smaller value of $\min_t\CF(U_n^a)$ than~$a:=1$, provided~$n$ is large enough. But for~$a=1$ the isoperimetric minimizer~$U_n^1$ is the minimizer of~$U\mapsto\CF(U)$, while $U_n^a$ is homothetic to~$U_n^{a'}$ if and only if~$a=a'$. Hence, the isoperimetric minimizer is not homothetic to the minimizer of $\CF$ once~$n$ is large enough.
\end{proofsect}

\begin{proofsect}{Proof of Theorem~\ref{prop-1.2} (smooth shapes)}
Let $h$ denote the $L^2$-normalized eigenfunction corresponding to the principal eigenvalue of the (negative) Laplacian in~$U_0$ with Dirichlet boundary conditions on~$\partial U_0$.
The assumptions on~$\rho$ and~$U_0$ permit us to take variational derivative (see \cite[Theorem 2.2]{BBH2011} and \cite{Giusti,Variation}) and find out that we must have
\begin{equation}
\label{E:1.20c}
\bigl|\nabla h(x)\bigr|^2=C_\rho(x),\qquad x\in\partial U_0,
\end{equation}
where~$C_\rho(x)$ is the curvature of~$\partial U_0$ at point~$x$ relative to arc-length defined by the norm~$\rho$ and where $\nabla h$ extends continuously to~$\partial U_0$ by Lemma~\ref{lemma-2.14}(2). The corresponding variational derivative of the Lebesgue area of~$U_0$ is a constant and so if~$U_0$ is, even just up to a scaling, also a minimizer of the isoperimetric problem, a similar reasoning yields
\begin{equation}
C_\rho(x)=\text{constant},\qquad x\in\partial U_0.
\end{equation}
But this means that $|\nabla h|$ is constant everywhere on~$\partial U_0$. Since~$U_0$ has $C^1$-boundary the gradient~$\nabla h(x)$ is collinear with the (everywhere-defined) outer normal vector $\hat n = \hat n(x)$. It follows that~$h$ satisfies the overdetermined elliptic problem
\begin{equation}
\begin{alignedat}{3}
-\Delta h &=\lambda(U_0)h,\qquad&&\text{in }U_0,
\\
h&=0,\qquad&&\text{on }\partial U_0,\\
\nabla h&=c \,\hat n,\qquad&&\text{on }\partial U_0,
\end{alignedat}
\end{equation}
for some constant~$c<0$. A classic result due to Serrin~\cite{Serrin1971} states that (for~$C^2$-domains) this is only possible if~$U_0$ is a Euclidean ball. By the duality characterization of the isoperimetric minimizers, $\rho$ must be a multiple of the Euclidean norm.
\end{proofsect}

\section*{Acknowledgments}
\noindent
This research has been partially supported by NSF grant DMS-1407558 and GA\v CR project P201/16-15238S. The authors wish to thank John Garnett, Inwon Kim and Peter Kuchment for discussions at various stages of this project.

\bibliographystyle{abbrv}

\end{document}